\newif\ifdebug
\newcommand{\printname}[1]
   {\smash{\makebox[0pt]{\hspace{-1.0in}\raisebox{8pt}{\tiny #1}}}}
\newcommand{\labell}[1] {\ifdebug {\label{#1}\printname{#1}} 
\else {\label{#1}} \fi}
\def \on {\operatorname}
\def \Id {{\operatorname{Id}}}
\def \std {{\operatorname{std}}}
\def \cut {\operatorname{cut}}
\def \Pcut {P_{\cut}}
\numberwithin{equation}{section}
\newtheorem{theorem}[equation]{Theorem}
\newtheorem{lemma}[equation]{Lemma}
\newtheorem{proposition}[equation]{Proposition}
\newtheorem{corollary}[equation]{Corollary}
\newtheorem*{corollary*}{Corollary}
\theoremstyle{definition}
\newtheorem{definition}[equation]{Definition}
\newtheorem{example}[equation]{Example}
\newtheorem*{claim*}{Claim}
\theoremstyle{remark}
\newtheorem{remark}[equation]{Remark}
\newtheorem{construction}[equation]{Construction}
\def\eor{\unskip\ \hglue0mm\hfill$\diamond$\smallskip\goodbreak}
\def\eoe{\unskip\ \hglue0mm\hfill$\between$\smallskip\goodbreak}
\def\eod{\unskip\ \hglue0mm\hfill$\diamond$\smallskip\goodbreak}
\def\eoc{\unskip\ \hglue0mm\hfill$\diamond$\smallskip\goodbreak}
\setlist{topsep=0pt,itemsep=6pt,parsep=3pt}
\newcommand{\C}{{\mathbb{C}}}
\newcommand{\Z}{{\mathbb{Z}}}
\newcommand{\R}{{\mathbb{R}}}
\newcommand{\N}{{\mathbb{N}}}
\newcommand{\T}{{\mathbb{T}}}
\def \frakP {{\mathfrak P}}
\def \frakM {{\mathfrak M}}
\def \frakQ {{\mathfrak Q}}
\def \frakA {{\mathfrak A}}
\def \frakB {{\mathfrak B}}
\def \Cx {\C^{\times}}
\def \calO {{\mathcal O}}
\newcommand{\algt}{\mathfrak{t}}
\DeclareMathOperator{\primitive}{primitive}
\def \pre {^{-1}}
\DeclareMathOperator{\free}{free}
\def \Mfree {M_{\free}}
\def \invt {{\operatorname{iso}}}
\def \iso{{\operatorname{iso}}}
\def \loc-iso{{\operatorname{loc-iso}}}
\def \tphi {{\widetilde{\phi}}}
\def \tpsi {{\widetilde{\psi}}}
\def \calF {{\mathcal F}}
\def \intQ {\mathring{Q}}
\def \ol {\overline}
\def \thetabar {\ol{\theta}}
\def \tZ {\algt_{\Z}}
\def \tZhat {\hat{\algt}_{\Z}}
\def \etahat {{\hat{\eta}}}
\def \lambdahat {\hat{\lambda}}
\def \Rplus {\R_{\geq 0}}
\def \Rpos {\R_{>0}}
\def \del {\partial}
\def \del {\partial}
\def \ssminus {\setminus}
\def \actsonup {\  \rotatebox[origin=c]{0}{$\circlearrowright$}\  }
\begin{document}

\title{Classification of locally standard torus actions 
}
\date{\today}

\author[Y.\ Karshon]{Yael Karshon}
\email{yaelkarshon@tauex.tau.ac.il}
\address{Tel Aviv University, and the University of Toronto}
\author[S.\ Kuroki]{Shintar\^o Kuroki}
\email{kuroki@ous.ac.jp}
\address{Okayama University of Science}



\subjclass[2020]{Principal: 57S12, 57S25}


\begin{abstract}\ 
An action of a torus $T$ on a manifold $M$ is \emph{locally standard} 
if, at each point, the stabilizer is a sub-torus 
and the non-zero isotropy weights are a basis to its weight lattice.
The quotient $M/T$ is then a manifold-with-corners,
decorated by a so-called unimodular labelling, 
which keeps track of the isotropy representations in $M$,
and by a degree two cohomology class with coefficients in the integral lattice
$\algt_\Z$ of the Lie algebra of~$T$, 
which encodes the ``twistedness'' of $M$ over $M/T$.
We classify locally standard smooth actions of~$T$,
up to equivariant diffeomorphism,
in terms of triples $(Q,\lambdahat,c)$, 
where $Q$ is a manifold-with-corners,
$\lambdahat$ is a unimodular labelling, 
and $c$ is a degree two cohomology class with coefficients in $\algt_\Z$.
\end{abstract}

\maketitle
\setcounter{tocdepth}{1}
\tableofcontents

\section{Introduction}

\labell{sec:intro}

In this paper, given a torus $T$, 
we classify locally standard $T$-manifolds up to equivariant diffeomorphism.
Along the way, we obtain an equivalence of categories
between locally standard $T$-manifolds and certain principle $T$-bundles.
In this section we give a sketch.
For detailed definitions and statements, see Section \ref{sec:defs}.

A \emph{locally standard $T$-manifold} 
is a (smooth) manifold $M$, equipped with a (smooth) $T$-action,
such that, at each point, the stabilizer is a sub-torus
and the non-zero isotropy weights are a basis to its weight lattice.
Equivalently, $M$ is locally equivariantly diffeomorphic
to $\C^n \times \T^l \times \R^m$ 
where $T$ acts through an isomorphism with $\T^n \times \T^l$.
Special cases include principal $T$-bundles and locally toric $T$-manifolds.
See \Cref{def:locally standard}, \Cref{def:Tchart}
and \Cref{locally standard criterion}, 
and \Cref{locally standard examples}.

The invariants of a locally standard $T$-manifold $M$ are obtained from the 
following three facts.
See \Cref{l:quotient}, \Cref{def:unimodular} and \Cref{l:labelling}, 
and \Cref{l:cM}.

\begin{itemize}[itemsep=6pt]
\item
The quotient $M/T$ is a manifold-with-corners $Q$.
\item
Labelling each point of $Q$ of depth one by the stabilizer of its pre-image
defines a \emph{unimodular labelling} on $Q$.
\item
The quotient map $M \to Q$ restricts
to a principal $T$-bundle over the interior of $Q$;
the \emph{Chern class} of the $T$-manifold $M$
classifies this principal bundle.
\end{itemize}

The classification is then achieved in the following theorem.

\begin{theorem} \labell{thm:main0}
\begin{itemize}[itemsep=6pt]
\item[\scalebox{0.4}{$\blacksquare$}]
A $T$-equivariant diffeomorphism of locally standard $T$-manifolds
descends to a diffeomorphism of their quotients that intertwines
their unimodular labellings and their Chern classes.
\item[\scalebox{0.4}{$\blacksquare$}]
Every such diffeomorphism between such quotients
comes from a $T$-equivariant diffeomorphism
of the locally standard $T$-manifolds.
\item[\scalebox{0.4}{$\blacksquare$}]
Up to isomorphism,
every manifold-with-corners $Q$ that is equipped with a unimodular labelling
and with a class in $H^2(Q;\tZ)$ 
is obtained in this way from a locally standard $T$-manifold.
\end{itemize}
\end{theorem}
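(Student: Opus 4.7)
The plan is to dispatch the three bullets in order; the third (realization) carries the substantive work.

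\emph{First bullet.} A $T$-equivariant diffeomorphism $F\colon M\to M'$ carries $T$-orbits to $T$-orbits and descends to a continuous bijection $\bar F\colon Q\to Q'$; reading $\bar F$ through the local standard charts of \Cref{locally standard criterion} makes it a diffeomorphism of manifolds-with-corners. Since $F$ preserves stabilizers, it intertwines the unimodular labellings. Its restriction to the preimage of $\intQ$ is a morphism of principal $T$-bundles, hence pulls back the Chern class on $\intQ$; because corner neighborhoods are collared, $\intQ \hookrightarrow Q$ is a homotopy equivalence, so the identification of classes extends to $H^2(Q;\tZ)$.

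\emph{Second bullet.} Given $\bar F\colon Q\to Q'$ intertwining the labellings and Chern classes, I would construct the lift in two stages. Over the interior, since the Chern classes of $M|_{\intQ}$ and $\bar F^*(M'|_{\intQ'})$ agree, these principal $T$-bundles are $T$-equivariantly isomorphic; pick such an isomorphism $F_0$. Near a boundary point, the local standard models for $M$ and $M'$ agree because $\bar F$ preserves the labelling, so $F_0$ extends across the collapsed fibers up to a $T$-valued gauge. The technical step is patching the local extensions on overlaps: the obstruction lies in a sheaf cohomology group that maps to $H^2(Q;\tZ)$ as the difference of the Chern classes, and hence vanishes by hypothesis.

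\emph{Third bullet.} Given $(Q,\lambdahat,c)$, first pick a principal $T$-bundle $\pi\colon P\to Q$ with Chern class $c$, using that $H^2(Q;\tZ)$ classifies such bundles over the paracompact manifold-with-corners $Q$. Then define
\[
M := P/\!\sim,\qquad p\sim p' \iff \pi(p)=\pi(p')=q \text{ and } p'=p\cdot t \text{ for some } t\in T_q,
\]
where $T_q\subseteq T$ is the subtorus assigned by $\lambdahat$ at $q$ (spanned by the labels of the faces through $q$). The $T$-action on $P$ descends to $M$ because $T$ is abelian, and the orbit map $M\to Q$ is induced by $\pi$. To give $M$ a smooth structure making $T$ act locally standardly, I would work in a corner chart of $Q$ on which \emph{unimodularity} of $\lambdahat$ lets me normalize the $k$ local labels to the first $k$ standard basis vectors of $\tZ$; trivializing $P$ over this chart identifies $M$ locally with $\C^k\times\T^{\dim T-k}\times\R^m$ in the standard way. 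Patching these smooth local models globalizes, and tracing through the construction recovers $Q$, $\lambdahat$, and $c$.

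The main obstacle will be the smoothness of the quotient $M=P/\!\sim$ along the collapsing locus: unimodularity is exactly the hypothesis that makes the local quotient a manifold of the standard form. Once the local charts $\C^k\times\T^{\dim T-k}\times\R^m$ are in hand, the first two bullets follow by naturality of the construction together with the classification of principal $T$-bundles.
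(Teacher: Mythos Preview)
Your first bullet and your third-bullet construction $M=P/\!\sim$ align with the paper: this is exactly the paper's \emph{cutting} construction (\Cref{topological cut space}), and you correctly flag compatibility of the local $\C^k\times\T^{d-k}\times\R^m$ charts as the main obstacle. The paper's resolution is \Cref{descend diffeo from P to M}, which shows that an equivariant diffeomorphism of trivialized models $\calO\times\T^d\to\calO'\times\T^d$ descends to a \emph{smooth} map of the cut models $\Omega\to\Omega'$; the key step uses Hadamard's lemma to factor $s'_j(s,x)=s_j h_j(s,x)$ with $h_j>0$, so that $\sqrt{s'_j}\,a'_j = z_j\sqrt{h_j(s,x)}\cdot(\text{smooth})$ becomes smooth in the $\Omega$-coordinates despite the square root in $c^{\std}$.

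Your second bullet takes a genuinely different route from the paper, and as written it has a gap. You fix an isomorphism $F_0$ over $\intQ$ and then hope to ``extend across the collapsed fibers up to a gauge''---but an arbitrary bundle isomorphism over the interior need not extend to a smooth equivariant map of the locally standard manifolds (e.g.\ $z\mapsto e^{i/|z|^2}z$ is an $S^1$-equivariant diffeomorphism of $\C^\times$ covering the identity on $\Rpos$, yet does not extend smoothly to~$\C$), so $F_0$ cannot serve as the scaffold. A repairable version of your idea would abandon $F_0$ and instead choose local equivariant diffeomorphisms over a cover of all of~$Q$, then analyze the resulting gauge cocycle in $H^1(Q;C^\infty(\,\cdot\,,T))\cong H^2(Q;\tZ)$; but you would still have to identify that class with $c_M-\bar F^*c_{M'}$, which is not automatic. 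The paper sidesteps all of this: from $M$ it builds, via the \emph{simultaneous toric radial blowup} (\Cref{c:simultaneous}), a genuine principal $T$-bundle $P_M\to Q$ over \emph{all} of~$Q$ with Chern class~$c_M$, proves that cutting and blowup are inverse equivalences $\frakP_\invt\leftrightarrow\frakM_\invt$ (\Cref{natural isos}), and then lifts $\bar F$ to $P_M\to P_{M'}$ by the ordinary classification of principal bundles over manifolds-with-corners before cutting back down. No extension-from-the-interior step is ever needed.
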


\begin{proof}[Pointer to proof]
The three items are detailed in Parts \eqref{main:descends},
\eqref{main:lifts}, and \eqref{main:exists} of \Cref{thm:main},
respectively.
The statement of \Cref{thm:main}
is followed by a proof of Part~\eqref{main:descends},
and by pointers to proofs  of Parts~\eqref{main:lifts} and \eqref{main:exists}, 
which occupy the rest of this paper.
\end{proof}

The first item of \Cref{thm:main0}
exhibits the unimodular labelling and the Chern class
as \emph{invariants} of a locally standard $T$-manifold $M$.
The second item gives the \emph{uniqueness} part of the classification:
these invariants are complete,
namely, they determine $M$ up to equivariant diffeomorphism.
The third item gives the \emph{existence} part of the classification:
any value of the invariants can arise from a locally standard $T$-manifold.

According to \Cref{thm:main0}, the map $M \mapsto M/T$
gives a full and essentially surjective
functor $\frakM_\invt \to \frakQ_\invt$ 
from the category $\frakM_\invt$ of locally standard $T$-manifolds
and their equivariant diffeomorphisms
to the category $\frakQ_\invt$ of manifolds-with-corners $Q$,
decorated by a unimodular labelling $\lambdahat$ 
and with a cohomology class in $H^2(Q;\tZ)$, 
and their decoration-preserving diffeomorphisms.
See \Cref{def:functor} and~\Cref{rk:functor}.

Our classification passes through an equivalence of categories,
from the category $\frakP_\invt$ of principal $T$-bundles 
over decorated manifolds-with-corners
with their decoration-preserving equivariant diffeomorphisms,
to the category $\frakM_\invt$ of locally standard $T$-manifolds
with their equivariant diffeomorphisms.
See \Cref{def:bundle quotient functor} and \Cref{equiv of cat}.
We expect to be able to extend these results to morphisms
that are not necessarily diffeomorphisms; see \Cref{rk:transverse maps}.

We borrow ideas from Karshon-Lerman's classification \cite{KL} 
of not-necessarily-compact symplectic toric manifolds,
which, in turn, borrows ideas from Lerman-Tolman's classification \cite{LT} 
of (effective) symplectic toric orbifolds;
both of these are inspired by Haefliger-Salem's paper \cite{HS}.

In the literature, ``locally standard $T$-action'' often refers 
to a $T$-action that is locally equivariantly diffeomorphic
to $\C^n$, where $T$ acts through an isomorphism
with the standard torus $\T^n$, acting on $\C^n$ by coordinatewise rotations.
This more restrictive condition appeared back in 
\cite[Def.~5.10 and \S 6]{karshon-grossberg} under the name ``locally toric''.
The more general notion of ``locally standard $T$-action'' that we use
was introduced by Wiemeler \cite{Wi:classification}.

Our notion of a ``unimodular labelling'',
by which we ``decorate'' $M/T$,
is equivalent to the toric topology notion of a 
``characteristic function'' \cite{DJ}.
For locally standard $T$-manifolds $M$
whose quotient map $M \to M/T$ admits a continuous section, 
Wiemeler \cite{Wi:classification} shows that 
the ``decorated'' manifold-with-corners $M/T$
determines $M$ up to equivariant diffeomorphism.
This gives the ``uniqueness'' part of the classification.
Our work extends Wiemeler's in several ways:
we do not assume the existence of a section, 
we prove the ``existence'' part of the classification,
and we obtain functors between the relevant categories. 
For more details, see \Cref{rk:wiemeler-details}.
Our work also confirms that Wiemeler's definition of a \emph{regular section},
on which his work relies, is well-posed.  See \Cref{rk:wiemeler}.

Wiemeler's approach can be taken further:
it is possible to classify locally standard $T$ manifolds $M$
that do not admit a section $M/T \to M$
by keeping track of their \emph{local} regular sections.
With this approach, rather than relying 
on the classification of principal $T$-bundles 
through their Chern classes as we do in this paper,
one repeats the steps of the proof of the classification
of principal $T$-bundles while adapting them
to the more general case of locally standard $T$-manifolds.

We now give an outline of this paper.

In \Cref{sec:example}, to illustrate a usage of 
our general classification theorem, we use it to explicitly classify 
locally standard $T$ manifolds $M$ whose quotient $M/T$ is one dimensional.

In \Cref{sec:defs},
we give precise definitions and statements,
leading to a category-theoretical formulation of our main theorem.
In \Cref{sec:differential spaces}, we give background
on differential spaces in the sense of Sikorski.
In \Cref{sec:functions descend}, we show that an equivariant diffeomorphism 
between models for locally standard $T$-manifolds
descends to a diffeomorphisms of manifolds-with-corners.
In \Cref{sec:invariants}, we prove the lemmas from \Cref{sec:defs}
that we needed for stating our main theorem.

In \Cref{sec:proof modulo cutting}, 
we present the category theoretical arguments that will imply 
our classification, assuming the existence and properties
of two functors:
the \emph{cutting} functor\footnote{Adapted from Lerman's
\emph{symplectic cutting} \cite{L}.},
which takes principal $T$-bundles over ``decorated''
manifolds-with-corners to locally standard $T$-manifolds,   
and the \emph{simultaneous toric radial blowup} functor,
which goes the other way.
In Sections \ref{sec:cut-1}--\ref{sec:cutting}
and \ref{sec:blowups}--\ref{sec:blowup smoothly}
we establish the existence and properties of these two functors.

Finally, in \Cref{sec:literature}, 
we comment on other related results in the literature.

\subsection*{Acknowledgements}
We are grateful to Michael Wiemeler, Michael Davis,
and Soumen Sarkar, for helpful discussions. 

This research was partially funded 
by the JSPS Strategic Young Researcher Overseas Visits Program 
for Accelerating Brain Circulation
``Deepening and Evolution of Mathematics and Physics, 
building of International Network Hub based on OCAMI";
by JSPS KAKENHI Grant Numbers 15K17531, 17K14196 and 21K03262;
and by the Natural Sciences and Engineering Research Council of Canada.

\section{Examples}
\labell{sec:example}

\subsection{Principal bundles}

Every principal $T$-bundle $\Pi \colon M \to B$ 
is a locally standard $T$-manifold.
A locally standard $T$-manifold $M$ is a principal $T$-bundle
if and only if its quotient $M/T$ is a manifold (without boundary).
In this case, the unimodular labelling is empty,
and we obtain the usual classification of principal $T$-bundles
by their Chern classes.

\subsection{One dimensional quotients}

Let $\T^d := (S^1)^d$,
and let $e_1,\ldots,e_d$ be the standard generators of its weight lattice.

We have the following examples of locally standard $\T^d$-manifolds
with one dimensional quotients.
In all these examples, because the quotient $M/T$ is one dimensional, 
the Chern class $c$ is automatically trivial.

\begin{enumerate}

\item
Take the action of the torus $T=\T^d$ on the manifold $M=S^1 \times \T^d$
by multiplication on the second factor.
Then $M/T \cong S^1$, and the unimodular labelling is empty.

\item
Take the action of the torus $T=\T^d$ on the manifold $M=\R \times \T^d$ 
by multiplication on the second factor.
Then $M/T \cong \R$, and the unimodular labelling is empty.

\item
Take the action of the torus $T=S^1 \times \T^{d-1} \cong \T^d$
on the manifold $M=\C \times \T^{d-1}$ by
$$ (t_1,\mathbf{t}) \colon (z,\mathbf{s}) \mapsto (t_1z,\mathbf{ts}).$$
Then $M/T \cong [0,\infty)$,
and the unimodular labelling takes its boundary point to $\pm e_1$.

\item
Consider the sphere $S^2 \subset \C \times \R$.
Take the action of the torus $T=S^1 \times \T^{d-1} \cong \T^d$
on the manifold $M=S^2 \times \T^{d-1}$ by
$$ (t,\mathbf{t}) \colon ((z,h),\mathbf{s}) \mapsto ((tz,h),\mathbf{ts}).$$
Then $M/T \cong [-1,1]$, and the unimodular labelling takes 
both of its boundary points to $\pm e_1$.

\item
Consider the sphere $S^3 \subset \C^2$.
Consider the action of the torus $T=\T^2 \times \T^{d-2} \cong \T^d$
on the manifold $M=S^3 \times \T^{d-2}$ by
$$ ((t_1,t_2),\mathbf{t}) \colon ((z_1,z_2),\mathbf{s}) 
   \mapsto ((t_1z_1,t_2z_2),\mathbf{ts}).$$
Then $M/T \cong [-1,1]$ and the unimodular labelling 
takes its boundary points to $\pm e_1$ and $\pm e_2$.

\item
Let $k$ be an integer $\geq 2$. 
Let $w \in \{1,\ldots,k\}$.  Suppose that $\gcd(w,k) = 1$.
The group $C_k$ of $k$th roots of unity acts on the sphere $S^3 \subset \C^2$ 
by 
$$ \xi \colon (z_1,z_2) \mapsto (\xi^w z_1, \xi z_2).$$
The \emph{lens space} $L(k;w)$ is the quotient $S^3/C_k$.
Take the action of $\T^2 \times \T^{d-2} \cong \T^d$ on the manifold
$$ L(k;w) \times \T^{d-2} $$
by
$$ ((\tau_1,\tau_2),\mathbf{t}) \colon ( [z_1,z_2], \mathbf{s} ) \mapsto 
    ( [t_1 z_1,t_2 z_2], \mathbf{ts} )$$
where $t_2^k = \tau_2$ and $t_1 = \tau_1 t_2^w$.
Then $M/T \cong [-1,1]$, and the unimodular labelling
takes one boundary point to $\pm e_1$ and the other to 
$\pm (-we_1 + ke_2)$. 
\end{enumerate}

\begin{theorem}
Let $T$ be a torus, let $M$ be a connected locally standard $T$-manifold, 
and assume that $M/T$ is one dimensional.  Then there exists 
an isomorphism $\rho \colon T \cong \T^d$ and a $\rho$-equivariant
diffeomorphism from $M$ to exactly one of the above $\T^d$-manifolds.
\end{theorem}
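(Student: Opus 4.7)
The plan is to apply \Cref{thm:main0} directly. Since $\dim Q = \dim(M/T) = 1$, we have $H^2(Q;\tZ) = 0$, so the Chern class $c$ is automatically trivial, as already noted in the preamble to the examples. Classification of $M$ up to equivariant diffeomorphism therefore reduces to classification of pairs $(Q,\lambdahat)$, with $Q$ a connected one-dimensional manifold-with-corners and $\lambdahat$ a unimodular labelling, up to diffeomorphism of $Q$ intertwining the labellings and combined with an automorphism $\rho$ of $T$.

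Next I would enumerate the connected one-dimensional manifolds-with-corners: up to diffeomorphism they are $S^1$, $\R$, $[0,\infty)$, and $[-1,1]$. In dimension one, a unimodular labelling is simply an assignment, to each boundary point, of a rank-one subtorus of $T$, equivalently a primitive element of $\tZ$ defined up to sign; the unimodularity condition at higher-codimension strata is vacuous because no such strata exist. If $Q$ is $S^1$ or $\R$ the labelling is empty and any $\rho$ matches $M$ with example~(1) or (2), respectively. If $Q = [0,\infty)$ we choose $\rho$ to send the single label to $\pm e_1$, matching~(3).

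The interesting case is $Q = [-1,1]$, where the labelling specifies, at each endpoint, a primitive element of $\tZ$ defined up to sign; call these elements $v_0$ and $v_1$. I would split according to the sublattice $\Lambda := \Z v_0 + \Z v_1 \subseteq \tZ$. If $\Lambda$ has rank one (equivalently $v_1 = \pm v_0$), an automorphism of $\tZ$ sends $v_0$ to $e_1$ and we match example~(4). If $\Lambda$ has rank two and equals $\tZ$, then $(v_0,v_1)$ is a $\Z$-basis and we may transport it to $(e_1,e_2)$ to match~(5). Otherwise $k := [\tZ:\Lambda] \geq 2$; normalize $v_0$ to $e_1$, complete it to a $\Z$-basis of $\tZ$, and reduce $v_1$ modulo $v_0$ to put it in the form $v_1 = -we_1 + ke_2$ with $1 \leq w \leq k$ and $\gcd(w,k) = 1$, matching~(6) for the appropriate $(k,w)$.

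The main obstacle I expect lies in the uniqueness part of case~(6): one must show that distinct admissible pairs $(k,w)$ correspond to genuinely non-equivalent decorated quotients. This amounts to determining which $A \in GL(2,\Z)$ can preserve the unordered pair of lattice lines $\{\Z e_1,\ \Z(-we_1 + ke_2)\}$, combined possibly with the endpoint-reversing diffeomorphism of $[-1,1]$ that swaps the two labels. A direct computation with $2\times 2$ integer matrices yields the classical equivalence $w' \equiv \pm w^{\pm 1} \pmod k$, reproducing the standard lens space classification; this is the implicit identification behind ``exactly one'' in case~(6).
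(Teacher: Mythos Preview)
Your approach matches the paper's: apply \Cref{thm:main0}, enumerate the connected one-manifolds-with-corners, and normalize the unimodular labelling by an automorphism of $\tZ$. There is, however, an error in your case $Q=[-1,1]$ when $d = \dim T > 2$. You set $k := [\tZ : \Lambda]$ with $\Lambda = \Z v_0 + \Z v_1$, but $\Lambda$ has rank at most two, so for $d > 2$ this index is infinite whenever $v_0$ and $v_1$ are linearly independent; likewise the case ``$\Lambda$ has rank two and equals $\tZ$'' is then vacuous, and every independent pair falls into your ``otherwise'' clause with $k=\infty$. The correct invariant---the one the paper uses---is the index of $\Lambda$ in its \emph{saturation} $\tZ \cap \on{span}_{\R}\{v_0,v_1\}$; this is always finite, equals $1$ exactly in example~(5), and equals $k \geq 2$ in example~(6). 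With $k$ redefined this way your case analysis goes through for all $d$.

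Your treatment of uniqueness in case~(6) is more explicit than the paper's, which simply asserts the trichotomy of lattice configurations without discussing whether distinct $(k,w)$ can yield equivalent data. One caution: the equivalence $w' \equiv \pm w^{\pm 1} \pmod k$ you derive is genuinely nontrivial for many $k$ (for instance $w=2$ and $w'=3$ are equivalent when $k=5$, via the automorphism $e_1 \mapsto -e_1$, $e_2 \mapsto -e_1+e_2$), so the parametrization in example~(6) by all $w$ with $1 \leq w \leq k$ and $\gcd(w,k)=1$ contains repetitions, and ``exactly one'' has to be read modulo that identification.
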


\begin{proof}
This follows from \Cref{thm:main0} together with the following facts.
First, by the classification of one-manifolds with boundary,
$M/T$ is diffeomorphic to either $S^1$, or $\R$, or $[0,\infty)$,
or $[-1,1]$.  Second, for each primitive element $\alpha$ 
of the weight lattice $\tZ$,
there exists an isomorphism $\tZ \cong \Z^n$ that takes $\alpha$ to $e_1$.
Third, for each two primitive elements $\alpha_1$ and $\alpha_2$ 
of the weight lattice $\tZ$, exactly one the following possibilities holds.
\begin{itemize}
\item
$\pm \alpha_1 = \pm \alpha_2$, 
and there exists an isomorphism $\tZ \cong \Z^n$ 
that takes $\alpha_1$ to $e_1$.
\item
$\{\alpha_1,\alpha_2\}$ generate the sub-lattice
$\tZ \cap \on{span}\{\alpha_1,\alpha_2\}$ of $\tZ$, 
and there exists an isomorphism $\tZ \cong \Z^n$
that takes $\alpha_1$ to $e_1$ and $\alpha_2$ to $e_2$.
\item
$\{\alpha_1,\alpha_2\}$ generate a sub-lattice 
of index $k \geq 2$ in $\tZ \cap \on{span}\{\alpha_1,\alpha_2\}$,
and there exists an isomorphism $\tZ \cong \Z^n$
that takes $\alpha_1$ to $e_1$ and $\alpha_2$ to $-we_1+ke_2$
where $1 \leq w \leq k$ and $\gcd(w,k)=1$.
\end{itemize}

\end{proof}

\section{Definitions and statements}
\labell{sec:defs}

In this section we give precise statements of our main results.

We begin with a reminder on weights of a torus, leading to the notion 
of a locally standard $T$-manifold.

A \textbf{torus} is a Lie group $T$
that is isomorphic to $\bm{\T^d} := (S^1)^d$, for some positive integer~$d$.
For a torus $T$ with Lie algebra $\algt$,
the elements of its \textbf{integral lattice}
$\tZ := \ker (\exp \colon \algt \to T)$ parametrize the 
homomorphisms from $S^1$ to $T$.
Explicitly,
the element $\eta \in \tZ$ encodes the homomorphism
$\rho_\eta \colon S^1 \to T$ that is given by
$\rho_\eta(e^{2\pi i s}) := \exp(s\eta)$.
The subset $(\algt_\Z)_{\primitive}$ of primitive lattice elements
corresponds to those homomorphisms $\rho_\eta$ that are one-to-one.
The elements of 
\begin{equation} \labell{tZhat}
 \tZhat := (\tZ)_{\primitive}/\pm 1 
\end{equation}
parametrize the (unoriented, unparametrized) circle subgroups of $T$.
We write this parametrization as
$$ \etahat \mapsto S^1_\etahat \quad \text{ for } \ 
   \etahat = \pm \eta \in \tZhat. $$

The \textbf{weight lattice} is the dual lattice, 
$\tZ^* \subset \algt^*$.
Its elements parametrize the homomorphisms from $T$ to $S^1$.
Explicitly, the element $\alpha \in \tZ^*$ encodes the homomorphism
$\exp(\eta) \mapsto e^{2 \pi i \left< \alpha , \eta \right> }$,
which we also write as $t \mapsto t^\alpha$.
The splittings of $T$ are given by the isomorphisms
$$ \rho_{\alpha_1,\ldots,\alpha_d} \colon T \to \T^d \ , \quad
t \mapsto (t^{\alpha_1},\ldots,t^{\alpha_d}) $$
where $\alpha_1,\ldots,\alpha_d$ is a basis of the lattice $\tZ^*$.

\begin{remark} \labell{stab etaj}
For each $j$, the inverse isomorphism
$ \rho_{\alpha_1,\ldots,\alpha_d}^{-1} \colon \T^d \to T $
takes the $j$th factor of $\T^d$
to the subcircle of $T$ that is generated by $\eta_j$,
where $\eta_1,\ldots,\eta_d$ is the dual basis to $\alpha_1,\ldots,\alpha_d$.
\eor
\end{remark}

The \textbf{real weights} are the elements of $\tZ^*/{\pm 1}$.
For each weight $\alpha \in \tZ^*$, 
we denote the corresponding real weight by $\hat{\alpha}$.
For weights $\alpha_1,\ldots,\alpha_m \in \tZ^*$,
we denote by $\C^m_{\alpha_1,\ldots,\alpha_m}$ the vector space $\C^m$
with the $T$-action 
$t \cdot (z_1,\ldots,z_m) = (t^{\alpha_1}z_1, \ldots,t^{\alpha_m}z_m) $.
As real $T$-representations,
$\C^m_{\alpha_1,\ldots,\alpha_m}$ and $\C^m_{\beta_1,\ldots,\beta_m}$ 
are isomorphic if and only if there exists a permutation $\sigma$ such that 
$\hat{\alpha}_j = \hat{\beta}_{\sigma(j)}$ for all $j$.
For any real $T$-representation $V$ with fixed subspace $V^T$,
the quotient $V/V^T$ is isomorphic to some $\C^m_{\alpha_1,\ldots,\alpha_m}$.
The \textbf{non-zero real weights of $V$} are the elements
$\hat{\alpha}_1 \ldots, \hat{\alpha}_m$ of $\tZ^*/{\pm 1}$.
Whether they are a basis to the weight lattice is independent of the
choice of representatives $\alpha_1,\ldots,\alpha_m$.
This holds if and only if $m=d$ and there exists a splitting
$\rho \colon T \to \T^d$ 
and a $\rho$-equivariant real-linear isomorphism $V/V^T \to \C^d$.

We now fix a $d$-dimensional torus $T$,
and we will apply the above notions to sub-tori of~$T$.

\begin{definition} 
\labell{def:locally standard}
A {$\bm T$}\textbf{-manifold} is a (smooth) manifold $M$ equipped with 
a (smooth) $T$-action.
The \textbf{stabilizer} of a point $x \in M$
is the subgroup $H := \{ h \in T \ | \ h \cdot x = x \}$.
If $H$ is connected, the \textbf{non-zero isotropy weights} at $x$ 
are the non-zero real weights of the isotropy representation at $x$,
namely, of the linearized $H$-action on $T_xM$.
A $T$-manifold $M$ is \textbf{locally standard}
if, at each point, the stabilizer is a sub-torus $H$ and the non-zero 
isotropy weights are a basis to the weight lattice of $H$.
\end{definition}

\begin{definition} 
\labell{def:Tchart}
For any non-negative integers $n,l,m$ with $n+l = d$,
we equip the space $\C^n \times \T^{l} \times \R^m$
with the standard $\T^d$-action on its first $d$ coordinates.
A \textbf{locally standard $\mathbf{T}$-chart} on a $T$-manifold $M$
is a $\rho_{\alpha_1,\ldots,\alpha_d}$-equivariant diffeomorphism
$$\begin{array}{ccc}
 U_M & \xrightarrow{\quad\phi\quad} & \Omega \\
 \actsonup & & \actsonup \\
 T & \xrightarrow{\rho_{\alpha_1,\ldots,\alpha_d}} & \T^d
\end{array} $$
from a $T$-invariant open subset $U_M$ of $M$
to a $\T^d$-invariant open subset $\Omega$ 
of $\C^n \times \T^l \times \R^m$,
for some non-negative integers $n$ and $l$ such that $n+l=d$
and for some basis $\alpha_1,\ldots,\alpha_{d}$ 
of the weight lattice $\tZ^*$.
Note that $\phi$ determines $\alpha_1,\ldots,\alpha_d$.
\end{definition} 

\begin{lemma} \labell{locally standard criterion}
A $T$-manifold is locally standard if and only if for every point 
there exists a locally standard $T$-chart whose domain contains the point.
\end{lemma}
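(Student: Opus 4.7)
The plan is to prove the two implications separately. For the ``if'' direction, I verify the conditions of \Cref{def:locally standard} in the local model. At a point $(z,t,r) \in \C^n \times \T^l \times \R^m$, the $\T^d$-stabilizer is the product of the circle factors indexed by the vanishing complex coordinates, visibly a sub-torus, and the non-zero real isotropy weights are the restrictions of the corresponding standard basis vectors $e_j$, which form a basis of its weight lattice. Since any locally standard chart $\phi$ is $\rho$-equivariant with $\rho$ a Lie group isomorphism, these model properties transfer to any point $x \in U_M \subset M$.

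For the ``only if'' direction, fix $x \in M$ and let $H$ be its stabilizer, an $n$-dimensional sub-torus. By hypothesis, the non-zero real isotropy weights at $x$ form a basis of $\mathfrak{h}_\Z^*/\pm 1$; pick representatives $\gamma_1,\ldots,\gamma_n \in \mathfrak{h}_\Z^*$. Choosing an $H$-invariant inner product to split off $T_x(T\cdot x)$, the slice $V := T_xM / T_x(T\cdot x)$ decomposes as $V^H \oplus (V/V^H)$, and applying the paper's description of real $T$-representations whose non-zero weights form a basis (with the sub-torus $H$ playing the role of $T$) identifies $V$ $H$-equivariantly with $\C^n_{\gamma_1,\ldots,\gamma_n} \oplus \R^m$ for some $m \geq 0$. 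Note that the $H$-action on $T_x(T\cdot x) \cong \algt/\mathfrak{h}$ is trivial, so the non-zero real weights on $T_xM$ and on $V$ agree.

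I then extend this to a splitting of $T$. Let $\eta_1,\ldots,\eta_n$ be the basis of $\mathfrak{h}_\Z$ dual to $\gamma_1,\ldots,\gamma_n$. Because $H \subset T$ is a sub-torus, $\mathfrak{h}_\Z$ is a direct summand of $\tZ$, so $\eta_1,\ldots,\eta_n$ extends to a $\Z$-basis $\eta_1,\ldots,\eta_d$ of $\tZ$; let $\alpha_1,\ldots,\alpha_d$ be the dual basis of $\tZ^*$. By construction $\alpha_j|_{\mathfrak{h}}=0$ for $j>n$ and $\alpha_j|_{\mathfrak{h}}=\gamma_j$ for $j \le n$, so $\rho_{\alpha_1,\ldots,\alpha_d}$ sends $H$ isomorphically onto $\T^n \times \{1\} \subset \T^d$ and intertwines the $H$-action on $V$ with the standard $\T^n$-action on $\C^n \oplus \R^m$. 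Applying the equivariant slice (tubular neighborhood) theorem for the compact Lie group $T$ then yields a $T$-equivariant diffeomorphism from a $T$-invariant open neighborhood of $T\cdot x$ onto the associated bundle $T \times_H V$, which the splitting identifies $\rho$-equivariantly with $\T^l \times \C^n \times \R^m$ (for $l = d-n$). Reordering the factors produces the required locally standard $T$-chart containing $x$.

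The main obstacle is really organizational rather than substantive: the equivariant slice theorem for compact group actions is classical, so the content of the argument is the coordinated choice of bases, ensuring that a single splitting of $T$ simultaneously trivializes the associated bundle $T \times_H V$ over $T/H \cong \T^l$ and puts the $H$-action on $V$ into standard form. Once the basis $\eta_1,\ldots,\eta_d$ of $\tZ$ is in place, all further identifications are forced.
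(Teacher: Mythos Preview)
Your argument is correct and is precisely the approach the paper takes: its entire proof is the single sentence ``This is a consequence of Koszul's slice theorem,'' and what you have written is a careful unpacking of that citation, including the choice of a lattice basis extending the dual of the isotropy weights so that the associated bundle $T\times_H V$ trivializes to the standard model. There is nothing to add.
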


\begin{proof}
This is a consequence of Koszul's slice theorem \cite{koszul}.
\end{proof}

\begin{remark} \labell{lem:criterion}
\Cref{locally standard criterion} can be rephrased
as saying that our definition of ``locally standard''
in \Cref{def:locally standard}
is equivalent to Wiemeler's definition of ``locally standard''
in \cite[\S 1]{Wi:classification}.
\eor
\end{remark}

\begin{example} \labell{locally standard examples}
Principal $T$-bundles are those locally standard $T$-manifold
that are locally modelled on $\T^l \times \R^m$
(that is, $n=0$ and $\dim T = l$).
Locally toric $T$-manifolds are those locally standard $T$-manifolds
that are locally modelled on $\C^n$ (that is, $l=m=0$ and $\dim T = n$).
\eoe
\end{example}

Here is a reminder on manifolds-with-corners.  For more details,
see John Lee's textbook \cite[Section~16]{Le} 
and Douady's original paper \cite{douady}.

A map $f \colon \calO \to \calO'$ 
from an (arbitrary) subset $\calO$ of a Cartesian space $\R^N$
to an (arbitrary) subset $\calO'$ of a Cartesian space $\R^{N'}$
 --- in particular from a (relatively) open subset of $\Rplus^n \times \R^m$
to a (relatively) open subset of $\Rplus^{n'} \times \R^{m'}$ ---
is \textbf{smooth} if for each point of $\calO$ 
there exist an open neighbourhood $W$ of the point in $\R^N$
and a smooth map $F \colon W \to \R^{N'}$
such that $F|_{\calO \cap W} = f|_{\calO \cap W}$.
Such a map $f$ is a \textbf{diffeomorphism} if it is smooth 
and has a smooth inverse. 

A \textbf{chart-with-corners} on a topological space $Q$
is a homeomorphism from an open subset $U$ of $Q$
to a (relatively) open subset $\calO$ of $\Rplus^n \times \R^m$ 
for some non-negative integers $n$ and~$m$.
Two charts-with-corners 
$\varphi \colon U \to \calO \subset \Rplus^n \times \R^m$
and $\varphi' \colon U' \to \calO' \subset \Rplus^{n'} \times \R^{m'}$ 
are \textbf{compatible}
if the map $\varphi' \circ {\varphi}^{-1} \colon \varphi(U \cap U') 
\to \varphi'(U \cap U')$,
which is a homeomorphism 
from a (relatively) open subset of $\Rplus^n \times \R^m$
to a (relatively) open subset of $\Rplus^{n'} \times \R^{m'}$,
is a diffeomorphism.
An \textbf{atlas-with-corners} on $Q$ is a set of charts-with-corners
whose domains cover $Q$ and that are pairwise compatible.  
A \textbf{manifold-with-corners} is a Hausdorff second countable 
topological space $Q$ equipped with a maximal atlas-with-corners.
On a manifold-with-corners $Q$, whenever we refer to a chart-with-corners
on $Q$,
we implicitly assume that it is in the given maximal atlas-with-corners.

In a manifold-with-corners $Q$,
the \textbf{depth} of a point $q$ is the unique integer $k$
such that, for every chart $\psi$ whose domain contains $q$
and whose image is open in $\Rplus^n \times \R^m$,
exactly $k$ among the first $n$ coordinates of $\psi(q)$ vanish.
Following Douady \cite{douady},
we denote the set of points of depth $k$ by $\bm{\mathring{\del^k}Q}$.
The \textbf{interior} of $Q$ is the set $\mathring{\del^0}Q$ 
of points of depth zero; we also denote it $\bm{\mathring{Q}}$.
Its inclusion map into $Q$ is a homotopy equivalence.

A map $\psi \colon Q_1 \to Q_2$ of manifolds-with-corners
is \textbf{smooth}\footnote{Some authors use a more restrictive definition
of ``smooth map'' between manifolds-with-corners. 
For diffeomorphisms, the definitions agree.} 
if for each chart-with-corners $\varphi_1$ of $Q_1$
and $\varphi_2$ of $Q_2$, the composition
$\varphi_1 \circ \psi \circ \varphi_2^{-1}$
is a smooth map between subsets of Cartesian spaces.
This holds if and only if for each smooth function $f \colon Q_2 \to \R$
the composition $f \circ \psi \colon Q_1 \to \R$ is smooth.
The map $\psi$ is a \textbf{diffeomorphism} if it is a bijection
and $\psi$ and $\psi^{-1}$ are smooth.
More generally, $\psi$ is a \textbf{local diffeomorphism}
if for each $x \in Q_1$ it restricts to a diffeomorphism
from some neighbourhood of $x$ in $Q_1$ 
to some neighbourhood of $\psi(x)$ in~$Q_2$.
This implies that, for each $k$,
we have $\mathring{\del}^kQ_1 = \psi^{-1}(\mathring{\del}^kQ_2)$.

Recall that we have fixed a $d$-dimensional torus $T$.

\begin{definition} \labell{def:unimodular} \ 
Fix a manifold-with-corners $Q$. 
A \textbf{unimodular labelling} on $Q$ is a function 
$$\lambdahat \colon \mathring{\del^1}Q \to \tZhat$$
(see \eqref{tZhat}) that has the following property.
For each point in $Q$ there exist 
non-negative integers $n$ and $m$ with $0 \leq n \leq d$,
and a basis $\eta_1,\ldots\eta_d$ of the integral lattice $\tZ$,
and a chart-with-corners 
$\psi \colon U_Q \to \calO \subset \Rplus^n \times \R^m$
whose domain contains the point, such that 
for each $x \in\mathring{\del}^1 Q \cap U_Q$ and $j \in\{1,\ldots,n\}$,
if the $j$th coordinate of $\psi(x)$ vanishes,
then $\lambdahat(x) = \pm \eta_j$.
Such a pair $(\psi,(\eta_1,\ldots,\eta_d))$
is called a \textbf{unimodular chart} on $(Q,\lambdahat)$.
\eod
\end{definition}

\begin{remark} \labell{rk:mfld-w-faces}
A manifold-with-corners $Q$ is a \textbf{manifold with faces} 
(see J\"anich \cite{janich})
if, for each $k$, each point in $\mathring{\del^k} Q$ is in the closure
of exactly $k$ connected components of $\mathring{\del^1} Q$.
The \textbf{codimension $\bm{k}$ faces} 
are then the closures of the connected components of $\mathring{\del^k} Q$.
The \textbf{facets} are the codimension $1$ faces.
If there exists a unimodular labelling 
$\lambdahat \colon {\mathring\del^1}Q \to \tZhat$,
then $Q$ is a manifold with faces,
and the unimodular labelling can be identified with a function
from the set of facets to $\tZhat$.
\eor
\end{remark}

\begin{remark} \labell{rk:Tbundles}
We briefly recall the classification of principal $T$-bundles
over manifolds-with-corners.
For principal circle bundles over manifolds,
relevant details can be found in Kostant's paper \cite{Ko}.
The case of principal $T$-bundles over manifolds-with-corners
is analogous.

First, one defines the Chern class of a principal $T$-bundle
over a manifold or manifold-with-corners $Q$.
This is a cohomology class on $Q$ of degree 2
with coefficients in the integral lattice $\algt_\Z$ of $T$. 
Its definition is through a choice of local trivializations
whose domains cover~$Q$; the resulting (\v{C}ech) cohomology class
is independent of this choice.
Second, one shows that the Chern class determines the bundle
up to isomorphism; this is the \emph{uniqueness} part of the classification.
Third, one shows that every cohomology class on $Q$ of degree 2
with coefficients in $\algt_\Z$ can be obtained in this way; 
this is the \emph{existence} part of the classification.

Given principal $T$-bundles $P \to Q$ and $P' \to Q'$,
every equivariant smooth map $P \to P'$ 
descends to a smooth map $Q \to Q'$ 
that intertwines the Chern classes.
Moreover, a smooth map $Q \to Q'$ 
admits a smooth equivariant lifting $P \to P'$
if and only if it intertwines the Chern classes. 

Thus, we obtain a functor
from the category of principal $T$-bundles over manifolds-with-corners,
where morphisms are equivariant smooth maps,
to the category of manifolds-with-corners $Q$
that are decorated with a cohomology class in $H^2(Q;\algt_\Z)$,
where morphisms are
smooth maps that intertwine the cohomology classes.
By ``existence'', this functor is essentially surjective.
By the above criterion for lifting, 
the functor is full.

Moreover, $P \to P'$ is a diffeomorphism iff $Q \to Q'$ is a diffeomorphism,
and $P \to P'$ is a local diffeomorphism iff $Q \to Q'$ is a local 
diffeomorphism.  So the functor restricts to functors 
between the subcategories with the same objects
where the morphisms are diffeomorphisms or local diffeomorphisms
rather than more general smooth maps. 
All these functors are essentially surjective and full.
\eor
\end{remark}

The following three lemmas give the invariants
of locally standard $T$-actions.

\begin{lemma} \labell{l:quotient}
Let $M$ be a locally standard $T$-manifold.
Then there exists a unique manifold-with-corners structure on $M/T$
with which a real valued function on $M/T$ is smooth
if and only if its pullback to $M$ is smooth.
\end{lemma}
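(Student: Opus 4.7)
The plan is to handle uniqueness and existence separately, and in both cases to work chart by chart using the locally standard $T$-charts of \Cref{def:Tchart}.

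For uniqueness, I would observe that a manifold-with-corners structure on a topological space is recoverable from its sheaf of smooth real valued functions --- this is the content of Sikorski's differential space formalism announced in \Cref{sec:differential spaces}. Hence two such structures on $M/T$ that share the same smooth functions (in both cases characterized as those whose pullback to $M$ is smooth) must coincide.

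For existence, I would construct a candidate atlas on $M/T$ as follows. Given a locally standard $T$-chart $\phi \colon U_M \to \Omega \subseteq \C^n \times \T^l \times \R^m$, the orbit map
\[
q \colon \C^n \times \T^l \times \R^m \to \Rplus^n \times \R^m,
\qquad (z_1,\ldots,z_n,\mathbf{t},x) \mapsto (|z_1|^2,\ldots,|z_n|^2,x),
\]
factors through a homeomorphism from $\Omega/\T^d$ onto a (relatively) open subset $q(\Omega)$ of $\Rplus^n \times \R^m$. Composing with the homeomorphism $U_M/T \to \Omega/\T^d$ induced by $\phi$ produces a candidate chart-with-corners on the open set $U_M/T \subseteq M/T$, and covering $M$ by locally standard $T$-charts yields a candidate atlas.

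The main obstacle is compatibility: I must show that, on overlaps, the transition between two such candidate charts is a diffeomorphism between open subsets of $\Rplus^n \times \R^m$ in Douady's sense. This is precisely the content foreshadowed in \Cref{sec:functions descend}, and its key ingredient is Schwarz's theorem, which identifies the $\T^n$-invariant smooth functions on $\C^n$ with the smooth functions of $|z_1|^2,\ldots,|z_n|^2$. From this, the transition map on the quotient between any two local models is smooth, and applying the same argument to the inverse shows it is a diffeomorphism. Once compatibility is established, the atlas extends to a maximal one and defines the required manifold-with-corners structure on $M/T$.

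Finally, the smoothness characterization reduces to the local statement: for a candidate chart induced by $\phi$, a function $f \colon q(\Omega) \to \R$ is smooth in the sense of \Cref{def:Tchart}'s ambient framework if and only if $f \circ q \circ \phi \colon U_M \to \R$ is $T$-invariant and smooth. This is again Schwarz's theorem in the $\Rplus^n$ directions, with the $\R^m$ directions contributing no difficulty. A function on $M/T$ is smooth precisely when it is smooth in every chart, which by the local statement is equivalent to its pullback to $M$ being smooth. The hard part throughout is Schwarz's theorem and its use to descend smoothness across the quotient; once that is in place, the rest is bookkeeping.
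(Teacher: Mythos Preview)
Your proposal is correct and follows essentially the same route as the paper: both arguments use locally standard $T$-charts to produce local models $\Rplus^n \times \R^m$ for the quotient, and both rely on Schwarz's theorem (the paper also offers Whitney's even-function theorem as an alternative) to identify invariant smooth functions with smooth functions on the model, which is exactly what is needed for compatibility of the charts and for the smoothness characterization. The only organizational difference is that the paper packages the argument through Sikorski's differential-space language (equipping $M/T$ with the quotient differential structure first and then invoking the general fact that a second-countable differential space locally diffeomorphic to open subsets of $\Rplus^n \times \R^m$ carries a unique manifold-with-corners structure), whereas you build the atlas directly and check compatibility by hand; the content is the same.
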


\begin{lemma} \labell{l:labelling}
Let $M$ be a locally standard $T$-manifold.
Equip $Q := M/T$ with the manifold-with-corners structure
that is described in Lemma~\ref{l:quotient}.
Then the preimage in~$M$ of each point of $\mathring{\del^1}Q$
is an orbit whose stabilizer is a circle subgroup of $T$,
and the map 
$$ \hat\lambda_M \colon \mathring{\del^1}Q \to \tZhat $$
that associates to each point in $\mathring{\del^1}Q$
the element of $\tZhat$ that encodes the stabilizer of its preimage
is a unimodular labelling.
\end{lemma}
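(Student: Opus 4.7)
The plan is to argue locally via the model $\C^n \times \T^l \times \R^m$. Given $x \in \mathring{\del^1}Q$, choose any $y \in M$ whose $T$-orbit maps to $x$, and use \Cref{locally standard criterion} to obtain a locally standard $T$-chart $\phi \colon U_M \to \Omega \subset \C^n \times \T^l \times \R^m$ with $y \in U_M$, equivariant through a splitting $\rho_{\alpha_1,\ldots,\alpha_d} \colon T \to \T^d$. Let $\eta_1,\ldots,\eta_d$ denote the basis of $\tZ$ dual to $\alpha_1,\ldots,\alpha_d$; this is a basis of $\tZ$ because $\alpha_1,\ldots,\alpha_d$ is a basis of $\tZ^*$.

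The first step is to show that $\phi$ descends to a chart-with-corners
$$\psi \colon U_Q \to \calO \subset \Rplus^n \times \R^m$$
on $Q$, where $U_Q := U_M/T$ and $\calO$ is the image of $\Omega$ under the map $(z,t,r) \mapsto (|z_1|^2,\ldots,|z_n|^2, r)$. That $\psi$ is a homeomorphism onto a relatively open subset of $\Rplus^n \times \R^m$ is immediate; to upgrade this to a diffeomorphism of manifolds-with-corners, I would combine \Cref{l:quotient} --- according to which a real-valued function on $Q$ is smooth exactly when its pullback to $M$ is smooth --- with Schwarz's theorem on smooth invariants of compact group actions, which asserts that the smooth $\T^d$-invariant functions on $\C^n \times \T^l \times \R^m$ are precisely the smooth functions of $|z_1|^2,\ldots,|z_n|^2$ and of the $\R^m$-coordinates. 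This identification of $\psi$ as a chart in the atlas of $Q$ is the main technical point of the proof; everything else is a direct reading-off from the model.

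With $\psi$ in hand, since $x \in \mathring{\del^1}Q$ and $\psi$ is a local diffeomorphism of manifolds-with-corners, $\psi(x)$ has depth $1$ in $\Rplus^n \times \R^m$, so exactly one among its first $n$ coordinates --- say the $j$-th --- vanishes. The preimage of $x$ in $M$ is the single orbit $T \cdot y$, and via $\phi$ it corresponds to the $\T^d$-orbit through a point of $\Omega$ whose $j$-th complex coordinate is zero while the other first $n-1$ complex coordinates are non-zero; the $\T^d$-stabilizer of such a point is exactly the $j$-th circle factor of $\T^d$. Pulling back through $\rho_{\alpha_1,\ldots,\alpha_d}^{-1}$ and invoking \Cref{stab etaj} identifies the stabilizer of $T \cdot y$ in $T$ with the subcircle $S^1_{\hat{\eta}_j}$; in particular it is a circle subgroup of $T$, and $\hat\lambda_M(x) = \hat{\eta}_j$. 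Therefore the pair $(\psi, (\eta_1,\ldots,\eta_d))$ is a unimodular chart around $x$ in the sense of \Cref{def:unimodular}, and since such a chart exists around every point of $Q$, this proves that $\hat\lambda_M$ is a unimodular labelling.
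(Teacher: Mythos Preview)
Your proposal is correct and follows essentially the same approach as the paper: the paper packages the technical content (that a locally standard $T$-chart descends to a chart-with-corners on $Q$, and that the stabilizer over a point is read off from which coordinates of $\psi$ vanish) into a separate preparatory lemma (\Cref{structure on chart}), proved via Schwarz's theorem just as you suggest, and then the proof of \Cref{l:labelling} is a one-line invocation of that lemma together with \Cref{locally standard criterion} and \Cref{def:unimodular}. One small presentational point: the definition of unimodular labelling requires a unimodular chart around \emph{every} point of $Q$, not only points of $\mathring{\del^1}Q$, and the condition must hold for every $x' \in \mathring{\del^1}Q \cap U_Q$, not just the chosen $x$; your construction clearly gives both, but you should say so explicitly rather than starting with $x \in \mathring{\del^1}Q$ and then asserting it for all of $Q$ at the end.
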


\begin{lemma} \labell{l:cM}
Let $M$ be a locally standard $T$-manifold.
Let $\Mfree$ be the set of points of $M$ whose $T$ stabilizer is trivial.
Then $\Mfree$ is the preimage in $M$
of the interior $\intQ$ of the manifold-with-corners $Q := M/T$,
the map $\pi|_{\Mfree} \colon \Mfree \to \intQ$ is a principal $T$-bundle,
and 
there exists a unique cohomology class $c_M \in H^2(Q;\tZ)$
whose pullback to $H^2(\intQ;\tZ)$ is the Chern class 
of this principal $T$-bundle.
\end{lemma}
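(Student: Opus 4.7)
The plan is to establish the three assertions of \Cref{l:cM} in the order stated, using a locally standard $T$-chart near each point to reduce the first two items to computations in the local model $\C^n \times \T^l \times \R^m$, and then invoking the homotopy equivalence $\intQ \hookrightarrow Q$ already noted in \Cref{sec:defs} to extend the Chern class uniquely from the interior.

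For the first assertion, I would pick a locally standard $T$-chart $\phi \colon U_M \to \Omega \subset \C^n \times \T^l \times \R^m$ around an arbitrary point, as in \Cref{def:Tchart} and \Cref{locally standard criterion}. Under the standard $\T^d$-action, the stabilizer of $(z,s,r)$ is the subtorus generated by the coordinate circles indexed by $\{ j : z_j = 0 \}$, so it is trivial exactly when all $z_j \neq 0$. Via $\phi$, and via the chart-with-corners on $Q$ induced from the quotient $\Omega \to \Omega/\T^d \subset \Rplus^n \times \R^m$ provided by \Cref{l:quotient}, these are precisely the points mapped to the interior of the corresponding chart on $Q$. Covering $M$ by such charts gives $\Mfree = \pi^{-1}(\intQ)$.

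For the second assertion, the $T$-action on $\Mfree$ is free by definition, and properness descends from the local model, since the standard $\T^d$-action on $(\Cx)^n \times \T^l \times \R^m$ is proper. For local triviality of $\pi|_{\Mfree} \colon \Mfree \to \intQ$, I would again use the chart $\phi$: polar coordinates on each $\Cx$ factor of the local model produce a smooth $\T^d$-equivariant trivialization of $(\Cx)^n \times \T^l \times \R^m$ over $(\Rpos)^n \times \R^m$, and $\phi$ transports this to a local trivialization of $\pi|_{\Mfree}$.

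For the third assertion, I would rely on the fact, recalled in \Cref{sec:defs}, that the inclusion $\intQ \hookrightarrow Q$ is a homotopy equivalence. By homotopy invariance of cohomology, the pullback
\[ H^2(Q;\tZ) \to H^2(\intQ;\tZ) \]
is therefore an isomorphism. The Chern class of the principal $T$-bundle from the second assertion, as described in \Cref{rk:Tbundles}, is an element of $H^2(\intQ;\tZ)$, and its unique preimage $c_M \in H^2(Q;\tZ)$ is the desired cohomology class, with uniqueness built in. The main obstacle is a bookkeeping one: one must confirm that the manifold-with-corners structure on $Q$ of \Cref{l:quotient}, restricted to $\intQ$, coincides with the smooth manifold structure that $\intQ$ inherits as the base of the bundle $\Mfree \to \intQ$. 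This is immediate in the local model $(\Cx)^n \times \T^l \times \R^m \to (\Rpos)^n \times \R^m$, where both structures are standard, and passes to $Q$ by naturality, so the rest is routine.
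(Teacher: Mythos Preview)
Your proposal is correct and follows essentially the same approach as the paper: both use locally standard $T$-charts to identify $\Mfree$ with the preimage of $\intQ$ via the stabilizer computation in the model, construct local trivializations via polar coordinates on the $(\Cx)^n$ factor, and invoke the homotopy equivalence $\intQ \hookrightarrow Q$ to extend the Chern class. The paper packages the first two steps through its helper \Cref{structure on chart}, but the content is identical to what you outline.
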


We prove Lemmas \ref{l:quotient}, \ref{l:labelling} and \ref{l:cM} 
in Section \ref{sec:invariants}.  Assuming these lemmas,
we proceed to define the invariants of a locally standard $T$-manifold
and to state our main theorems.

\begin{definition} \labell{def:invariants}
To a locally standard $T$-manifold $M$, we associate the following items.
\begin{enumerate}
\item \label{def:Q}
The quotient $Q := M/T$, equipped with its manifold-with-corners structure 
that is described in Lemma~\ref{l:quotient};
we call it the \textbf{quotient of the $\bm T$-manifold $\bm M$}.
\item \labell{def:lambda}
The unimodular labelling $\hat{\lambda} \colon \mathring{\del^1}Q \to \tZhat$
that is described in Lemma~\ref{l:labelling};
we call it \textbf{the unimodular labelling of the $\bm T$-manifold $\bm M$}.
\item \labell{def:C}
The cohomology class $c_M \in H^2(Q;\tZ)$ 
that is described in Lemma~\ref{l:cM};
we call it the \textbf{Chern class of the $\bm T$-manifold $\bm M$}.
\end{enumerate}
\end{definition}

Parts \eqref{main:descends}, \eqref{main:lifts}, and \eqref{main:exists}
of the following theorem 
contain the classification that we sketched in Section~\ref{sec:intro}.

\begin{theorem}\labell{thm:main} \ 
\begin{enumerate}
\item \labell{main1}
Let $M_1$ and $M_2$ be locally standard $T$-manifolds,
let $Q_1 := M_1/T$ and $Q_2 := M_2/T$
and $\pi_1 \colon M_1 \to Q_1$ and $\pi_2 \colon M_2 \to Q_2$
be their quotients and their quotient maps,
let $\lambdahat_{1}$ and $\lambdahat_{2}$
be their unimodular labellings, 
and let $c_{M_1}$ and $c_{M_2}$ be their Chern classes.

\begin{enumerate} 
\item \labell{main:descends}
Let $\tilde{\psi} \colon M_1 \to M_2$ be an equivariant diffeomorphism.
Then there exists a unique diffeomorphism $\psi \colon Q_1 \to Q_2$\
such that the diagram \eqref{below} commutes.
The diffeomorphism $\psi$ satisfies
$\psi^* c_{M_2} = c_{M_1}$ and $\lambdahat_{2} \circ \psi = \lambdahat_{1}$.

\item \labell{main:lifts}
Let $\psi \colon Q_1 \to Q_2$ be a diffeomorphism.
Suppose that $\psi^* c_{M_2} = c_{M_1}$
and that $\lambdahat_{2} \circ \psi = \lambdahat_{1}$.
Then there exists a $T$-equivariant diffeomorphism 
$\tpsi \colon M_1 \to M_2$ such that the diagram \eqref{below} commutes.
\end{enumerate}

\begin{equation} \labell{below}
\xymatrix{
 M_1 \ar[r]^{\tpsi} \ar[d]^{\pi_1} & M_2 \ar[d]^{\pi_2} \\
 Q_1 \ar[r]^{\psi} & Q_2
}
\end{equation}

\item \labell{main:exists}
Let $Q$ be a manifold-with-corners,
let $\lambdahat_Q \colon \mathring{\del^1} Q \to \tZhat$
be a unimodular labelling,
and let $c_Q$ be a cohomology class in $H^2(Q;\tZ)$.
Then there exist a locally standard $T$-manifold $M$
and a diffeomorphism of manifolds-with-corners $\psi \colon M/T \to Q$
such that $\psi^* c_Q = c_M$
and such that $\lambdahat_Q \circ \psi = \lambdahat_M$.

\end{enumerate}
\end{theorem}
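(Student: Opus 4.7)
The plan is to handle the three parts of \Cref{thm:main} by leveraging (i) the naturality of the quotient construction under equivariant maps, (ii) the classification of principal $T$-bundles over manifolds-with-corners recalled in \Cref{rk:Tbundles}, and (iii) the equivalence of categories between locally standard $T$-manifolds and decorated principal $T$-bundles sketched in the introduction, mediated by a cutting functor and a simultaneous toric radial blowup functor.

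For Part \eqref{main:descends}, given an equivariant diffeomorphism $\tpsi \colon M_1 \to M_2$, orbits map to orbits, so there is a unique bijection $\psi \colon Q_1 \to Q_2$ making diagram \eqref{below} commute. By \Cref{l:quotient}, smoothness on each $Q_i$ is detected by pullback to $M_i$, so $\psi$ and $\psi^{-1}$ are both smooth and $\psi$ is a diffeomorphism of manifolds-with-corners. Since $T$ is abelian, the stabilizer at $\tpsi(x)$ equals the stabilizer at $x$, so $\lambdahat_2 \circ \psi = \lambdahat_1$ by the definition in \Cref{l:labelling}. Restricting to the free loci, $\tpsi$ becomes an isomorphism of principal $T$-bundles covering $\psi|_{\intQ_1}$, so the Chern classes of the restricted bundles are intertwined; by the uniqueness clause of \Cref{l:cM}, this forces $\psi^* c_{M_2} = c_{M_1}$.

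For Part \eqref{main:exists}, I would start with a triple $(Q, \lambdahat_Q, c_Q)$. By the existence clause of \Cref{rk:Tbundles}, there is a principal $T$-bundle $P \to Q$ with Chern class $c_Q$. I would then apply the cutting functor (to be constructed in Sections~\ref{sec:cut-1}--\ref{sec:cutting}) to the pair $(P, \lambdahat_Q)$ to obtain a locally standard $T$-manifold $M$. The statement then reduces to verifying the expected naturality of cutting: the quotient of the cut is $Q$ with labelling $\lambdahat_Q$, and the free locus of the cut is canonically identified with $P|_{\intQ}$, so its Chern class is $c_Q$.

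For Part \eqref{main:lifts}, given $\psi \colon Q_1 \to Q_2$ preserving labellings and Chern classes, the strategy is to lift $\psi$ in two stages using the equivalence of categories. I would apply the simultaneous toric radial blowup functor to each $M_i$ to obtain a principal $T$-bundle $P_i \to Q_i$ whose cutting recovers $M_i$ and whose Chern class equals $c_{M_i}$. The hypothesis $\psi^* c_{M_2} = c_{M_1}$ combined with the lifting clause of \Cref{rk:Tbundles} then produces an equivariant diffeomorphism $P_1 \to P_2$ covering $\psi$; applying the cutting functor and the preservation of labellings under cutting yields the desired $\tpsi \colon M_1 \to M_2$. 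The principal obstacle is the construction and properties of the cutting and blowup functors themselves---in particular, producing the correct smooth structure on the cut of a principal bundle near corner strata, verifying that the blowup of a locally standard $T$-manifold is a smooth principal bundle, and establishing that these two functors are mutually inverse equivalences. These technical issues occupy the bulk of the paper; the categorical formalism in \Cref{sec:proof modulo cutting} is what packages them into the three bullets of \Cref{thm:main0}.
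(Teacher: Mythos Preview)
Your proposal is correct and follows essentially the same route as the paper: Part~\eqref{main:descends} is deduced directly from Lemmas~\ref{l:quotient}, \ref{l:labelling}, \ref{l:cM}, while Parts~\eqref{main:lifts} and~\eqref{main:exists} are reduced to the existence and properties of the cutting and simultaneous toric radial blowup functors (Proposition~\ref{natural isos}), with the classification of principal $T$-bundles from \Cref{rk:Tbundles} supplying the lift at the level of~$\frakP_\invt$. The only cosmetic difference is that the paper packages your concrete two-stage lift for Part~\eqref{main:lifts} into the abstract functor-chasing of \Cref{functor chasing}, but the underlying argument is the same.
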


\begin{proof}[Proof of Part~\eqref{main:descends} of \Cref{thm:main}] 
Because $\tilde{\psi}$ is $T$-equivariant, there exists a unique map
$\psi \colon Q_1 \to Q_2$ such that the diagram \eqref{below} commutes.
Because $\tilde{\psi}$ is a $T$-equivariant diffeomorphism
and by \Cref{l:quotient} and Part~\eqref{def:Q} of \Cref{def:invariants},
$\psi$ is a diffeomorphism of manifolds-with-corners.
Because $\tpsi$ is a $T$-equivariant diffeomorphism 
and by \Cref{l:labelling} and Part~\eqref{def:lambda} of \Cref{def:invariants},
we have ${\lambdahat_{2} \circ \psi = \lambdahat_{1}}$.
Because $\tilde{\psi}$ is a $T$-equivariant diffeomorphism
and by \Cref{l:cM} and Part~\eqref{def:C} of \Cref{def:invariants},
we have $\psi^* c_{M_2} = c_{M_1}$. 
\end{proof}

\begin{proof}[Pointers to proofs of Parts \eqref{main:lifts}
and~\eqref{main:exists} of \Cref{thm:main}.] \ 
In \Cref{rk:functor} we rephrase these parts of \Cref{thm:main} 
as saying that a certain functor is full and essentially surjective.
In \Cref{what we need 1} and \Cref{what we need 2}
we prove that this functor is full and essentially surjective,
assuming the existence and properties of some additional functors,
which are formulated in Proposition \ref{natural isos}.
In Sections \ref{sec:cut-1}--\ref{sec:cutting}
and \ref{sec:blowups}--\ref{sec:blowup smoothly}
we establish the existence and properties of these additional functors.
\end{proof}

We now formulate our classification in a category-theoretical terms.
Here we consider categories whose morphisms are diffeomorphism.
We expect to be able to allow more general morphisms; 
see \Cref{rk:transverse maps}.

A \textbf{decorated manifold-with-corners} 
is a triple $(Q,\lambdahat,c)$, where $Q$ is a manifold-with-corners,
$\lambdahat \colon \del^1  Q \to \tZhat$ is a unimodular labelling,
and $c$ is a cohomology class in $H^2(Q,\tZ)$.
Sometimes we abbreviate and write $Q$ instead of $(Q,\lambdahat,c)$.

\begin{definition} \labell{def:functor}
Let $\frakM_\iso$ denote the category whose objects are 
locally standard $T$-manifolds
and whose morphisms are $T$-equivariant diffeomorphisms.
Let $\frakQ_\iso$ denote the category whose objects are 
decorated manifolds-with-corners $(Q,\lambdahat,c)$
and whose morphisms are diffeomorphism of the manifolds-with-corners
that intertwine their unimodular labellings and their homology classes;
we call these \emph{decoration-preserving diffeomorphisms}.
Part~\eqref{main:descends} of \Cref{thm:main} defines a functor
($M \mapsto Q := M/T$)
from the category $\frakM_\iso$ to the category $\frakQ_\iso$.
We call it the \textbf{quotient functor}.
\end{definition}

\begin{remark} \labell{rk:functor}
The quotient functor $\frakM_\iso \to \frakQ_\iso$
gives rise to a map from the set of isomorphism classes
of the category $\frakM_\iso$ to the set of isomorphism classes
of the category $\frakQ_\iso$.
Part~\eqref{main:lifts} of~\Cref{thm:main} can be rephrased
as saying that the quotient functor 
$\frakM_\iso \to \frakQ_\iso$ is full.
This implies that the map on isomorphism classes is one-to-one.
Part~\eqref{main:exists} of~\Cref{thm:main}
can be rephrased as saying that the quotient functor 
$\frakM_\iso \to \frakQ_\iso$ is essentially surjective.
This means that the map on isomorphism classes is onto.
\eor
\end{remark}

\begin{remark} \labell{rk:1c}
The functor $\frakM_\iso \to \frakQ_\iso$ is not faithful, but 
we know the extent to which it's not faithful: 
two equivariant diffeomorphisms 
$\psi, \psi' \colon M_1 \to M_2$
descend to the same diffeomorphism $Q_1 \to Q_2$ if and only if 
there exists a smooth map $t \colon Q_1 \to T$
such that $\psi'(x) = t(\pi_1(x)) \cdot \psi(x)$
for all $x \in M_1$.
This follows from the analogous fact for principal $T$ bundles, 
together with the equivalence of categories of \Cref{equiv of cat}.
\eor
\end{remark}

\section{Differential spaces}
\labell{sec:differential spaces}

It is convenient to work with the notion of a differential space,
as axiomatized by Sikorski.
In this section we give some relevant definitions and facts.
For some details, see \cite{sikorski1,sikorski2,sniat}. 

\begin{definition} \labell{diff space}
A \textbf{differential space} 
is a topological space $X$
that is equipped with \textbf{differential structure},
which is a nonempty collection $\calF$ of real valued functions on $X$
that satisfies the following axioms.
\begin{enumerate}[topsep=0pt]
\item
The given topology on $X$ coincides
with the initial topology that is induced from~$\calF$.
\item
If $g_1,\ldots,g_k$ are in $\calF$
and $h \colon \R^k \to \R$ is smooth then $h(g_1,\ldots,g_k)$ is in $\calF$.
\item
Given any real valued function $g$ on $X$,
if every point in $X$ has a neighbourhood on which $g$ coincides
with a function in $\calF$, then $g$ is in $\calF$.
\end{enumerate}
\end{definition}

Given differential spaces $(X,\calF_X)$ and $(Y,\calF_Y)$,
a map from $X$ to $Y$ is \textbf{smooth}
if its composition with each element of $\calF_Y$ is an element of $\calF_X$.
It is a \textbf{diffeomorphism} if it is smooth and has a smooth inverse.
Every smooth map is continuous; every diffeomorphism is a homeomorphism.
A map from $X$ to $\R$ is smooth,
with $\R$ equipped with its standard differential structure,
if and only if the map is in $\calF_X$.

\begin{remark}\labell{topology}
Let $X$ be a topological space, and let $\calF$ be a nonempty collection
of real valued functions on $X$.
Suppose that (2) holds.  Then (1) is equivalent to (1'):
\begin{enumerate}[topsep=0pt]
\item[(1')]
The functions in $\calF$ are continuous,
and the topology on $X$ is $\calF$-regular. 
\end{enumerate}
(Being $\calF$-regular means that for any closed subset $C$ of $X$ 
and any point $x \in X$ outside $C$ 
there exists $\rho \in \calF$ that is supported in $X \setminus C$
and is identically $1$ on a neighbourhood of $x$.)
Indeed, let $\tau_X$ be the given topology on $X$
and let $\tau_\calF$ be the initial topology that is induced from $\calF$.
Then $\tau_\calF \subset \tau_X$
if and only if the functions in $\calF$ are continuous,
and, assuming that this holds, $\tau_\calF \supset \tau_X$
if and only if $\tau_X$ is $\calF$-regular (see \cite[Lemma~2.15]{flows}).
\eor
\end{remark}

\begin{remark}\labell{subspace}
On a subset $A$ of a differential space $X$, equipped with the subset topology,
the \textbf{subspace differential structure}
consists of those real valued functions $g$ 
such that for every point in $A$ there exist a neighbourhood $U$ in $X$
and a smooth function on $X$ that coincides with $g$ on $A \cap U$.
For subsets $B \subset A \subset X$, 
the differential structure on $B$ as a subset of $A$
coincides with the differential structure on $B$ as a subset of $X$.
Given another differential space $Y$, a map $Y \to A$ is smooth 
if and only if the composition 
$Y \to A \xrightarrow{\text{inclusion}} X$
is smooth.
\eor
\end{remark}

\begin{remark}
Given a differential space $(X,\calF_X)$,
by associating to each open subset $U$ its subspace differential structure
$\calF_U$, we obtain a sheaf of continuous real-valued functions
that contains the constant functions.
Such a sheaf is the same thing as a \emph{functional structure},
introduced by Hochschild in \cite[Chap.~VI]{Hoch}
and used by Bredon in \cite[Chap.~VI, p.~297]{Br}.
A functional structure $U \mapsto \calF_U$ 
on a topological space $X$ arises in this way if and only if 
each $\calF_U$ it is closed under compositions with smooth functions
and the given topology on $X$
coincides with the initial topology induced by $\calF_X$.
\eor
\end{remark}

\begin{remark} \labell{mfld}
On a (smooth) manifold or manifold-with-boundary or manifold-with-corners,
the set of all real valued smooth functions is a differential 
structure.\footnote{Condition (1') of \Cref{topology}
uses that the underlying topological space is locally compact Hausdorff.}
A map between manifolds or manifolds-with-boundary or manifolds-with-corners
is smooth in the usual sense
if and only if it is smooth as a map of differential spaces.
On a differential space that is second countable
and that is locally diffeomorphic
to open subsets of the spaces $\Rplus^n \times \R^m$ for non-negative integers 
$n$ and $m$, 
there exists a unique manifold-with-corners structure
whose differential structure coincides with the given one.
\eor
\end{remark}

\begin{remark} \labell{quotient}
Let $X$ be a manifold and $G$ a compact Lie group acting on $X$.
Then $X/G$, with its quotient topology, is Hausdorff and second countable.
Moreover, the set of real-valued functions on $X/G$ whose pullback to $X$
is smooth is a differential structure,
called the \textbf{quotient differential structure}.
The proof that the initial topology on $X/G$ coincides with the quotient
topology on $X/G$ uses the fact that the $G$-average of a smooth function
is smooth. 
\eor
\end{remark}

\begin{remark} \labell{subquotient}
Let $X$ be a manifold and $G$ a compact Lie group acting on $X$.
Then, for any $G$-invariant subset $A$ of $X$,
the differential structure on $A/G$ as a subset of $X/G$
coincides with its differential structure as a quotient of $A$.
This too uses the fact that the $G$-average of a smooth function is smooth.
\eor
\end{remark}

In the following two lemmas we spell out another way
to construct a manifold structure on a topological space or on a set.

\begin{lemma} \labell{mfld lemma1}
Let $X$ and $Q$ be topological spaces.   Assume that  $Q$  is Hausdorff
and second countable.  Let 
$$\pi \colon X \to Q$$ be a continuous map.
Let $( U_i )_{i\in I}$ be an open cover of $Q$. For each $i \in I$,
let $X_i$ be a manifold-with-corners 
and let $\phi_i \colon \pi^{-1}(U_i) \to X_i$
be a homeomorphism.  Let
$$ \pi_i := \pi \circ \phi_i^{-1} \colon X_i \to U_i.$$
Suppose that, for each $i,j \in I$,
the homeomorphism $\phi_j \circ \phi_i^{-1}$
from the open subset $\pi_i^{-1}(U_i \cap U_j)$ of $X_i$
to the open subset $\pi_j^{-1}(U_i \cap U_j)$ of $X_j$ is a diffeomorphism.
Then there exists a unique manifold-with-corners structure on $X$
such that each $\phi_i$ is a diffeomorphism.

If each $X_i$ is a manifold (namely, its boundary is empty),
then $X$ is a manifold.
\end{lemma}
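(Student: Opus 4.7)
The plan is to transport the manifold-with-corners structure from each $X_i$ to $\pi^{-1}(U_i) \subset X$ via the homeomorphism $\phi_i$, and glue. Concretely, I would declare a subset $V \subset X$ to be open precisely when each intersection $V \cap \pi^{-1}(U_i)$ is open and $\phi_i(V \cap \pi^{-1}(U_i))$ is open in $X_i$; but since $\phi_i$ is already a homeomorphism onto $X_i$ and the $\pi^{-1}(U_i)$ are open in $X$ (as preimages of opens under the continuous map $\pi$), the topology of $X$ already agrees with this gluing. An atlas-with-corners on $X$ is then obtained by taking, for each $i$ and each chart-with-corners $\psi \colon W \to \calO \subset \Rplus^n \times \R^m$ on $X_i$, the composition $\psi \circ \phi_i$ restricted to $\phi_i^{-1}(W)$.

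The key step is compatibility of these charts. If two such charts come from the same $X_i$, compatibility is immediate from the fact that $X_i$ is itself a manifold-with-corners. If they come from different indices $i$ and $j$, the transition map factors through $\phi_j \circ \phi_i^{-1}$, which by hypothesis is a diffeomorphism on its domain of definition, composed with charts on $X_i$ and $X_j$; hence the transition is smooth with smooth inverse.

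Next I would check the two global topological hypotheses required of a manifold-with-corners. For Hausdorffness: given distinct points $x,y \in X$, if $\pi(x) \neq \pi(y)$, separate $\pi(x)$ and $\pi(y)$ by disjoint open sets in $Q$ (using that $Q$ is Hausdorff) and take preimages under the continuous map $\pi$; if $\pi(x) = \pi(y)$, then both points lie in some $\pi^{-1}(U_i)$, and $\phi_i$ transports them to distinct points in the Hausdorff space $X_i$, where disjoint neighbourhoods exist. For second countability: since $Q$ is second countable it is Lindel\"of, so the cover $(U_i)_{i \in I}$ admits a countable subcover $(U_{i_n})_{n \in \N}$; each $\pi^{-1}(U_{i_n})$ is homeomorphic to the second countable space $X_{i_n}$, and $X$ is a countable union of these open subsets, hence second countable.

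Uniqueness of the manifold-with-corners structure is clear: requiring each $\phi_i$ to be a diffeomorphism pins down the smooth structure on each open set $\pi^{-1}(U_i)$, and these cover $X$. Finally, the addendum is immediate: if no $X_i$ has boundary, none of the constructed charts has image meeting $\{x_1 = 0\}$, so $X$ is a genuine manifold. I do not anticipate any essential obstacle here; the only mildly delicate point is the Hausdorff verification, where one must use both the Hausdorffness of $Q$ (to separate points in different fibres) and the Hausdorffness of each $X_i$ (to separate points in the same fibre).
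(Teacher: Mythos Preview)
Your proposal is correct and follows essentially the same approach as the paper: the same two-case Hausdorffness argument, the same pull-back of atlases via the $\phi_i$, and the same compatibility check via the hypothesis on $\phi_j \circ \phi_i^{-1}$. The only minor variation is in the second-countability step---you pass to a countable subcover via Lindel\"of and note that a countable union of second-countable open subsets is second countable, whereas the paper constructs an explicit countable basis by first refining a countable basis of $Q$ through the cover---but both arguments are standard and equivalent in effect.
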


\begin{proof}
Let $x,y \in X$.
First, 
suppose $\pi(x) \neq \pi(y)$. 
Because $Q$ is Hausdorff, there exist disjoint open subset $U_x,U_y$ of $Q$
such that $\pi(x) \in U_x$ and $\pi(y) \in U_y$.
The preimages $W_x := \pi^{-1}(U_x)$ and $W_y := \pi^{-1}(U_y)$
are then disjoint open subsets of $X$ such that $x \in W_x$ and $y \in W_y$.
Next, 
suppose $\pi(x) = \pi(y) =:q$. Let $i \in I$ be such that $q \in U_i$.
Because $X_i$ is Hausdorff, there exist disjoint open subsets 
$W_x',W_y'$ of $X_i$
such that $\phi_i(x) \in W_x'$ and $\phi_i(y) \in W_y'$. 
The preimages $W_x := \phi_i^{-1}(W_x')$ and $W_y := \phi_i^{-1}(W_y')$
are then disjoint open subsets of $X$ such that $x \in W_x$ and $y \in W_y$.
Thus, $X$ is Hausdorff.

Let $\{B_n\}_{n \in \N}$ be a countable basis for the topology of $Q$,
such that for each $n \in \N$
there exists $i_n \in I$ such that $B_n \subset U_{i_n}$.
For each $n \in \N$, let $\{B_{n,m}\}_{m \in \N}$
be a countable basis for the topology of $X_{i_n}$.
Then $\{\phi_{i_n}^{-1}(B_{n,m})\}$ 
is a countable basis for the topology of $X$.

For each $i$, let 
$\{ \varphi_{i,j} \colon W_{i,j} \to \Omega_{i,j} \ | \ j \in J_i\}$
be an atlas-with-corners on $X_i$.  Then 
$\bigcup_i \{ \varphi_{i,j} \circ \phi_i|_{\phi_i^{-1}(W_{i,j})} \ | \ 
 j \in J_i \}$ is an atlas-with-corners on $X$
with respect to which each $\phi_i$ is a diffeomorphism,
and any maximal atlas-with-corners on $X_i$ with respect to which 
each $\phi_i$ is a diffeomorphism must contain it.

Because $\phi_i \colon \pi^{-1}(U_i) \to X_i$ are diffeomorphisms
and their domains are an open cover of~$X$,
if $X_i$ are manifolds-with-corners then so is $X$,  
and if $X_i$ are manifolds then so is~$X$.
\end{proof}

\begin{lemma} \labell{mfld lemma2}
Let $P$ be a set, and let $Q$ be a topological space
that is Hausdorff and second countable.
Let $\pi \colon P \to Q$ be a map (of sets).
Let $( U_i )_{i\in I}$ be an open cover of $Q$.
For each $i \in I$, 
let $X_i$ be a manifold, and let $\phi_i \colon \pi^{-1}(U_i) \to X_i$
be a bijection such that the composition 
$$\pi_i := \pi \circ \phi_i^{-1} \colon X_i \to U_i$$ 
is continuous.
Suppose that, for each $i,j \in I$, the bijection $\phi_j \circ \phi_i^{-1}$
from the open subset $\pi_i^{-1}(U_i \cap U_j)$ of $X_i$
to the open subset $\pi_j^{-1}(U_i \cap U_j)$ of $X_j$ is a diffeomorphism.
Then there exists a unique manifold structure on $P$
such that each $\pi_i$ is a diffeomorphism.
\end{lemma}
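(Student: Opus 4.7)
The plan is to upgrade the set $P$ to a topological space and then invoke Lemma~\ref{mfld lemma1}, so that only the topology-building step has to be done by hand. First I would topologize $P$ by declaring a subset $W \subseteq P$ to be open iff $\phi_i(W \cap \pi^{-1}(U_i))$ is open in $X_i$ for every $i \in I$. A short check shows this is a topology, and it is the unique topology for which each $\pi^{-1}(U_i)$ is open in $P$ and each $\phi_i$ is a homeomorphism onto $X_i$. Openness of $\pi^{-1}(U_i)$ follows from the identity $\phi_j(\pi^{-1}(U_i) \cap \pi^{-1}(U_j)) = \pi_j^{-1}(U_i \cap U_j)$, which is open in $X_j$ by continuity of $\pi_j$. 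Compatibility of the $\phi_i$ on overlaps uses only that each transition $\phi_j \circ \phi_i^{-1}$ is a homeomorphism, which is part of it being a diffeomorphism.

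With this topology in place, the map $\pi \colon P \to Q$ is continuous because on the open cover $\{\pi^{-1}(U_i)\}$ it agrees with $\pi_i \circ \phi_i$. Next I would verify that $P$ is Hausdorff and second countable by the same reasoning as in the proof of Lemma~\ref{mfld lemma1}: Hausdorffness splits into the cases $\pi(x) \neq \pi(y)$ (separate through $Q$) and $\pi(x) = \pi(y)$ (separate through an $X_i$ containing both); second countability follows from a countable open refinement $\{B_n\}$ of $\{U_i\}$, provided by second countability of $Q$, together with countable bases of the relevant $X_{i_n}$ transported through $\phi_{i_n}^{-1}$.

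At this point the hypotheses of Lemma~\ref{mfld lemma1} hold for the data $(P, Q, \pi, U_i, X_i, \phi_i)$, and applying it produces a unique manifold-with-corners structure on the topological space $P$ for which each $\phi_i$ is a diffeomorphism; since each $X_i$ is a manifold (empty boundary), so is $P$. To upgrade this to uniqueness at the set-theoretic level stated in Lemma~\ref{mfld lemma2}, note that any manifold structure on the set $P$ satisfying the required condition forces the topology on each $\pi^{-1}(U_i)$, hence on all of $P$, after which Lemma~\ref{mfld lemma1}'s uniqueness clause pins down the smooth structure. I do not anticipate a serious obstacle; the whole argument is a routine promotion of ``set plus compatible charts'' to ``topological space plus compatible charts,'' reducing the problem to the already-proved Lemma~\ref{mfld lemma1}.
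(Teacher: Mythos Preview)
Your proposal is correct and follows essentially the same route as the paper: define the topology $\tau_P = \{W \subset P \mid \phi_i(W \cap \pi^{-1}(U_i)) \text{ open in } X_i \text{ for all } i\}$, verify that each $\phi_i$ becomes a homeomorphism and that this topology is forced, then invoke Lemma~\ref{mfld lemma1}. One minor redundancy: you need not separately verify that $P$ is Hausdorff and second countable, since Lemma~\ref{mfld lemma1} establishes these from its hypotheses rather than assuming them; once $\pi$ is continuous and the $\phi_i$ are homeomorphisms, you may apply Lemma~\ref{mfld lemma1} directly.
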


\begin{proof}
By \Cref{mfld lemma1}, it is enough to show
that there exists a unique topology on $P$
such that the maps $\phi_i$ are homeomorphisms.
Indeed, let 
$$\tau_P := \{ W \subset P \ | \ \phi_i(W \cap \pi^{-1}(U_i))
\text{ is open in } X_i \text{ for each } i \in I \}. $$
Then $\tau_P$ is a topology on $P$, 
the maps $\phi_i$ are homeomorphisms with respect to this topology, 
and any topology with this property must coincide with $\tau_P$.
\end{proof}

\section{Diffeomorphisms of models descend: from $M$ to $Q$}
\labell{sec:functions descend}

In this section, we work purely within models.
In the next section,
we will use the results of this section 
to prove the lemmas from \Cref{sec:defs}
that we used to state our main theorem, \Cref{thm:main}.

For any non-negative integers $n$, $l$, and $m$, we consider
the standard $\T^{n+l}$-action on $\C^n \times \T^l \times \R^m$,
\begin{multline*}
(\lambda_1,\ldots, \lambda_{n+l}) 
  \cdot (z_1,\ldots,z_n;
   b_{1},\ldots,b_{l};x_1,\ldots,x_m) \\
 = (\lambda_1 z_1, \ldots, \lambda_n z_n;
     b_{1} \lambda_{n+1}, \ldots, b_{l} \lambda_{n+l}; x_1,\ldots, x_m),
\end{multline*}
and the \textbf{model quotient map} 
\begin{multline} \labell{theta}
\theta \colon \C^n \times \T^l \times \R^m
\to \Rplus^n \times \R^m , \\
\theta(z_1,\ldots,z_n;b_{1},\ldots,b_{l};x_1,\ldots,x_m) 
  := (|z_1|^2,\ldots,|z_n|^2;x_1,\ldots,x_m).
\end{multline}
We will use the same symbol $\theta$ for these maps 
with different values of $n,l,m$.

\begin{lemma} \labell{thetabar is homeo}
Let $n$, $l$, and $m$ be non-negative integers.  
Let $\Omega$ be an invariant open subset of $\C^n \times \T^l \times \R^m$,
and let $\calO := \theta(\Omega) \subset \Rplus^n \times \R^m$.
Then $\calO$ is (relatively) open in $\Rplus^n \times \R^m$.
\end{lemma}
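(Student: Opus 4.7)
The plan is to show the slightly stronger fact that $\theta$ is an open map; the lemma then follows immediately, and the invariance hypothesis on $\Omega$ is not needed at this step (it presumably enters subsequent arguments that identify the fibers of $\theta$ with $\T^{n+l}$-orbits and upgrade $\theta$ to a homeomorphism on the quotient). I would factor $\theta = \theta_\C \circ \pi_{\T^l}$, where $\pi_{\T^l} \colon \C^n \times \T^l \times \R^m \to \C^n \times \R^m$ is the projection that forgets the $\T^l$-factor, and $\theta_\C \colon \C^n \times \R^m \to \Rplus^n \times \R^m$ is the map $(z_1,\ldots,z_n,x) \mapsto (|z_1|^2,\ldots,|z_n|^2,x)$. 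Since projections are open, it suffices to show $\theta_\C$ is open.

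As $\theta_\C$ is a Cartesian product of the squaring map $s \colon \C \to \Rplus$, $z \mapsto |z|^2$, applied in each of the $n$ complex factors, together with the identity on $\R^m$, and as a Cartesian product of open maps is open (images of basic open rectangles are basic open rectangles, and arbitrary open sets are unions of such), it is enough to prove that $s$ itself is open. Given $z_0 \in \C$ and a sufficiently small $\delta > 0$, the image $s(B(z_0,\delta))$ of the open ball of radius $\delta$ about $z_0$ is an open interval inside $\Rpos$ when $|z_0| \geq \delta$, and a half-open interval of the form $[0, r) \subset \Rplus$ when $|z_0| < \delta$. In either case, this is a neighbourhood of $s(z_0)$ in $\Rplus$, so $s$ is open.

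The only mild subtlety is the behaviour of $s$ at the boundary point $0 \in \Rplus$: one must observe that $[0, r)$ is indeed open in $\Rplus$ with its subspace topology from $\R$, which is immediate. Granting this, the composition $\theta = \theta_\C \circ \pi_{\T^l}$ is open, so $\calO = \theta(\Omega)$ is (relatively) open in $\Rplus^n \times \R^m$, as claimed.
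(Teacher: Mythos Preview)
Your proof is correct, but it takes a different route from the paper. The paper argues via properness: $\theta$ is proper, hence a closed map; the complement of $\Omega$ is closed, so its image is closed; and invariance of $\Omega$ ensures that $\theta(\C^n \times \T^l \times \R^m \setminus \Omega) = \Rplus^n \times \R^m \setminus \theta(\Omega)$, whence $\calO$ is open. So in the paper the invariance hypothesis is genuinely used. Your argument instead shows directly that $\theta$ is an open map, by factoring through the projection off $\T^l$ and reducing to the one-variable map $z \mapsto |z|^2$. This is more elementary and, as you note, makes the invariance hypothesis superfluous for this particular lemma. The paper's approach has the virtue of fitting the standard ``proper plus invariant'' template that recurs when passing to quotients by compact group actions, whereas yours is a cleaner self-contained verification tailored to this specific map.
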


\begin{proof}
Because $\Omega$ is open, its complement is closed;
because $\theta$ is proper, it's a closed map;
so the image of the complement of $\Omega$ is closed. 
Because $\theta$ is onto $\Rplus^n \times \R^m$
and its level sets are the $\T^d$ orbits,
and because $\Omega$ is $\T^d$-invariant,
the image of the complement of $\Omega$
is the complement of the image of $\Omega$ in $\Rplus^n \times \R^m$.
Since this complement is closed,
the image of $\Omega$ is open in $\Rplus^n \times \R^m$.
\end{proof}

\begin{lemma} \labell{thetabar is diffeo}
Let $n$, $l$, and $m$ be non-negative integers.  Then 
a function $h \colon \Rplus^n \times \R^m \to \R$ is smooth
if and only if its pullback
$h \circ \theta \colon \C^n \times \T^l \times \R^m \to \R$ is smooth.
\end{lemma}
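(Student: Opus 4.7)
The claim has two directions. The forward direction --- $h$ smooth implies $h \circ \theta$ smooth --- is immediate from the definitions. By the definition of smoothness on $\Rplus^n \times \R^m$ as a subset of $\R^n \times \R^m$, for each point in $\Rplus^n \times \R^m$ there is an open neighbourhood $W$ in $\R^n \times \R^m$ and a smooth extension $H \colon W \to \R$ of $h|_{W \cap (\Rplus^n \times \R^m)}$. Since the components of $\theta$ are polynomials and $\theta$ takes values in $\Rplus^n \times \R^m$, the composition $H \circ \theta$ is smooth on the open set $\theta^{-1}(W) \subset \C^n \times \T^l \times \R^m$ and agrees there with $h \circ \theta$. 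Covering $\C^n \times \T^l \times \R^m$ by such preimages shows $h \circ \theta$ is smooth.

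The converse is the substantive direction. Assume $f := h \circ \theta$ is smooth on $\C^n \times \T^l \times \R^m$. Because $\theta$ is independent of the $\T^l$-coordinates $(b_1,\ldots,b_l)$, so is $f$; restricting to the slice $b_1 = \cdots = b_l = 1$ yields a smooth function $g \colon \C^n \times \R^m \to \R$ with $g(z,x) = h(|z_1|^2,\ldots,|z_n|^2, x)$. This $g$ is invariant under the coordinatewise $\T^n$-rotation action on the $\C^n$-factor, with $\R^m$ serving as a smooth parameter.

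The key step is to produce a smooth $\tilde h \colon \R^n \times \R^m \to \R$ such that $g(z,x) = \tilde h(|z_1|^2,\ldots,|z_n|^2, x)$; restricting $\tilde h$ to $\Rplus^n \times \R^m$ then exhibits $h$ as smooth in the manifold-with-corners sense. This is a parameter-dependent instance of G.~Schwarz's theorem on smooth invariants of compact Lie group actions, specialized to $\T^n$ acting linearly on $\C^n$: the algebra of $\T^n$-invariant polynomials on $\C^n$ is generated by $|z_1|^2,\ldots,|z_n|^2$, and hence every $\T^n$-invariant $C^\infty$ function on $\C^n$ is a smooth function of these generators; the conclusion extends with smooth dependence on parameters in $\R^m$. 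As a self-contained alternative, one may induct on $n$, reducing one $S^1$-factor at a time via Whitney's classical theorem that a smooth even function of one real variable is a smooth function of its square, applied with smooth dependence on the remaining coordinates as parameters.

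The main obstacle is this invariant-theoretic step --- extracting smooth dependence on the squared moduli from $\T^n$-invariant smoothness on $\C^n$; once it is in hand, the rest of the argument is routine bookkeeping with the definitions of smoothness on subsets of Cartesian spaces and with the slice reduction that eliminates the $\T^l$-factor.
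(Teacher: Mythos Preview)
Your proposal is correct and matches the paper's proof essentially step for step: the paper also dispenses with the forward direction immediately since $\theta$ is smooth, and for the converse it offers exactly the two arguments you sketch---a parametrized Whitney even-function argument (restricting to real $z_j$'s to obtain a function even in each of the first $n$ coordinates) and, alternatively, Schwarz's theorem applied to the $\T^{n+l}$-invariant function $h\circ\theta$, using that the invariant polynomials are generated by the components of $\theta$. The only cosmetic difference is that the paper eliminates the $\T^l$-factor by noting the function is independent of those coordinates rather than by slicing at $b=1$, but this is the same observation.
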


\begin{proof}
Because $\theta$ is smooth, if $h$ is smooth then $h \circ \theta$ is smooth.
For the converse direction, suppose that $h \circ \theta$ is smooth.
Then its restriction to $\R^n \times \T^l \times \R^m$
is an even function in the first $n$ coordinates
and is independent of the next $l$ coordinates. 
By a parametrized version of a theorem of Whitney \cite{whitney},
there exists 
a smooth function $g \colon \R^n \times \R^m \to \R$
such that 
$(h\circ\theta)(y_1,\ldots,y_n;b_1,\ldots,b_l;x_1,\ldots,x_m)
 = g(y_1^2,\ldots,y_n^2;x_1,\ldots,x_m)$
for all $(y_1,\ldots,y_n;b_1,\ldots,b_l;x_1,\ldots,x_m)
 \in \R^n \times \T^l \times \R^m$.
Necessarily, $h = g\big|_{\Rplus^n \times \R^m}$.

Alternatively, because $h \circ \theta$ is $\T^{n+l}$ invariant
and by a theorem of Schwarz \cite{Sc},
$h \circ \theta$ can be expressed as a smooth function
of $\T^{n+l}$-invariant polynomials:
$h \circ \theta = g \circ (g_1,\ldots,g_k)$ 
where $g_1,\ldots,g_k$ are $\T^{n+l}$-invariant polynomials
and $g \in C^\infty(\R^k)$. 
But every $\T^{n+l}$-invariant polynomial
is a polynomial in the coordinates of $\theta$,
so $g \circ (g_1,\ldots,g_k) 
 = g \circ (q_1 \circ \theta, \ldots, q_k \circ \theta)
 = (g \circ (q_1,\ldots,q_k)) \circ \theta$
for polynomials $q_1,\ldots,q_k$ on $\R^n \times \R^m$.
This implies that $h = g \circ (q_1,\ldots,q_k)\big|_{\Rplus^n \times \R^m}$,
and so $h$ is smooth.
\end{proof}

\begin{corollary} \labell{thetabar on subset}
Let $n$, $l$, and $m$ be non-negative integers.
Let $\Omega$ be an invariant open subset of $\C^n \times \T^l \times \R^m$,
and let $\calO = \theta(\Omega)$.
Then a function $h \colon \calO \to \R$ is smooth
if and only if its pullback $h \circ \theta \colon \Omega \to \R$
is smooth.
\end{corollary}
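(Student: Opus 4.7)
The plan is to reduce the statement to the global \Cref{thetabar is diffeo} by means of a $\T^{n+l}$-invariant cutoff supported in $\Omega$. The forward direction will be immediate from the definitions: if $h$ is smooth at $q \in \calO$, pick a smooth extension $H$ defined on an open neighborhood $W$ of $q$ in $\R^n\times\R^m$; then $H\circ\theta$ is smooth on the open set $\theta^{-1}(W)\cap\Omega$ and agrees there with $h\circ\theta$, so $h\circ\theta$ is smooth on $\Omega$. So all the content is in the converse.

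For the converse, I would fix $q\in\calO$ and assume $h\circ\theta$ is smooth on $\Omega$. The first key observation is that the fiber $\theta^{-1}(q)$ is a single orbit of the compact group $\T^{n+l}$, hence compact, and that $\theta^{-1}(\calO) = \Omega$ (because fibers of $\theta$ are $\T^{n+l}$-orbits and $\Omega$ is $\T^{n+l}$-invariant). By \Cref{thetabar is homeo}, $\calO$ is open in $\Rplus^n\times\R^m$, so I can choose a smooth bump $\tilde\chi$ on $\Rplus^n\times\R^m$ that equals $1$ on a neighborhood $V_q\subset\calO$ of $q$ and has compact support in $\calO$. Setting $\chi := \tilde\chi\circ\theta$ produces a smooth, $\T^{n+l}$-invariant function on $\C^n\times\T^l\times\R^m$ that is identically $1$ on $\theta^{-1}(V_q)$ and has compact support in $\theta^{-1}(\calO) = \Omega$, using that $\theta$ is proper because its fibers are compact.

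Next I would extend $\chi\cdot (h\circ\theta)$ by zero outside $\Omega$ to obtain a smooth, $\T^{n+l}$-invariant function $F$ on all of $\C^n\times\T^l\times\R^m$. Because $F$ is $\T^{n+l}$-invariant and the fibers of $\theta$ are $\T^{n+l}$-orbits, it descends set-theoretically as $F = g\circ\theta$ for some $g\colon\Rplus^n\times\R^m\to\R$, and \Cref{thetabar is diffeo} then gives that $g$ is smooth. On $\theta^{-1}(V_q)$ we have $\chi\equiv 1$, hence $h\circ\theta = g\circ\theta$ there, and since $\theta$ maps $\theta^{-1}(V_q)$ onto $V_q$, I obtain $h = g$ on $V_q$. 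By the very definition of smoothness on $\Rplus^n\times\R^m$, $g$ admits a smooth extension to an open neighborhood of $q$ in $\R^n\times\R^m$, which simultaneously provides a smooth extension of $h$ near $q$.

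The one step I expect to require care is the construction of the cutoff $\chi$: it must be $\T^{n+l}$-invariant, equal to $1$ on the entire fiber $\theta^{-1}(q)$ (not merely at one point), and have support inside $\Omega$. Pulling back a bump on $\calO$ via $\theta$ handles all three requirements simultaneously: invariance is automatic from invariance of $\theta$, the level set $\{\chi = 1\}$ is the pullback of a neighborhood of $q$ in $\calO$ and so contains the full fiber, and compact support is preserved because $\theta$ is proper. Once $\chi$ is in hand, the rest of the proof is a direct application of the global \Cref{thetabar is diffeo} to the invariantly truncated function.
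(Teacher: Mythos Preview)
Your proof is correct and essentially self-contained, but it takes a different route from the paper's argument. The paper works in the language of Sikorski differential spaces: \Cref{thetabar is diffeo} is read as saying that $\theta$ induces a diffeomorphism of differential spaces $(\C^n\times\T^l\times\R^m)/\T^{n+l}\to\Rplus^n\times\R^m$, and the corollary follows formally from the general fact (\Cref{subquotient}) that, for a compact group action, the quotient differential structure on an invariant subset agrees with the subspace structure on the image in the full quotient. Your argument instead does the localization by hand: you pull back a bump from $\calO$ to produce an invariant cutoff in $\Omega$, extend by zero, and invoke \Cref{thetabar is diffeo} directly. What your approach buys is that it avoids the abstract subquotient lemma and is entirely concrete; what the paper's approach buys is that, once \Cref{subspace,quotient,subquotient} are in place, the corollary is a one-line formal consequence, and the same machinery is reused elsewhere (e.g.\ in \Cref{structure on chart}). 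Your key observation---that pulling the bump back from $\calO$ via $\theta$ automatically gives invariance, full-fiber support, and (by properness of $\theta$) compact support in $\Omega$---is exactly the explicit content hidden inside the averaging step of \Cref{subquotient}.
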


\begin{proof}
A rephrasing of \Cref{thetabar is diffeo} is that 
$\theta$ induces a diffeomorphism of differential spaces 
from $(\C^n \times \T^l \times \R^m)/\T^{n+l}$,
with its quotient differential structure 
induced from $\C^n \times \T^l \times \R^m$ (\Cref{quotient}), 
to $\Rplus^n \times \R^m$,
with its subspace differential structure induced from $\R^n \times \R^m$
(\Cref{subspace}).
This restricts to a diffeomorphism 
from $\Omega/\T^d$, with its subspace structure
induced from $(\C^n \times \T^l \times \R^m)/\T^{n+l}$,
to $\calO$, with its subspace structure induced from $\Rplus^n \times \R^m$.
\Cref{thetabar on subset} then follows from 
the fact that, on $\Omega/\T^d$, 
the subspace structure induced from $(\C^n \times \T^l \times \R^m)/\T^{n+l}$
coincides with the quotient differential structure induced from~$\Omega$
(\Cref{subquotient}).
\end{proof}

\begin{corollary} \labell{f to g}
Fix non-negative integers $n, n', l, l', m, m'$ such that $n+l=n'+l'=:d$.
Let $\rho \colon \T^d \to \T^d$ be an isomorphism, and let 
$$ f \colon \Omega \to \Omega' $$
be a $\rho$-equivariant smooth map
from an invariant open subset $\Omega$ of $\C^n \times \T^l \times \R^m$
to an invariant open subset $\Omega'$ 
of $\C^{n'} \times \T^{l'} \times \R^{m'}$.
Let $\calO = \theta(\Omega)$ and $\calO' = \theta(\Omega')$.
Then there exists a unique map
$g \colon \calO \to \calO'$
such that the following diagram commutes:
$$ \xymatrix{
\Omega \ar[r]^{f} \ar[d]_{\theta} & **[r] \Omega' \ar[d]^{\theta} \\
\calO \ar[r]^{g} & **[r] \calO' \ .
} $$
Moreover, the map $g$ is smooth. 
Moreover, if $f$ is a diffeomorphism, then $g$ is a diffeomorphism
\end{corollary}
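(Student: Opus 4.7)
The plan is to build $g$ first as a map of sets by descending $f$ along the orbit maps $\theta$ and $\theta'$, and then to upgrade it to a smooth map (and, in the diffeomorphism case, to a diffeomorphism) by invoking \Cref{thetabar on subset} as a bridge between smoothness upstairs and smoothness downstairs. The first step is to notice that the level sets of $\theta$ on $\C^n \times \T^l \times \R^m$ are precisely the $\T^{n+l}$-orbits, and similarly for $\theta'$. Because $\rho \colon \T^d \to \T^d$ is an isomorphism and $f$ is $\rho$-equivariant, $f$ maps each $\T^d$-orbit in $\Omega$ into a single $\T^d$-orbit in $\Omega'$. Combined with the surjectivity of $\theta \colon \Omega \to \calO$, this forces both the existence and the uniqueness of a set-theoretic map $g \colon \calO \to \calO'$ satisfying $g \circ \theta = \theta' \circ f$, defined unambiguously by $g(\theta(x)) := \theta'(f(x))$.

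For smoothness, I would check that the pullback $h \circ g$ is smooth for every smooth $h \colon \calO' \to \R$; since $\calO$ and $\calO'$ are open subsets of $\Rplus^n \times \R^m$ and $\Rplus^{n'} \times \R^{m'}$, this characterisation of smooth maps is legitimate, either by letting $h$ run over the coordinate functions and using the paper's definition directly, or by passing through the Sikorski framework of \Cref{sec:differential spaces}. By \Cref{thetabar on subset} applied to $\Omega$ and $\calO$, smoothness of $h \circ g$ on $\calO$ is equivalent to smoothness of $(h \circ g) \circ \theta$ on $\Omega$. Commutativity of the diagram rewrites the latter as $(h \circ \theta') \circ f$; the factor $h \circ \theta'$ is smooth on $\Omega'$ by \Cref{thetabar on subset} applied to $\Omega'$ and $\calO'$, and $f$ is smooth by hypothesis, so the composition is smooth.

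Finally, if $f$ is a diffeomorphism, then $f^{-1} \colon \Omega' \to \Omega$ is smooth and $\rho^{-1}$-equivariant, so the same construction produces a unique smooth map $g' \colon \calO' \to \calO$ with $g' \circ \theta' = \theta \circ f^{-1}$. A short diagram chase shows that $(g \circ g') \circ \theta' = \theta'$ and $(g' \circ g) \circ \theta = \theta$; by surjectivity of $\theta$ and $\theta'$, this forces $g' = g^{-1}$, so $g$ is a diffeomorphism. The only point in the whole argument that requires genuine thought is the orbit-preservation step, which uses essentially that $\rho$ is an isomorphism rather than merely a homomorphism; everything else is a formal consequence of \Cref{thetabar on subset}.
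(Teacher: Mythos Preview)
Your proof is correct and follows essentially the same route as the paper: orbit-preservation from $\rho$-equivariance gives existence and uniqueness of $g$, and \Cref{thetabar on subset} upgrades set-theoretic descent to smooth descent. The only cosmetic difference is in the diffeomorphism step, where the paper phrases the argument via functoriality (identities descend to identities, compositions descend to compositions) rather than applying the construction to $f^{-1}$ directly; the two arguments are equivalent.
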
 

\begin{proof}
By \Cref{thetabar is homeo},
$\calO$ and $\calO'$ are open in $\Rplus^n \times \R^m$ 
and in $\Rplus^{n'} \times \R^{m'}$, respectively.
Because the map $f$ is equivariant
with respect to an isomorphism $\T^d \to \T^d$,
it takes $\T^d$-orbits to $\T^d$-orbits.
Because the maps $\theta$ are onto and their level sets are the $\T^d$-orbits,
there exists a unique map $g$ such that the diagram commutes.
For any smooth function $h \colon \calO' \to \R$, 
because the diagram commutes and $\theta \circ f$ is smooth,
the invariant function 
$(h \circ g) \circ \theta \colon \Omega \to \R$ is smooth;
by \Cref{thetabar on subset}, the function $h \circ g \colon \calO \to \R$
is smooth.
By \Cref{mfld}, $g$ is smooth.

If $f$ is the identity map on $\Omega$, then $g$ is the identity map
on $\calO$.
If maps $\Omega \xrightarrow{f} \Omega' \xrightarrow{f'} \Omega''$
descend, respectively, to maps 
$\calO \xrightarrow{g} \calO' \xrightarrow{g'} \calO''$, 
then the composition $f' \circ f$ descends to the composition $g' \circ g$.
These two facts imply that if $f$ is a diffeomorphism 
then $g$ is a diffeomorphism.
\end{proof}

\section{The invariants}
\labell{sec:invariants}

In this section, we prove the lemmas from \Cref{sec:defs} 
that we used to state our main theorem, \Cref{thm:main}.

Recall that we have fixed a $d$ dimensional torus $T$.

\begin{lemma} \labell{structure on chart}
Let $M$ be a locally standard $T$-manifold, and let $Q := M/T$,
equipped with the quotient differential structure (\Cref{quotient}).
Fix a locally standard $T$-chart 
$$\phi \colon U_M \to \Omega,$$ 
with $U_M$ a $T$-invariant open subset of $M$
and $\Omega$ a $\T^d$-invariant open subset 
of $\C^n \times \T^l \times \R^m$, with $n+l=d$.
Let $U_Q := \pi(U_M)$ be the image of $U_M$ under the quotient map 
$\pi \colon M \to Q$, and let 
$\calO := \theta(\Omega) \subset \Rplus^n \times \R^m$
be the image of $\Omega$ under the model quotient map \eqref{theta}.
There exists a unique map $\psi \colon U_Q \to \calO$
with which the diagram 
\begin{equation} \labell{square phi theta}
\xymatrix{
 U_M \ar[r]^{\phi} \ar[d]^{\pi} & \Omega \ar[d]^{\theta} \\
 U_Q \ar[r]^{\psi} & **[r] \calO 
} 
\end{equation}
commutes. Moreover, the following holds.
\begin{enumerate}
\item \labell{psi is diffeo}
$U_Q$ and $\calO$ are open subsets of $Q$ and of $\Rplus^n \times \R^m$,
and the map $\psi$ is a diffeomorphism between them
(with respect to their subspace differential structures).
\item \labell{preimage stab}
There exists a basis $\eta_1,\ldots,\eta_d$ of the integral lattice $\tZ$ 
such that,
for each $x \in\mathring{\del}^1 Q \cap U_Q$ and $j \in\{1,\ldots,n\}$,
if the $j$th coordinate of $\psi(x)$ vanishes, 
then the preimage of $x$ in $M$ is a $T$-orbit whose stabilizer
is the circle subgroup of $T$ that is encoded by $\etahat_j$.
\item \labell{preimage free}
For each $x \in U_Q$,
its preimage in $M$ is a $T$ orbit with trivial stabilizer
if and only if the first $n$ coordinates of $\psi(x)$ are all non-zero.
\end{enumerate}
\end{lemma}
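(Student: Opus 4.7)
The plan is to define $\psi$ by descending $\theta \circ \phi$ through $\pi$, and then to verify the three assertions by combining the orbit structure of the model action with the results of \Cref{sec:functions descend}.

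First I would construct $\psi$. Because $\phi$ is $\rho_{\alpha_1,\ldots,\alpha_d}$-equivariant, it carries $T$-orbits in $U_M$ to $\T^d$-orbits in $\Omega$; since the $\T^d$-orbits are precisely the fibers of $\theta$, the composition $\theta \circ \phi$ is constant on $T$-orbits. Because $\pi \colon U_M \to U_Q$ is the orbit map, this forces a unique $\psi \colon U_Q \to \calO$ making~\eqref{square phi theta} commute. The set $U_Q$ is open in $Q$ because $\pi$ is an open map (the $T$-saturation of any open set is open, as $T$ acts continuously), and $\calO$ is open in $\Rplus^n \times \R^m$ by \Cref{thetabar is homeo}.

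For item~\eqref{psi is diffeo}, I would show that $\psi$ is a diffeomorphism of differential spaces. By \Cref{subquotient}, the subspace differential structure on $U_Q \subseteq Q$ coincides with the quotient differential structure on $U_M/T$, so a function $h \colon U_Q \to \R$ is smooth if and only if $h \circ \pi$ is smooth on $U_M$. Given a smooth $h \colon \calO \to \R$, the pullback $h \circ \theta$ is smooth on $\Omega$ by \Cref{thetabar on subset}, whence $(h \circ \psi) \circ \pi = h \circ \theta \circ \phi$ is smooth on $U_M$; so $h \circ \psi$ is smooth, and $\psi$ is smooth. The identical argument with $\phi^{-1}$ in place of $\phi$ shows that $\psi^{-1}$ is smooth.

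For~\eqref{preimage stab} and~\eqref{preimage free} I would read off the stabilizers directly from the explicit formula for the $\T^{n+l}$-action on the model. Let $\eta_1,\ldots,\eta_d$ be the basis of $\tZ$ dual to $\alpha_1,\ldots,\alpha_d$; by \Cref{stab etaj}, $\rho_{\alpha_1,\ldots,\alpha_d}^{-1}$ sends the $j$th coordinate circle of $\T^d$ to the subcircle of $T$ generated by $\eta_j$. At a point $p \in \Omega$ whose $j$th coordinate (for some $j \le n$) vanishes while none of its other first $n$ coordinates do, the $\T^d$-stabilizer is precisely the $j$th coordinate circle of $\T^d$; transporting via $\phi^{-1}$ gives~\eqref{preimage stab}. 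Likewise, the $\T^d$-stabilizer of $p$ is trivial exactly when none of the first $n$ coordinates of $p$ vanish (the $\T^l$-factor always acts freely), which, transported via $\phi^{-1}$, gives~\eqref{preimage free}. I do not anticipate a serious obstacle here: the lemma is essentially a repackaging of \Cref{sec:functions descend} together with the transparent orbit geometry of the model, and the only slightly delicate point is keeping straight the interplay between subspace and quotient differential structures, which is the content of \Cref{subquotient}.
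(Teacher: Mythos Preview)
Your proposal is correct and follows essentially the same approach as the paper: descend $\theta\circ\phi$ through $\pi$, invoke \Cref{thetabar is homeo}, \Cref{thetabar on subset}, and \Cref{subquotient} for part~\eqref{psi is diffeo}, and read off the stabilizers in the model via the dual basis and \Cref{stab etaj} for parts~\eqref{preimage stab} and~\eqref{preimage free}. The paper phrases part~\eqref{psi is diffeo} by factoring $\psi$ as $U_Q \xrightarrow{\ol\phi} \Omega/\T^d \xrightarrow{\thetabar} \calO$ rather than via your direct pullback argument, but the content is the same; one small omission in your write-up is an explicit check that $\psi$ is a bijection (needed before invoking $\psi^{-1}$), which follows immediately since $\phi$, being equivariant for an isomorphism $T\to\T^d$, induces a bijection between $T$-orbits and $\T^d$-orbits.
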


\begin{proof}
By the definition of the quotient topology, $U_Q$ is open in $Q$.
By \Cref{thetabar is homeo}, $\calO$ is open in $\Rplus^n \times \R^m$.
Because $\phi$ is an equivariant diffeomorphism
with respect to an isomorphism $T \to \T^d$ and $\theta$ is $\T^d$ invariant,
we have a commuting diagram
\begin{equation} \labell{square phi theta rev}
\xymatrix{
 U_M \ar[r]^{\phi} \ar[d]^{\pi} & \Omega \ar[d] \ar[rd]^{\theta} & \\
 U_Q \ar[r]^{\ol\phi} & **[r] \Omega/\T^d \ar[r]^{\quad \thetabar} & \calO \,,
} 
\end{equation}
and $\ol\phi$ is a diffeomorphism 
from $U_Q$, with its quotient differential structure induced from~$U_M$,
to $\Omega/\T^d$, with its quotient differential structure induced 
from $\Omega$.
\Cref{thetabar on subset} implies that $\thetabar$ is a diffeomorphism
of differential spaces.
By \Cref{subquotient}, the quotient differential structure on $U_Q$
induced from~$U_M$
coincides with the subspace differential structure on $U_Q$ induced from $Q$.
Composing, we conclude that $\psi := \ol\phi \circ \ol\theta$
is a diffeomorphism, proving \eqref{psi is diffeo}.

For each $y \in \calO$, the preimage $\theta^{-1}(y)$ 
is a $\T^d$ orbit in $\C^n \times \T^l \times \R^m$.
Its stabilizer in $\T^d$ is the subtorus of $\T^d$ 
whose Lie algebra is spanned by the $j$th standard basis elements 
for those $j \in \{1,\ldots,n\}$ for which $y_j=0$.

Let $\alpha_1,\ldots,\alpha_d$ be the basis of $\tZ^*$
such that $\phi$ is equivariant with respect to the isomorphism
$\rho_{\alpha_1,\ldots,\alpha_d} \colon T \to \T^d$,
and let $\eta_1,\ldots,\eta_d$ be the dual basis of $\tZ$.
Let $x \in U_Q$. Let $y = \psi(x)$.
Then $\pi^{-1}(x)$ is a $T$ orbit in $M$,
whose stabilizer in $T$ is the image under the inverse isomorphism
$\rho_{\alpha_1,\ldots,\alpha_d}^{-1} \colon \T^d \to T$
of the stabilizer of $\theta^{-1}(y)$ in $\T^d$.
By the preceding paragraph and \Cref{stab etaj},
the stabilizer of $\pi^{-1}(x)$ in $T$ 
is the subtorus of $T$ whose Lie algebra is
spanned by the elements $\eta_j$ for those $j \in \{1,\ldots,n\}$
for which $\psi_j(x)=0$.
This implies \eqref{preimage stab} and~\eqref{preimage free}
\end{proof}

\begin{proof}[Proof of Lemma~\ref{l:quotient}]
Let $M$ be a locally standard $T$-manifold, and let $Q = M/T$.
We need to prove that there exists a unique manifold-with-corners structure
on $Q$ whose differential structure coincides with the quotient differential
structure on $Q$ that is induced from $M$.

By Part~\eqref{psi is diffeo} of \Cref{structure on chart},
for every locally standard $T$-chart on $M$ 
with domain $U_M$, the image of $U_M$ in $Q$ is an open subset $U_Q$ of $Q$
that is diffeomorphic to an open subset of $\Rplus^n \times \R^m$.
By \Cref{locally standard criterion}, every point in $Q$ 
is contained in such a $U_Q$. 
By \Cref{mfld}, this completes the proof of the lemma.
\end{proof}

\begin{proof}[Proof of Lemma~\ref{l:labelling}]
Let $M$ be a locally standard $T$-manifold, and let $Q = M/T$.
We need to prove that the preimage in $M$
of each point of $\mathring{\del}^1Q$ is an orbit
whose stabilizer is a circle subgroup of $T$,
and that the map that associates to each point in $\mathring{\del}^1Q$ 
the element of $\tZhat$ that encodes the stabilizer of its preimage
is a unimodular embedding.

By Parts~\eqref{psi is diffeo} and~\eqref{preimage stab}
of \Cref{structure on chart},
from any locally standard $T$-chart on $M$ with domain $U_M$,
we obtain a chart-with-corners
$$ \psi \colon U_Q \to \calO $$
on $Q$ whose domain $U_Q$ is the image of $U_M$
and whose image is open in $\Rplus^n \times \R^m$,
and a basis $\eta_1,\ldots,\eta_d$ of $\tZ$,
such that the following holds. For each $x \in\mathring{\del}^1 Q \cap U_Q$
and $j \in\{1,\ldots,n\}$,
if the $j$th coordinate of $\psi(x)$ vanishes, 
then the preimage of $x$ in $M$ is a $T$-orbit whose stabilizer
is the circle subgroup of $T$ that is encoded by $\etahat_j$.
By \Cref{locally standard criterion}, every point in $Q$ 
is contained in such a $U_Q$.
By \Cref{def:unimodular} of a unimodular labelling, 
this completes the proof of the lemma.
\end{proof}

\begin{proof}[Proof of Lemma~\ref{l:cM}]
Let $M$ be a manifold with a locally standard $T$-action.
Let $\Mfree$ be the set of points of $M$ whose $T$ stabilizer is trivial.
We need to prove that $\Mfree$ is the preimage in $M$
of the interior $\intQ$ of the manifold-with-corners $Q := M/T$,
that 
the map $\pi|_{\Mfree} \colon \Mfree \to \intQ$ is a principal $T$-bundle,
and that
there exists a unique cohomology class $c_M \in H^2(Q;\tZ)$
whose pullback to $H^2(\intQ;\tZ)$ is the Chern class
(\Cref{rk:Tbundles})
of this principal $T$-bundle.

By Parts~\eqref{psi is diffeo} and~\eqref{preimage free}
of \Cref{structure on chart},
from any locally standard $T$-chart on $M$ with domain $U_M$,
we obtain a chart-with-corners
$$ \psi \colon U_Q \to \calO $$
on $Q$ whose domain $U_Q$ is the image of $U_M$
and whose image is open in $\Rplus^n \times \R^m$,
such that the following holds.
For each $x \in U_Q$,
its preimage in $M$ is a $T$ orbit with trivial stabilizer
if and only if the first $n$ coordinates of $\psi(x)$ are all non-zero.
By \Cref{locally standard criterion}, every point in $Q$ 
is contained in such a $U_Q$.
It follows that $\Mfree$ is the preimage of $\intQ$.
We obtain local trivializations of $\Mfree \to \intQ$
from the diffeomorphisms
$\Omega_{\free} \to (\calO \cap (\Rpos^n \times \R^m)) \times \T^d$
that are given by $(z_1,\ldots,z_n;b_1,\ldots,b_l;x_1,\ldots,x_m) \mapsto 
((|z_1|^2\ldots,|z_n|^2;x_1,\ldots,x_m),
(\frac{z_1}{|z_1|},\ldots,\frac{z_n}{|z_n|}, b_1,\ldots,b_l)).$
The inclusion $\intQ \to Q$ is a homotopy equivalence; this implies that
the restriction $H^2(Q,\tZ) \to H^2(\intQ,\tZ)$ is an isomorphism.
\end{proof}

\section{Classification modulo cutting: category-theoretical setup}
\labell{sec:proof modulo cutting}

Recall from \Cref{def:functor} and that we have a category $\frakM_\invt$
of locally standard $T$-manifolds $M$ and their equivariant diffeomorphisms,
a category $\frakQ_\invt$ of decorated manifolds-with-corners $Q$
and their decoration-preserving diffeomorphisms,
and a functor $\frakM_\invt \to \frakQ_\invt$,
taking $M$ to $Q := M/T$.
Recall from \Cref{rk:functor} that we would like to show 
that the functor $\frakM_\invt \to \frakQ_\invt$ 
is full and essentially surjective. 
We will show this with the help of a third category, $\frakP_\invt$,
of principal $T$-manifolds over decorated manifolds-with-corners
and their decoration-preserving equivariant diffeomorphisms.
In this section we sketch the argument:
we introduce the category $\frakP_\invt$, we assume that there exists a
\emph{cutting functor} $\frakP_\invt \to \frakM_\invt$ with certain properties,
and we conclude that the functor $\frakM_\invt \to \frakQ_\invt$
is full and essentially surjective, as required
(\Cref{what we need 1}, \Cref{what we need 2}).
The construction of the cutting functor and the proof of its
properties occupy the later sections.

It turns out that 
the cutting functor $\frakP_\invt \to \frakM_\invt$
is an equivalence of categories (\Cref{equiv of cat}).
This allows us to heuristically think of locally standard $T$ manifolds
as if they were principal $T$-bundles over manifolds with corners.

\begin{remark} \labell{rk:transverse maps}
It is natural to consider categories $\frakP$, $\frakM$, and $\frakQ$,
with the same objects as in $\frakP_\invt$, $\frakM_\invt$, and $\frakQ_\invt$
but with more morphisms.
There are various choices for such categories.
If we simply replace ``diffeomorphism'' by ``smooth'' everywhere,
we can still get a functor $\frakM \to \frakQ$,
but this functor is not full,
and the cutting construction does not produce a functor 
$\frakP \to \frakM$.
We do expect to be able to replace ``diffeomorphism'' 
by ``local diffeomorphism'' everywhere.
More generally, we expect to be able to require the morphisms in $\frakQ$ 
to be \emph{transverse maps} as in \cite{cutting},
with an analogous condition in $\frakM$. 
Details will appear elsewhere.
\eor
\end{remark}

We begin with some general category-theoretical vocabulary.
Keeping in mind \Cref{rk:transverse maps}, we work in a slightly more 
general setting than we strictly require, 
in that we allow categories in which not all arrows are isomorphisms.

The \emph{core} of a category $\frakA$ is the category $\frakA_\invt$
with the same objects and the invertible arrows.
Any functor $\frakA \to \frakB$ restricts to a functor
$\frakA_\invt \to \frakB_\invt$.
The functor $\frakA \to \frakB$ is essentially surjective
if and only if the functor $\frakA_\invt \to \frakB_\invt$
is essentially surjective.
We say that a functor $\frakA \to \frakB$ is \emph{full on isomorphisms}
if the functor $\frakA_\invt \to \frakB_\invt$ is full.\footnote{
If a functor is faithful, then ``full'' implies ``full on isomorphisms''.
In general, ``full on isomorphisms'' does not imply ``full'',
and ``full'' does not imply ``full on isomorphisms''.}

For any category $\frakA$, we can consider 
the set\footnote{Strictly speaking, it might be a class rather than a set.}
$C_\frakA$
of isomorphism classes of its objects.
Then $C_\frakA = C_{\frakA_\invt}$.
A functor $\frakA \to \frakB$ induces a map 
$C_\frakA \to C_\frakB$.
This map is onto if and only if the functor is essentially surjective. 
It is one-to-one if the functor is full on isomorphisms.

\begin{lemma}[Functor chasing] \labell{functor chasing}
Fix categories $\frakP$,\  $\frakM$, and $\frakQ$,
and functors $\frakP \to \frakM$,\  $\frakM \to \frakQ$, 
and $\frakP \to \frakQ$.
Assume that there exists a natural isomorphism 
from the given functor $\frakP \to \frakQ$ 
to the composed functor $\frakP \to \frakM \to \frakQ$.
\begin{enumerate}
\item[(1)]
If the functor $\frakP \to \frakQ$ is essentially surjective,
then the functor $\frakM \to \frakQ$ is essentially surjective.
\item[(2)]
If the functor $\frakP \to \frakQ$ is full
and the functor $\frakP \to \frakM$ is essentially surjective,
then the functor $\frakM \to \frakQ$ is full.
\item[(2')]
If the functor $\frakP \to \frakQ$ is full on isomorphisms
and the functor $\frakP \to \frakM$ is essentially surjective,
then the functor $\frakM \to \frakQ$ is full on isomorphisms.
\end{enumerate}
\end{lemma}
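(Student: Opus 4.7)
The plan is to prove all three parts by direct diagram chasing. Write $F\colon\frakP\to\frakM$, $G\colon\frakM\to\frakQ$, and $H\colon\frakP\to\frakQ$ for the three functors, and denote by $\eta_P\colon H(P)\to G(F(P))$ the components of the given natural isomorphism. The naturality square for a morphism $h\colon P_1\to P_2$ reads $\eta_{P_2}\circ H(h)=G(F(h))\circ \eta_{P_1}$, which I will use repeatedly.

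For (1), given $Q\in\frakQ$, essential surjectivity of $H$ produces $P\in\frakP$ and an isomorphism $H(P)\cong Q$. Composing with $\eta_P^{-1}$ yields $G(F(P))\cong H(P)\cong Q$, so $F(P)$ witnesses essential surjectivity of $G$.

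For (2), I would start with a morphism $g\colon G(M_1)\to G(M_2)$ in $\frakQ$ and construct a lift. Essential surjectivity of $F$ supplies objects $P_i\in\frakP$ together with isomorphisms $\phi_i\colon F(P_i)\to M_i$ in $\frakM$. Transport $g$ to a morphism between $H$-images by defining
$$\tilde g := \eta_{P_2}^{-1}\circ G(\phi_2)^{-1}\circ g\circ G(\phi_1)\circ \eta_{P_1}\colon H(P_1)\to H(P_2).$$
By fullness of $H$, choose $h\colon P_1\to P_2$ in $\frakP$ with $H(h)=\tilde g$, and set $\psi:=\phi_2\circ F(h)\circ\phi_1^{-1}\colon M_1\to M_2$. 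To verify $G(\psi)=g$, expand $G(\psi)=G(\phi_2)\circ G(F(h))\circ G(\phi_1)^{-1}$, substitute $G(F(h))=\eta_{P_2}\circ H(h)\circ \eta_{P_1}^{-1}=\eta_{P_2}\circ \tilde g\circ \eta_{P_1}^{-1}$ from naturality, and then cancel the factors $\eta_{P_i}$ and $G(\phi_i)$ against their inverses in $\tilde g$ to recover $g$.

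For (2') the identical construction applies inside the cores. The only point that needs care is to observe that if $g$ is an isomorphism then $\tilde g$ is an isomorphism as well, since it is a composition of $\eta_{P_i}^{\pm 1}$, the images $G(\phi_i)^{\pm 1}$ of isomorphisms under $G$, and $g$ itself; hence fullness on isomorphisms of $H$ suffices to produce $h$ in $\frakP_\invt$, and since functors preserve isomorphisms, $F(h)$ and consequently $\psi$ are isomorphisms in $\frakM_\invt$. There is no real obstacle beyond this diagram chase; the only bookkeeping concern is tracking, in (2) versus (2'), which morphisms must be isomorphisms so as to invoke the appropriate fullness hypothesis on $H$.
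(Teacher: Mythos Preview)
Your proof is correct and follows essentially the same diagram chase as the paper's. The only cosmetic difference is that the paper first absorbs the natural isomorphism by reducing to the case $H=G\circ F$, whereas you carry the components $\eta_P$ explicitly throughout; the resulting arguments are otherwise identical.
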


\begin{proof}
Because the given functor $\frakP \to \frakQ$
is naturally isomorphic to the composed functor $\frakP \to \frakM \to \frakQ$,
if one of them is essentially surjective, then so is the other.
Similarly, if one of them is full then so is the other,
and if one of them is full on isomorphisms then so is the other.
Moreover, the restricted functor $\frakP_\invt \to \frakQ_\invt$
is then naturally isomorphic to the composed functor 
$\frakP_\invt \to \frakM_\invt \to \frakQ_\invt$.
So we may assume that $\frakP \to \frakQ$ is equal 
to the composition $\frakP \to \frakM \to \frakQ$
and that $\frakP_\invt \to \frakQ_\invt$ is equal 
to the composition $\frakP_\invt \to \frakM_\invt \to \frakQ_\invt$.

We prove (1):

Let $Q$ be an object in $\frakQ$.
Because the functor $\frakP \to \frakQ$ is essentially surjective,
there exists an object $P$ in $\frakP$ 
whose image in $\frakQ$ is isomorphic to~$Q$.
The image of $P$ in $\frakM$ is then an object in $\frakM$
whose image in $\frakQ$ is isomorphic to $Q$.

We prove (2):

Let $M_1$ and $M_2$ be objects in $\frakM$,
let $Q_{M_1}$ and $Q_{M_2}$ be their images in $\frakQ$,
and let $f \colon Q_{M_1} \to Q_{M_2}$ be an arrow in~$\frakQ$.
Because the functor $\frakP \to \frakM$ is essentially surjective,
there exist objects $P_1$ and $P_2$ in $\frakP$,
with images $M_{P_1}$, $M_{P_2}$ in $\frakM$, and isomorphisms
$$ M_1 \cong M_{P_1} \quad \text{ and } \quad 
   M_2 \cong M_{P_2} .$$
Choose such objects and isomorphisms.
Applying the functor $\frakM \to \frakQ$, we get isomorphisms
$$ Q_{M_1} \cong Q_{P_1} \quad \text{ and } \quad 
   Q_{M_2} \cong Q_{P_2} ,$$
where $Q_{P_1}$ and $Q_{P_2}$ are the images of $P_1$ and $P_2$ in $\frakQ$.
Because the functor $\frakP \to \frakQ$ is full, the composition
$$ Q_{P_1} \cong Q_{M_1} \stackrel{f}{\to} Q_{M_2} \cong Q_{P_2} $$ 
has a lifting:
$$ P_1 \to P_2.$$
Applying the functor $\frakP \to \frakM$, 
we obtain an arrow 
$$ M_{P_1} \to M_{P_2} $$
that lifts the arrow $Q_{M_{P_1}} \to Q_{M_{P_2}}$.
The composition
$$ M_1 \cong M_{P_1} \to M_{P_2} \cong M_2 $$
then lifts the given arrow 
$$ Q_{M_1} \stackrel{f}{\to} Q_{M_1}.$$

We prove (2'):

Apply (2) to the restricted functors
$\frakP_\invt \to \frakM_\invt$ and $\frakM_\invt \to \frakQ_\invt$.
\end{proof}


A \textbf{principal $T$-bundle over a decorated manifold-with-corners}
is a triple $(P,Q,\lambdahat)$, where $Q$ is a manifold-with-corners, 
$\lambdahat \colon \mathring{\del^1}  Q \to \tZhat$ is a unimodular labelling,
and $\Pi \colon P \to Q$ is a principal $T$-bundle.
Sometimes we abbreviate and write $P$ instead of $(P,Q,\lambdahat)$.

\begin{definition} \labell{def:bundle quotient functor}
Let $\frakP_\invt$ denote the category whose objects are the 
principal $T$-bundles over decorated manifolds-with-corners $(P,Q,\lambdahat)$,
and whose morphisms are those equivariant diffeomorphisms $P_1 \to P_2$ 
whose induced diffeomorphism $Q_1 \to Q_2$ between the base spaces
respects their unimodular labellings.
Because $Q_1 \to Q_2$ intertwines the Chern classes of $P_1$ and $P_2$
(\Cref{rk:Tbundles}),
the map $(P,Q,\lambdahat) \mapsto (Q,\lambdahat,c)$,
where $c$ is the Chern class of $\Pi \colon P \to Q$, defines a functor 
from the category $\frakP_\invt$ to the category $\frakQ_\invt$
(\Cref{def:functor}).
We call it the \textbf{bundle quotient functor}.
\eod
\end{definition}

\begin{proposition} \labell{prop:bundle quotient functor}
The bundle quotient functor $\frakP_\invt \to \frakQ_\invt$ 
is essentially surjective
and is full (and full on isomorphisms).
\end{proposition}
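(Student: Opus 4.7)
The plan is to deduce this proposition directly from \Cref{rk:Tbundles}, which records the classification of principal $T$-bundles over manifolds-with-corners by their Chern classes, by simply carrying along the unimodular labelling as extra decoration.

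For essential surjectivity, I would start with an arbitrary object $(Q,\lambdahat,c)$ of $\frakQ_\invt$. Ignoring the unimodular labelling for a moment, the pair $(Q,c)$ is an object in the category of manifolds-with-corners decorated with a class in $H^2(Q;\tZ)$, and by the ``existence'' part of \Cref{rk:Tbundles} (applied to local diffeomorphisms, hence in particular to the identity), there exists a principal $T$-bundle $\Pi \colon P \to Q$ whose Chern class equals~$c$. Then $(P,Q,\lambdahat)$ is an object in $\frakP_\invt$, and by construction the bundle quotient functor sends it to an object of $\frakQ_\invt$ with underlying manifold-with-corners $Q$, unimodular labelling $\lambdahat$, and Chern class~$c$; that is, to $(Q,\lambdahat,c)$ itself, which is in particular isomorphic to the given object.

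For fullness on isomorphisms, I would take two objects $(P_1,Q_1,\lambdahat_1)$ and $(P_2,Q_2,\lambdahat_2)$ in $\frakP_\invt$, with Chern classes $c_1$ and $c_2$, and a morphism in $\frakQ_\invt$ between their images, that is, a diffeomorphism $\psi \colon Q_1 \to Q_2$ with $\lambdahat_2 \circ \psi = \lambdahat_1$ and $\psi^* c_2 = c_1$. Because $\psi$ intertwines the Chern classes, the ``lifting'' statement of \Cref{rk:Tbundles} produces an equivariant diffeomorphism $\tilde\psi \colon P_1 \to P_2$ that descends to $\psi$. By the definition of morphisms in $\frakP_\invt$, the map $\tilde\psi$ qualifies as a morphism in $\frakP_\invt$ precisely because its induced map on base spaces, namely $\psi$, preserves the unimodular labellings, which is given. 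Hence $\tilde\psi$ is a morphism in $\frakP_\invt$ lifting the given morphism in $\frakQ_\invt$, establishing fullness on isomorphisms.

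For ordinary fullness one argues identically, except that the source morphism in $\frakP_\invt$ need not be invertible; the lifting produced by \Cref{rk:Tbundles} is a diffeomorphism exactly when $\psi$ is, so the two statements coincide here since all morphisms in $\frakP_\invt$ and $\frakQ_\invt$ are diffeomorphisms anyway. There is no real obstacle: the entire content is to observe that a decoration-preserving diffeomorphism of bases is a morphism in $\frakQ_\invt$ precisely when it additionally intertwines Chern classes, and that this is exactly the hypothesis needed to invoke the lifting half of \Cref{rk:Tbundles}. The mildest subtlety is notational bookkeeping to confirm that the unimodular labellings, which play no role in the principal-bundle classification, are carried along verbatim by both the functor and the lift.
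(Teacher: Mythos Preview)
Your proposal is correct and is essentially the paper's own proof, which consists of a single sentence invoking \Cref{rk:Tbundles}; you have simply spelled out the details of that citation. The only remark is that your discussion of ``ordinary fullness'' versus ``fullness on isomorphisms'' is unnecessary here since, as you note, all morphisms in $\frakP_\invt$ and $\frakQ_\invt$ are already isomorphisms.
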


\begin{proof}
This follows from the classification of principal $T$-bundles 
by their Chern classes (\Cref{rk:Tbundles}).
\end{proof}

We have now introduced the \emph{bundle quotient functor}
$\frakP_\invt \to \frakQ_\invt$.
Recall that we also have the \emph{quotient functor}
$\frakM_\invt \to \frakQ_\invt$ (\Cref{def:functor}).
In the remainder of this paper we will construct 
the \emph{cutting} functor $\frakP_\invt \to \frakM_\invt$
and the \emph{simultaneous toric radial blowup} functor 
$\frakM_\invt \to \frakP_\invt$,
which will have the following properties: 

\begin{proposition} \labell{natural isos}
\begin{enumerate}
\item \labell{natural iso P M Q}
The composition $\frakP_\invt \to \frakM_\invt \to \frakQ_\invt$
is naturally isomorphic
to the bundle quotient functor ${\frakP_\invt \to \frakQ_\invt}$.
\item \labell{natural iso M P M}
The composition $\frakM_\invt \to \frakP_\invt \to \frakM_\invt$
is naturally isomorphic to the identity functor on $\frakM_\invt$.
\item \labell{natural iso P M P}
The composition $\frakP_\invt \to \frakM_\invt \to \frakP_\invt$
is naturally isomorphic to the identity functor on $\frakP_\invt$.
\end{enumerate}
\end{proposition}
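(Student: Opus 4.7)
The plan is to construct each of the three natural isomorphisms first on standard local models and then glue using the functoriality of the cutting and blowup constructions. Both $\cut$ and its inverse construction will be built chart-by-chart using the model quotient map $\theta \colon \C^n \times \T^l \times \R^m \to \Rplus^n \times \R^m$ of \Cref{sec:functions descend}. For a principal $T$-bundle $(P,Q,\lambdahat)$ and a unimodular chart on $Q$ with image $\calO \subset \Rplus^n \times \R^m$, cutting will locally replace the trivialization $\Pi^{-1}(U_Q) \cong \calO \times \T^d$ with $\theta^{-1}(\calO) \subset \C^n \times \T^l \times \R^m$, using the basis of $\tZ$ prescribed by $\lambdahat$. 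Conversely, for a locally standard $T$-manifold $M$ and a locally standard $T$-chart with image $\Omega \subset \C^n \times \T^l \times \R^m$, the blowup will locally replace $U_M$ with the principal $T$-bundle $\theta(\Omega) \times \T^d$ over $\theta(\Omega)$.

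For Part~\eqref{natural iso P M Q}, given $(P,Q,\lambdahat)$ with Chern class $c$, the locally standard $T$-manifold $\cut(P)$ comes equipped, by construction, with a canonical identification of its quotient $\cut(P)/T$ with $Q$ preserving the unimodular labelling, and with a canonical identification of $\cut(P)_{\free}$ with $P|_{\intQ}$ as principal $T$-bundles. These assemble into the desired isomorphism $(Q,\lambdahat,c_{\cut(P)}) \to (Q,\lambdahat,c)$ in $\frakQ_\invt$; the equality $c_{\cut(P)} = c$ follows because the restriction $H^2(Q;\tZ) \to H^2(\intQ;\tZ)$ is an isomorphism, as the inclusion $\intQ \hookrightarrow Q$ is a homotopy equivalence, and both classes restrict to the Chern class of the same principal bundle $P|_{\intQ}$. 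Naturality in $P$ is immediate from the local description because any morphism in $\frakP_\invt$ restricts over the interior to a bundle map.

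For Parts~\eqref{natural iso M P M} and~\eqref{natural iso P M P}, the natural isomorphism is given in each local model by the canonical collapsing map $\T^d \times \Rplus^n \times \R^m \to \C^n \times \T^l \times \R^m$ sending
\[ (t_1,\ldots,t_d; r_1,\ldots,r_n; x) \mapsto (\sqrt{r_1}\, t_1, \ldots, \sqrt{r_n}\, t_n; t_{n+1}, \ldots, t_{n+l}; x). \]
Applied to $M$, the composition $\cut \circ \on{blowup}$ produces, chart-by-chart, a $T$-invariant open subset of $\C^n \times \T^l \times \R^m$ that recovers the original chart of $M$ on the nose via this collapsing map; applied to $P$, the composition $\on{blowup} \circ \cut$ produces, chart-by-chart, a principal $T$-bundle that coincides with $P$ restricted over the chart. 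Naturality in morphisms reduces to checking that a $T$-equivariant diffeomorphism of standard models descends, via $\theta$, to a diffeomorphism of corresponding quotients and lifts back equivariantly, both of which are furnished by \Cref{f to g}.

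The main obstacle is not the pointwise verification in a single chart but rather the global coherence: one must show that the locally defined natural transformations patch together to a well-defined natural transformation of functors, independent of the choice of unimodular chart on $Q$ or locally standard $T$-chart on $M$, and that the patching respects arbitrary morphisms in the source category. The uniqueness statements built into the lemmas of \Cref{sec:invariants} --- which characterize the manifold-with-corners structure on $M/T$ and the Chern class via intrinsic data --- will provide the key tool for promoting the local natural transformations to global ones. Equivalently, the compatibility of $\cut$ and $\on{blowup}$ with transitions between standard charts is forced by \Cref{f to g}, and the bulk of the work in the remaining sections will consist in setting up these functors with sufficient care that the natural isomorphisms above become tautological once the local models are identified.
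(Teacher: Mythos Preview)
Your overall strategy --- construct the natural isomorphisms in local models via the collapsing map $(t,s,x) \mapsto (\sqrt{s_j}\,t_j;\ldots)$ and patch --- matches the paper's approach exactly, and your identification of the main obstacle (coherence under change of chart and under morphisms) is correct.

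However, there is a genuine gap in your key reduction. You write that naturality ``reduces to checking that a $T$-equivariant diffeomorphism of standard models descends, via $\theta$, to a diffeomorphism of corresponding quotients and lifts back equivariantly, both of which are furnished by \Cref{f to g}.'' But \Cref{f to g} only handles descent from an $M$-model $\Omega$ to a $Q$-model $\calO$; it says nothing about the two directions you actually need. For the cutting functor you must show that a $\rho$-equivariant diffeomorphism $G \colon \calO \times \T^d \to \calO' \times \T^d$ of $P$-models \emph{descends} to a diffeomorphism $f \colon \Omega \to \Omega'$ of $M$-models through the (non-smooth!) collapsing map $c^\std$; this is the content of \Cref{descend diffeo from P to M}, and it requires a Hadamard-lemma factorization $s_j' = s_j h_j(s,x)$ together with a positivity argument for $h_j$ along the facets. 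For the blowup functor you must show the reverse: that a $\rho$-equivariant diffeomorphism $f \colon \Omega \to \Omega'$ \emph{lifts} to a smooth map on $\calO \times \T^d$; this is \Cref{crucial cor} (and \Cref{crucial cor followup}), and it rests on the structural \Cref{characterization}, which in turn uses Bredon's trick and Whitney's even-function theorem to produce the factorization $z_j' = z_j \rho_j(b) A_j(s,x)$ with $A_j$ smooth and nonvanishing.

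Neither of these follows from \Cref{f to g}, and both are substantially harder: the collapsing map $c^\std$ is not smooth, so compatibility with it is not formal. Your sketch correctly anticipates that ``the bulk of the work'' lies in these compatibility checks, but you should not claim that \Cref{f to g} furnishes them; the paper devotes \Cref{sec:P to M} and \Cref{sec:M to P} to exactly these two technical propositions.
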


\begin{proof}[Pointers to the proof]
In Sections \ref{sec:cut-1}--\ref{sec:cutting} 
we construct the cutting functor $\frakP_\invt \to \frakM_\invt$
and a natural isomorphism
from the composition $\frakP_\invt \to \frakM_\invt \to \frakQ_\invt$
to the bundle quotient functor ${\frakP_\invt \to \frakQ_\invt}$.
In Sections \ref{sec:blowups}--\ref{sec:blowup smoothly}
we construct the simultaneous toric radial blowup functor
$\frakM_\invt \to \frakP_\invt$,
a natural isomorphism from the composition 
$\frakM_\invt \to \frakP_\invt \to \frakM_\invt$
to the identity functor on $\frakM_\invt$,
and a natural isomorphism from the composition 
$\frakP_\invt \to \frakM_\invt \to \frakP_\invt$
to the identity functor on $\frakP_\invt$.
\end{proof}

These results will allow us to prove the properties 
of the quotient functor $\frakM_\invt \to \frakQ_\invt$
that we need:

\begin{corollary} \labell{what we need 1}
(Assuming Part~\eqref{natural iso P M Q} of \Cref{natural isos},)
the quotient functor $\frakM_\invt \to \frakQ_\invt$
is essentially surjective.
\end{corollary}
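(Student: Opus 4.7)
The plan is to apply the functor chasing principle of \Cref{functor chasing}(1) directly. We have three functors in play: the cutting functor $\frakP_\invt \to \frakM_\invt$, the quotient functor $\frakM_\invt \to \frakQ_\invt$ (whose essential surjectivity we seek), and the bundle quotient functor $\frakP_\invt \to \frakQ_\invt$ of \Cref{def:bundle quotient functor}. The hypothesis of \Cref{functor chasing} demands a natural isomorphism from the given $\frakP_\invt \to \frakQ_\invt$ to the composition $\frakP_\invt \to \frakM_\invt \to \frakQ_\invt$, and this is precisely what Part~\eqref{natural iso P M Q} of \Cref{natural isos}, which we are permitted to assume, provides.

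Next, I would invoke \Cref{prop:bundle quotient functor}: the bundle quotient functor $\frakP_\invt \to \frakQ_\invt$ is essentially surjective, as an immediate consequence of the existence part of the classification of principal $T$-bundles by Chern classes recalled in \Cref{rk:Tbundles}. With both hypotheses of conclusion~(1) of \Cref{functor chasing} in hand, the conclusion that $\frakM_\invt \to \frakQ_\invt$ is essentially surjective follows formally.

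Unpacked concretely: given a decorated manifold-with-corners $(Q,\lambdahat,c) \in \frakQ_\invt$, essential surjectivity of the bundle quotient functor produces a principal $T$-bundle $(P,Q',\lambdahat') \in \frakP_\invt$ whose image in $\frakQ_\invt$ is isomorphic to $(Q,\lambdahat,c)$. The locally standard $T$-manifold $M := \cut(P) \in \frakM_\invt$ obtained by applying the cutting functor then has quotient $M/T$ isomorphic, via the natural isomorphism of Part~\eqref{natural iso P M Q} of \Cref{natural isos}, to the bundle quotient of $P$, hence to $(Q,\lambdahat,c)$. There is no serious obstacle at this stage: the content of the corollary is purely formal bookkeeping, once one grants the existence of the cutting functor and the compatibility recorded in Part~\eqref{natural iso P M Q}. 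The genuine mathematical difficulty is deferred to Sections~\ref{sec:cut-1}--\ref{sec:cutting}, where the cutting functor is constructed and this natural isomorphism is verified.
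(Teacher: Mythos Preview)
Your proof is correct and follows essentially the same approach as the paper: invoke Part~\eqref{natural iso P M Q} of \Cref{natural isos} together with \Cref{prop:bundle quotient functor}, then apply Part~(1) of \Cref{functor chasing}. The concrete unpacking you add is a helpful gloss, but the argument itself is identical to the paper's.
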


\begin{proof}
By Part~\eqref{natural iso P M Q} of \Cref{natural isos}
and by Proposition~\ref{prop:bundle quotient functor},
the bundle quotient functor $\frakP_\invt \to \frakQ_\invt$ 
is naturally isomorphic to the composed functor
$\frakP_\invt \to \frakM_\invt \to \frakQ_\invt$
and is essentially surjective.
By Part~(1) of \Cref{functor chasing},
we conclude that the quotient functor $\frakM_\invt \to \frakQ_\invt$
is essentially surjective.
\end{proof}

\begin{corollary} \labell{what we need 2}
(Assuming Parts~\eqref{natural iso P M Q} and~\eqref{natural iso M P M} 
of \Cref{natural isos},)
the quotient functor $\frakM_\invt \to \frakQ_\invt$
is full (and full on isomorphisms).
\end{corollary}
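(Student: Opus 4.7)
The plan is to deduce this corollary by a direct application of \Cref{functor chasing}, part~(2), to the three functors $\frakP_\invt \to \frakM_\invt$ (cutting), $\frakM_\invt \to \frakQ_\invt$ (quotient), and $\frakP_\invt \to \frakQ_\invt$ (bundle quotient). By \Cref{prop:bundle quotient functor}, the bundle quotient functor is full, and by Part~\eqref{natural iso P M Q} of \Cref{natural isos}, it is naturally isomorphic to the composition $\frakP_\invt \to \frakM_\invt \to \frakQ_\invt$; so the hypothesis of \Cref{functor chasing} concerning the natural isomorphism is in place, and the functor being fed into the role of $\frakP \to \frakQ$ is full.

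The remaining hypothesis of \Cref{functor chasing}~(2) is that the cutting functor $\frakP_\invt \to \frakM_\invt$ is essentially surjective. First I would establish this as a preliminary step from Part~\eqref{natural iso M P M} of \Cref{natural isos}. Given an object $M$ of $\frakM_\invt$, let $P$ be its image under the simultaneous toric radial blowup functor $\frakM_\invt \to \frakP_\invt$. The natural isomorphism between the composition $\frakM_\invt \to \frakP_\invt \to \frakM_\invt$ and the identity functor on $\frakM_\invt$ supplies, in particular at the object $M$, an isomorphism in $\frakM_\invt$ between the image of $P$ under cutting and $M$ itself. So the cutting functor hits every isomorphism class of $\frakM_\invt$, i.e.\ it is essentially surjective.

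With both hypotheses in hand, \Cref{functor chasing}~(2) delivers that $\frakM_\invt \to \frakQ_\invt$ is full. Since $\frakM_\invt$ and $\frakQ_\invt$ already coincide with their own cores (all their arrows are isomorphisms by definition of these categories), fullness automatically gives fullness on isomorphisms, which completes the proof. Note that I do not need Part~\eqref{natural iso P M P} of \Cref{natural isos} for this argument; that part will be useful elsewhere (for the equivalence of categories in \Cref{equiv of cat}), but the fullness conclusion requires only Parts~\eqref{natural iso P M Q} and~\eqref{natural iso M P M}, exactly as stated. The only non-routine ingredient is the construction of the cutting and blowup functors and the verification of the natural isomorphisms in \Cref{natural isos}; that is deferred to the later sections and is the real technical heart of the paper, but it is not part of what must be proved in this corollary.
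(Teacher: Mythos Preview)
Your proposal is correct and follows essentially the same approach as the paper: invoke \Cref{prop:bundle quotient functor} and Part~\eqref{natural iso P M Q} of \Cref{natural isos} to verify the natural-isomorphism and fullness hypotheses of \Cref{functor chasing}(2), derive essential surjectivity of the cutting functor from Part~\eqref{natural iso M P M}, and conclude. Your added remark that fullness and fullness on isomorphisms coincide here because $\frakM_\invt$ and $\frakQ_\invt$ are their own cores is a nice clarification the paper leaves implicit.
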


\begin{proof}
By Part~\eqref{natural iso P M Q} of \Cref{natural isos}
and by Proposition~\ref{prop:bundle quotient functor},
the bundle quotient functor $\frakP_\invt \to \frakQ_\invt$ 
is naturally isomorphic to the composed functor
$\frakP_\invt \to \frakM_\invt \to \frakQ_\invt$
and is full (and full on isomorphisms).
By Part \eqref{natural iso M P M} of \Cref{natural isos},
the cutting functor $\frakP_\invt \to \frakM_\invt$ is essentially surjective.
By Part~(2) (or~(2')) of \Cref{functor chasing}, 
we conclude that the quotient functor $\frakM_\invt \to \frakQ_\invt$
is full (and full on isomorphisms).
\end{proof}

Additionally, by Parts~\eqref{natural iso M P M} 
and~\eqref{natural iso P M P} of \Cref{natural isos},

\begin{corollary} \labell{equiv of cat}
The cutting functor $\frakP_\invt \to \frakM_\invt$
is an equivalence of categories.
\end{corollary}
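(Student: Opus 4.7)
The plan is to invoke the standard characterization of an equivalence of categories in terms of the existence of a quasi-inverse functor, and to read off this quasi-inverse from the data already assembled.

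Specifically, recall that a functor $F \colon \frakA \to \frakB$ is an equivalence of categories if and only if there exists a functor $G \colon \frakB \to \frakA$ together with natural isomorphisms $GF \Rightarrow \Id_{\frakA}$ and $FG \Rightarrow \Id_{\frakB}$. I would apply this with $F$ equal to the cutting functor $\frakP_\invt \to \frakM_\invt$ and with $G$ equal to the simultaneous toric radial blowup functor $\frakM_\invt \to \frakP_\invt$. Part~\eqref{natural iso M P M} of \Cref{natural isos} supplies a natural isomorphism from $GF = (\frakP_\invt \to \frakM_\invt \to \frakP_\invt)$ to $\Id_{\frakP_\invt}$ --- wait, I must be careful about the direction: Part~\eqref{natural iso M P M} says $\frakM_\invt \to \frakP_\invt \to \frakM_\invt$ is naturally isomorphic to $\Id_{\frakM_\invt}$, which is $FG \cong \Id_{\frakM_\invt}$; and Part~\eqref{natural iso P M P} says $\frakP_\invt \to \frakM_\invt \to \frakP_\invt$ is naturally isomorphic to $\Id_{\frakP_\invt}$, which is $GF \cong \Id_{\frakP_\invt}$. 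Both unit and counit natural isomorphisms are therefore in hand.

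With both natural isomorphisms established, the standard characterization immediately yields that the cutting functor is an equivalence of categories, with quasi-inverse given by the simultaneous toric radial blowup functor. No further work is required beyond citing the two relevant parts of \Cref{natural isos}. The proof is essentially a one-line application of the quasi-inverse criterion, so there is no real obstacle at this stage; all the substantive content is hidden in the construction of the two functors and the verification of the natural isomorphisms, which are deferred to the later sections referenced in the proof pointer following \Cref{natural isos}.
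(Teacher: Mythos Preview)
Your proposal is correct and matches the paper's approach exactly: the paper simply prefaces the corollary with ``by Parts~\eqref{natural iso M P M} and~\eqref{natural iso P M P} of \Cref{natural isos}'' and offers no further argument. Your invocation of the quasi-inverse criterion, with the simultaneous toric radial blowup functor as the quasi-inverse and the two natural isomorphisms supplied by those parts, is precisely what the paper intends.
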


\section{The cutting functor, topologically}
\labell{sec:cut-1}

This section contains the first step in constructing the cutting functor
$\frakP_\invt \to \frakM_\invt$:
we construct the cut spaces as topological spaces,
yielding a functor from the category $\frakP_\invt$
to the category of topological $T$-spaces and their equivariant
homeomorphisms.

\begin{construction} \labell{Tx}
Let $\lambdahat \colon \mathring{\del}^1 Q \to \tZhat$
be a unimodular labelling on a manifold-with-corners.
We associate to each $x \in Q$ a sub-torus $T_{(x)} \subset T$, as follows.
Fix a unimodular chart $(\psi,(\eta_1,\ldots,\eta_d))$ (\Cref{def:unimodular})
whose domain contains $x$,
with $\psi \colon U_Q \to \calO \subset \Rplus^n \times \R^m$.
Then $T_{(x)}$ is the torus whose Lie algebra
is spanned by 
$\{ \eta_i \ | \ i \in \{1,\ldots,n\} \text{ and } \psi_i(x) = 0 \}$.
\eoc
\end{construction}

\begin{remark}
In the setup of \Cref{Tx}, the set
$\{ \pm \eta_i \ | \ i \in \{1,\ldots,n\} \text{ and } \psi_i(x) = 0 \}$
is the intersection of the images
$\lambdahat(U \cap \mathring{\del}^1 Q)$
where $U$ runs over all neighbourhoods of $x$.
Thus, the subtorus $T_{(x)}$ is well-defined:
it depends only on $\lambdahat$;
it is independent of the choice of unimodular chart.
\eor
\end{remark}

\begin{construction} \labell{topological cut space}
Let $(P,Q,\lambdahat)$ be a principal $T$-bundle over a decorated
manifold-with-corners.
Define an equivalence relation $\sim$ on $P$, as follows.
For each $p$ and $p'$ in $P$, we say that $p \sim p'$ if 
$\Pi(p) = \Pi(p') =: x$ and $p$ and $p'$ are in the same $T_{(x)}$ orbit,
where $T_{(x)} \subset T$ is the subtorus of \Cref{Tx}.
The \textbf{topological cut space} is the quotient, 
$M := \Pcut := P/{\sim}$, equipped with the quotient topology.
The \textbf{cutting map} is the quotient map $c \colon P \to M$.
\eoc
\end{construction}

\begin{remark} \labell{c not open map}
The cutting map is not an open map.
\eor
\end{remark}

\begin{remark} \labell{cutting on maps}
The $T$-action on $P$ descends to a continuous $T$-action 
on the cut space~$M$.
Moreover, every morphism $P \to P'$ in $\frakP$
descends to an equivariant homeomorphism $M \to M'$
between the corresponding cut spaces.
Thus, the maps $P \mapsto M$ and $(P \to P') \mapsto (M \to M')$
define a functor from the category $\frakP_\invt$
to the category of topological $T$-spaces and their equivariant homeomorphisms.
\eor
\end{remark}

\begin{remark} \labell{UM is UP cut}
Let $\Pi \colon P \to (Q,\lambdahat)$ be a principal $T$-bundle
over a decorated manifold-with-corners, let $M:=\Pcut$,
let $U_Q$ be an open subset of $Q$, let $U_P$ be its preimage in $P$,
and let $U_M$ be its preimage in $M$.
Then $U_M$ coincides with $(U_P)_{\cut}$ as sets, 
and, moreover, as topological $T$ spaces.
\eor
\end{remark}

\noindent

In Section \ref{sec:cutting} we describe a smooth manifold structure
on the topological cut space $M$ with which this functor becomes a functor
from the category $\frakP_\invt$ to the category $\frakM_\invt$. 
In order to do this, we must first understand how to pass
from diffeomorphisms of local models for $P$ 
to diffeomorphisms of local models for $M$; 
we do this in Section~\ref{sec:P to M}.

\section{Diffeomorphisms of models descend: from $P$ to $M$}
\labell{sec:P to M}

In this section we work purely within models.
The main result of this section --- \Cref{descend diffeo from P to M} ---
provides the central technical ingredient that we need
for our construction of the cutting functor in Section \ref{sec:cutting}.
\Cref{descend diffeo from P to M}
is also relevant for Wiemeler's work; see \Cref{rk:wiemeler}.

For any non-negative integers $n,l,m$, we 
consider the model quotient map $\theta$ \eqref{theta} and 
the \textbf{model cutting map} $c^\std$:
\begin{equation} \labell{model maps}
 \xymatrix{
\Rplus^n \times \R^m \times \T^{n+l}
 \ar[rr]^{c^{\std}}
   && \C^n \times \T^l \times \R^m
 \ar[rr]^{\theta}
   && \Rplus^n \times \R^m , 
}
\end{equation}
\begin{multline*}
 (s_1,\ldots,s_n;x_1,\ldots,x_m;b_1,\ldots,b_{n+l})
 \mapsto (z_1,\ldots,z_n;b_{n+1},\ldots,b_{n+l};x_1,\ldots,x_m)  \\
 \mapsto (s_1,\ldots,s_n;x_1,\ldots,x_m) , 
\end{multline*}
given, respectively, by $z_j = \sqrt{s_j}b_j$ and $s_j = |z_j|^2$.
Their composition is the projection map,
$$ \Rplus^n \times \R^m \times \T^{n+l} 
   \xrightarrow{\ \qquad \Pi \qquad \ } \Rplus^n \times \R^m.$$

\begin{proposition} \labell{product to Omega}
Fix non-negative integers $n$, $m$, and $l$.
Let $\calO$ be an open subset of $\Rplus^n \times \R^m$,
and let $\Omega:= \theta^{-1}(\calO) \subset \C^n \times \T^l \times \R^m$.
Let $\sim$ be the equivalence relation on $\calO \times \T^{n+l}$
where $p \sim p'$ iff $\Pi(p)=\Pi(p') =: (s_1,\ldots,s_n;x_1,\ldots,x_m)$
and $p,p'$ differ by an element of the subtorus of $\T^{n+l}$
whose Lie algebra is spanned by the $j$th standard basis elements 
for those $j \in \{1,\ldots,n\}$ for which $s_j=0$.

Then there exists a unique $\T^{n+l}$-action 
on $(\calO \times \T^{n+l})/{\sim}$
such that the quotient map 
$\calO \times \T^{n+l} \to (\calO \times \T^{n+l})/{\sim}$ 
is $\T^{n+l}$-equivariant, and there exists a unique map $\ol{c}^\std$
such that the following diagram commutes.
$$
\xymatrix{
\quad
\calO \times \T^{n+l} \ar[d] \ar@/^/[rrd]^{c^\std} && \\
\ \qquad 
(\calO \times \T^{n+l})/{\sim} \ar[rr]^(.6){\ol{c}^\std} && \Omega .
}
$$
Moreover, $\ol{c}^\std$ is a $\T^{n+l}$-equivariant homeomorphism.
\end{proposition}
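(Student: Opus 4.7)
The plan is to verify the existence and uniqueness claims by routine checks, and then reduce the homeomorphism claim to the statement that $c^{\std}$ is a proper map, hence a quotient map.

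First, I would verify the action and the descent of $c^{\std}$. The natural $\T^{n+l}$-action on $\calO \times \T^{n+l}$ is by (right) multiplication on the second factor; this preserves the fiber structure of $\Pi$, and if $p \sim p'$ differ by an element of the subtorus $T_s \subset \T^{n+l}$ determined by the zero locus $\{j : s_j = 0\}$, then $\lambda \cdot p$ and $\lambda \cdot p'$ also differ by the same element of $T_s$. Hence $\sim$ is $\T^{n+l}$-invariant, and the action descends uniquely to $(\calO \times \T^{n+l})/{\sim}$ making the quotient map equivariant. Similarly, because $\sqrt{s_j}\, b_j = 0$ whenever $s_j = 0$, the map $c^{\std}$ is constant on $\sim$-classes, so by the universal property of the quotient there is a unique continuous map $\ol{c}^{\std} \colon (\calO \times \T^{n+l})/{\sim} \to \Omega$ making the diagram commute; a direct computation in coordinates shows $c^{\std}$ is $\T^{n+l}$-equivariant, so $\ol{c}^{\std}$ is also equivariant.

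Next, I would show $\ol{c}^{\std}$ is a bijection. Given $(z;b';x) \in \Omega$, setting $s_j := |z_j|^2$, $b_{n+k} := b'_k$, and $b_j := z_j/|z_j|$ when $z_j \neq 0$ (and arbitrary otherwise) defines a preimage under $c^{\std}$; this shows surjectivity. For injectivity of $\ol{c}^{\std}$, suppose $c^{\std}(s,x,b) = c^{\std}(s',x',b')$. Then $x = x'$ and $|z_j|^2 = s_j = s'_j$ for each $j$; for indices with $s_j > 0$ we must have $b_j = z_j/\sqrt{s_j} = b'_j$, while for indices with $s_j = 0$ the coordinates $b_j, b'_j$ are unconstrained. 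Combined with $b_{n+k} = b'_{n+k}$, this is precisely the statement $(s,x,b) \sim (s',x',b')$. Hence the fibers of $c^{\std}$ coincide with the $\sim$-classes and $\ol{c}^{\std}$ is a bijection.

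The main step is to show $\ol{c}^{\std}$ is a homeomorphism, for which it suffices to show $c^{\std}$ is a quotient map. I would establish this via properness. The map $c^{\std}$ factors as a product over the indices $j = 1,\ldots,n$ of the polar-type maps $\mu \colon \Rplus \times S^1 \to \C$, $(s,b) \mapsto \sqrt{s}\, b$, together with identity maps on the $\R^m$ and $\T^l$ factors. Each $\mu$ is proper (the preimage of $\{|z| \le R\}$ is the compact set $[0,R^2] \times S^1$), so the full map $\Rplus^n \times \R^m \times \T^{n+l} \to \C^n \times \T^l \times \R^m$ is proper as a product of proper maps. Restricting to $\calO \times \T^{n+l} = \Pi^{-1}(\calO)$ and its image $\Omega = \theta^{-1}(\calO)$ preserves properness. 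Since $\Omega$ is Hausdorff, a proper continuous surjection onto it is a closed map, and a closed continuous surjection is a quotient map. Therefore $\ol{c}^{\std}$, as the induced continuous bijection from the quotient, is a homeomorphism. The main (minor) obstacle is this properness step, but it reduces immediately to the one-variable model $\mu$; no genuinely hard work remains once that observation is made.
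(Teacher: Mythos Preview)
Your proof is correct and follows essentially the same route as the paper: verify that the action and $c^{\std}$ descend, identify the fibres of $c^{\std}$ with the $\sim$-classes to obtain bijectivity, and then use properness of $c^{\std}$ to conclude that the induced bijection is a homeomorphism. The only cosmetic difference is that the paper deduces $\ol{c}^{\std}$ is proper (hence a homeomorphism as a proper continuous bijection) directly from properness of $c^{\std}$, whereas you pass through the equivalent observation that a proper continuous surjection onto a Hausdorff space is a closed map and hence a quotient map.
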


\begin{proof}
Existence of the $\T^{n+l}$-action follows from $\T^{n+l}$ being abelian 
and $\Pi$ being $\T^{n+l}$ invariant.
Uniqueness of the $\T^{n+l}$-action follows from the quotient map being onto.
By the formula for $c^{\std}$, 
the equivalence classes coincide with the level sets of $c^{\std}$.
This implies that there exists a unique map $\ol{c}^\std$
such that the diagram commutes,
and that the map $\ol{c}^\std$ is one-to-one.  
Because $c^{\std}$ is onto, $\ol{c}^{\std}$ is onto,
so $\ol{c}^\std$ is a bijection.
Because $c^\std$ is continuous and proper, so is $\ol{c}^\std$.
Being a continuous and proper bijection, $\ol{c}^\std$ is a homeomorphism.
Finally, because $c^{\std}$ is equivariant, 
so is $\ol{c}^\std$.
\end{proof}

\begin{proposition} \labell{descend diffeo from P to M}
Fix non-negative integers $n,l,m,n',l',m'$ such that
$n+l=n'+l'=:d$ and $n+m = n'+m'$
and a non-negative integer $k$ such that $k \leq \min\{n,n'\}$.
Let
$$ \rho \colon \T^d \to \T^d $$
be an isomorphism of Lie groups 
that fixes $\T^k \times \{1\}^{d-k}$.
Fix open subsets
$$ \calO \subset \Rplus^n \times \R^m
\quad \text{ and } \quad
   \calO' \subset \Rplus^{n'} \times \R^{m'} $$
that are contained, respectively, in $\Rplus^k \times \Rpos^{n-k} \times \R^m$
and in $\Rplus^k \times \Rpos^{n'-k} \times \R^{m'}$.
Fix a $\rho$-equivariant diffeomorphism
$$ G \colon \calO \times \T^d \to \calO' \times \T^d$$
that, for each $1 \leq j \leq k$,
takes the intersection of $\calO \times \T^d$
with the $j$th facet $\{ s_j=0 \}$ of its ambient space
to the intersection of $\calO' \times \T^d$
with the $j$th facet $\{ s_j=0 \}$ of its ambient space.
Let 
$$ \Omega \ \subset \ \C^n \times \T^l \times \R^m 
\qquad \text{ and } \qquad
   \Omega' \ \subset \ \C^{n'} \times \T^{l'} \times \R^{m'} $$
be the preimages of $\calO$ and of $\calO'$
under the respective model quotient maps~$\theta$.
Then there exist a unique map $f \colon \Omega \to \Omega'$
and a unique map $g \colon \calO \to \calO'$
such that the following diagram commutes:
$$ \xymatrix{
   \calO \times \T^d
       \ar[d]^{c^{\std}} \ar[rr]^{G} 
   && \calO' \times \T^d
              \ar[d]^{{c}^{\std}}  \\
\Omega \ar[d]^{\theta} \ar[rr]^{f} && \Omega' \ar[d]^{\theta} \\
 \calO \ar[rr]^{g} && \calO' .
} $$
Moreover, $f$ is a $\rho$-equivariant diffeomorphism,
and $g$ is a diffeomorphism.
\end{proposition}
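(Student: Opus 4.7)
The plan is to descend $G$ through the cutting maps $c^{\std}$ to obtain $f$, and then descend $f$ through the quotient maps $\theta$ to obtain $g$, invoking Proposition~\ref{product to Omega} for the first step and Corollary~\ref{f to g} for the second. Uniqueness of both $f$ and $g$ will be immediate from the surjectivity of $c^{\std}$ and of $\theta$.

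I would first extract the structure that $\rho$-equivariance forces on $G$. Because the $\T^d$-action on $\calO \times \T^d$ is only on the second factor, $\rho$-equivariance of $G$ forces the form $G(s, b) = (g(s), \rho(b)\cdot \nu(s))$ for a smooth diffeomorphism $g \colon \calO \to \calO'$ and a smooth map $\nu \colon \calO \to \T^d$; facet preservation says that $g$ takes $\{s_j = 0\}$ into $\{s_j = 0\}$ for each $j \leq k$. Next, I would show $G$ respects the equivalence relation $\sim$ of Proposition~\ref{product to Omega}. Since $\calO \subset \Rplus^k \times \Rpos^{n-k} \times \R^m$, the subtorus $T_s$ controlling $\sim$ at $s$ is generated by the standard basis vectors indexed by $j \leq k$ with $s_j = 0$, and in particular $T_s \subset \T^k \times \{1\}^{d-k}$; facet preservation yields $T_s = T_{g(s)}$. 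Since $\rho$ restricts to the identity on $\T^k \times \{1\}^{d-k}$, combining with equivariance shows $G$ sends each $\sim$-class into a single $\sim$-class, so Proposition~\ref{product to Omega} yields a unique continuous $\rho$-equivariant map $f$.

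The main work is to show that $f$ is smooth. Hadamard's lemma applied to $g_j$ for $j \leq k$ yields $g_j(s) = s_j \widetilde{g}_j(s)$ with $\widetilde{g}_j$ smooth; because $g$ is a diffeomorphism of manifolds-with-corners, $\widetilde{g}_j > 0$. Let $A \in GL_d(\Z)$ be the matrix of $\rho$ on $\tZ$. The hypothesis that $\rho$ fixes $\T^k \times \{1\}^{d-k}$ pointwise forces $A$ to have the block form with identity upper-left $k \times k$ block and zero lower-left block, so $\rho(b)_j$ for $j \leq k$ equals $b_j$ times monomials in the $b_i$ with $i > k$, while $\rho(b)_j$ for $j > k$ involves only the $b_i$ with $i > k$. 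The apparent singularity in the $j$-th complex output of $f$ for $j \leq k$, namely $\sqrt{g_j(s)} \cdot \rho(b)_j \cdot \nu_j(s)$, then factors as $\sqrt{\widetilde{g}_j(s)} \cdot (\sqrt{s_j}\, b_j) \cdot \prod_{i > k} b_i^{A_{j,i}} \cdot \nu_j(s)$, and the identity $\sqrt{s_j}\, b_j = z_j$ exhibits this as a product of factors smooth in $(z, b', x)$: the $b_i$ for $k < i \leq n$ equal $z_i/|z_i|$ (smooth because $s_i > 0$ on $\calO$), the $b_i$ for $i > n$ are directly the $b'$-coordinates, and $\nu$ is smooth in $s = (|z|^2, x)$. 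The remaining output coordinates are smooth by the same considerations, with $\sqrt{g_j(s)}$ smooth for $k < j \leq n'$ because $g_j(s) > 0$ there.

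Applying the entire argument to $G^{-1}$, which is $\rho^{-1}$-equivariant and also preserves the first $k$ facets, yields a smooth inverse to $f$, so $f$ is a $\rho$-equivariant diffeomorphism. Corollary~\ref{f to g} then produces the unique smooth $g$ descended from $f$ and, applied to $f^{-1}$, shows that $g$ is a diffeomorphism. The main obstacle is the smoothness step: $c^{\std}$ admits no smooth local sections through boundary points (a section would require $b_j = z_j/|z_j|$ at $z_j = 0$), so $f$ cannot be obtained simply by precomposing $G$ with a smooth section; instead all three hypotheses must be used simultaneously, via the Hadamard factorization and the identity $\sqrt{s_j}\, b_j = z_j$, for the apparent singularity to cancel exactly.
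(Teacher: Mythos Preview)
Your proposal is correct and follows essentially the same approach as the paper: extract the form $G(s,x;\tau)=(g(s,x);\rho(\tau)\cdot\nu(s,x))$ from $\rho$-equivariance, apply Hadamard's lemma to get $g_j=s_j\widetilde{g}_j$ with $\widetilde{g}_j>0$ for $j\leq k$, use the block form of $\rho$ forced by fixing $\T^k\times\{1\}^{d-k}$, and then observe that the potentially singular factor $\sqrt{s_j}\,b_j$ is exactly $z_j$. The paper carries out the same computation, only more explicitly and without routing through Proposition~\ref{product to Omega} or Corollary~\ref{f to g}; your use of those results streamlines the existence/uniqueness bookkeeping but the analytic core is identical. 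One small point you glide over: the positivity of $\widetilde{g}_j$ along the facet $\{s_j=0\}$ is not automatic from ``$g$ is a diffeomorphism of manifolds-with-corners'' and deserves a line---the paper argues that on the facet $\widetilde{g}_j$ equals $\partial_{s_j}g_j$, which cannot vanish since the other partials of $g_j$ do.
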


\begin{proof}
We write coordinates on $\T^d$ as
$$ \tau = (a,b) \quad \text{ with } \ \
   a = (a_1,\ldots,a_k), \ \text{ and } \ 
   b=(b_{k+1},\ldots,b_{d}). $$
Because the isomorphism $ \rho \colon \T^d \to \T^d $
fixes $\T^k \times \{1\}^{d-k}$, we have
$$ \rho (a,b) = \left( 
a_1 \rho_1(b),\, \ldots,\, a_k \rho_k(b),\, 
    \rho_{k+1}(b),\, \ldots,\, \rho_d(b) \, \right)$$
for some homomorphisms $\rho_j \colon \T^{d-k} \to S^1$. 

Because $G$ is $\rho$-equivariant, we can write
$$ G(s,x;\tau) \ = \ \,(\, g(s,x) \, ; \, \rho(\tau) \cdot A(s,x) \, ) 
 \quad \text{ for } (s,x) \in \calO \text{ and } \tau \in \T^d,$$
for some maps $g \colon \calO \to \calO'$ and $A \colon \calO \to \T^d$.
Because $G$ is smooth, $g$ and $A$ are smooth;
because $G$ is a diffeomorphism, $g$ is a diffeomorphism.

We write coordinates on $\calO$ as
$$ (s,x) \quad \text{ with } \ 
 s = (s_1,\ldots,s_n) \quad \text{ and } \ x = (x_1,\ldots,x_m).$$
By assumption, $s_1,\ldots,s_k$ are non-negative, and 
$s_{k+1},\ldots,s_{n}$ are strictly positive.
We write 
$$ g(s,x) = (s'_1(s,x),\ldots,s'_{n'}(s,x);x'_1(s,x),\ldots,x'_{m'}(s,x)),$$
where $s'_1,\ldots,s'_{n'}, x'_1,\ldots,x'_{m'}$ are smooth real-valued
functions on $\calO$,
with $s'_1,\ldots,s'_k$ non-negative
and $s'_{k+1},\ldots,s'_{n'}$ strictly positive.
Because 
for each $1 \leq j \leq k$ the diffeomorphism $g \colon \calO \to \calO'$ 
takes the facet $\{ s_j = 0 \}$ of $\calO$
to the facet $\{ s'_j = 0 \}$ of $\calO'$, 
by Hadamard's lemma\footnote{\labell{fn:hadamard} $s_j'(s,x)
 = \int_0^1 (\frac{d}{dt}s_j'(ts,x)) dt
 = \int_0^1 s \del_1(s_j')(ts,x) dt = s h_j(s,x)$
where $h_j = \int_0^1 \del_1(s_j')(ts,x) dt$.
The first equality is by the fundamental theorem of calculus;
the second is by the chain rule.
}
we can write
\begin{equation} \labell{hadamard}
 s'_j(s,x) = s_j h_j(s,x) \quad \text{ for all } 1 \leq j \leq k ,
\end{equation}
for smooth functions $h_j \colon \calO \to \R$.
Outside the $j$th facet $\{s_j=0\}$, we have $s_j > 0$ and $s'_j(s,x) > 0$,
so $h_j(s,x)$ is positive. 
By continuity, $h_j$ is non-negative along the $j$th facet.
But along the $j$th facet the partial derivatives of $s'_j(s,x)$
with respect to the coordinates other than $s_j$ vanish,
so the partial derivative of $s'_j(s,x)$ with respect to the $s_j$ variable
cannot vanish (because $g$ is a diffeomorphism). 
This partial derivative is $h_j$, so $h_j$ is non-vanishing 
along the $j$th facet.  We conclude that $h_j$ is positive everywhere.

We write the smooth map $A \colon \calO \to \T^d$ as 
$ A(s,x) = (A_1(s,x),\ldots,A_d(s,x)) $.  
The map $G \colon \calO \times \T^d \to \calO' \times \T^d$ 
then becomes
\begin{multline*}
 G(s,x;a,b) \ =  \ \\ \,(\, 
s_1 h_1(s,x), \, \ldots, \, s_k h_k(s,x) , \ 
s'_{k+1}(s,x), \, \ldots, \,s'_{n'}(s,x) \ ; 
x'_{1}(s,x), \, \ldots, \,x'_{m'}(s,x) \ ; \\
a_1\rho_1(b) A_1(s,x), \ \ldots, \ a_k\rho_k(b) A_k(s,x) \ ; 
\rho_{k+1}(b) A_{k+1}(s,x), \ \ldots, \ \rho_{d}(b) A_{d}(s,x) \ 
\, ) \,.
\end{multline*}
The model cutting map 
$c^\std \colon \Rplus^{n'} \times \R^{m'} \times \T^d
 \to \C^{n'} \times \T^{l'} \times \R^{m'}$ is
\begin{multline*}
 {c}^{\text{std}} (s'_1,\ldots,s'_{n'};x'_1,\ldots,x'_{m'};
a'_1,\ldots,a'_{k}, b'_{k+1},\ldots,b'_{d}) = \\
(\sqrt{s'_1}a'_{1},\ldots,\sqrt{s'_k}a'_k,
 \sqrt{s'_{k+1}}b'_{k+1},\ldots,\sqrt{s'_{n'}}b'_{n'};\,
   b'_{n'+1},\ldots,b'_{d};x'_{1'},\ldots,x'_{m'}).
\end{multline*}
Composing $G$ with this model cutting map, we obtain
\begin{multline} \labell{composing}
{c}^\text{std} (G (s,x;a,b)) = \\
\Big(
 \sqrt{s_1 h_1(s,x)} a_1 \rho_1(b) A_1(s,x), \ldots,
 \sqrt{s_k h_k(s,x)} a_k \rho_k(b) A_k(s,x), \\
 \sqrt{s'_{k+1}(s,x)} \rho_{k+1}(b) A_{k+1}(s,x), \ldots,
 \sqrt{s'_{n'}(s,x)} \rho_{n'}(b) A_{n'}(s,x); \\
 \rho_{n'+1}(b) A_{n'+1}(s,x), \ldots, \rho_{d}(b) A_{d}(s,x),  \\
 x'_1(s,x),\ldots,x'_{m'}(s,x) \Big) .
\end{multline}
In order for this composition to be equal to $f(c^{\text{std}}(s,x;a,b))$,
where here the model cutting map is 
$c^\std \colon \Rplus^{n} \times \R^{m} \times \T^d
 \to \C^{n} \times \T^{l} \times \R^{m}$, 
it is necessary and sufficient that $f \colon \Omega \to \Omega'$ 
will satisfy
\begin{multline*}
f(\sqrt{s_1}a_1,\ldots,\sqrt{s_k}a_k,
  \sqrt{s_{k+1}}b_{k+1},\ldots,\sqrt{s_n}b_n;
 b_{n+1},\ldots,b_{d};x_1,\ldots,x_m) \\
 = \text{ the right hand side of \eqref{composing} }.
\end{multline*}
Because for each $1 \leq j \leq k$
the right hand side of \eqref{composing} is independent of $a_j$ when $s_j=0$,
and because $s_{k+1},\ldots,s_{n}$ never vanish on $\calO$,
there exists a unique such $f \colon \Omega \to \Omega'$,
which we will now describe.

We write coordinates on $\Omega$ as
$$ (z,c,x) \quad \text{ where } \quad
 z = (z_1,\ldots,z_n) , \quad 
 c = (b_{n+1},\ldots,b_d) , \quad 
 x = (x_1,\ldots,x_m) $$
and on $\Omega'$ as
$$ (z',c',x') \quad \text{ where } \quad
 z' = (z'_1,\ldots,z'_{n'}) , \quad 
 c' = (b'_{n'+1},\ldots,b'_{d}) , \quad 
 x' = (x'_1,\ldots,x'_{m'}) .$$
By assumption, $z_{k+1},\ldots,z_n$ and $z'_{k+1},\ldots,z'_{n'}$ never vanish.
\begin{itemize}
\item
The last $m'$ components of $f \colon \Omega \to \Omega'$ are
$$x'_j(s,x), \quad \text{ for $1 \leq j \leq m'$}, $$
where $s = (|z_1|^2,\ldots,|z_n|^2)$,
so they are smooth as functions of $(z,c,x) \in \Omega$.
\end{itemize}
\begin{itemize}
\item
The middle $l'$ components of $f \colon \Omega \to \Omega'$ are
$$ \rho_{j}(b) A_{j}(s,x) 
   \quad \text{ for }  n'+1 \leq j \leq d,$$
with $s$ as before
and $b = (\dfrac{z_{k+1}}{|z_{k+1}|},\ldots,\dfrac{z_{n}}{|z_{n}|},
b_{n+1},\ldots,b_{d})$;
these are well defined and smooth as functions of $(z,c,x) \in \Omega$ 
because $z_{k+1},\ldots,z_n$ never vanish on $\Omega$
and $b_{n+1},\ldots,b_d$ are the components of $c$.
\item
The first $k$ components of $f$ are
$$
\sqrt{s_j h_j(s,x)} a_j \rho_j(b) A_j(s,x) 
 = z_j \sqrt{h_j(s,x)} \rho_j(b) A_j(s,x) 
\quad \text{ for } 1 \leq j \leq k,
$$
with $s$ and $b$ as before;
these are smooth as functions of $(z,c,x) \in \Omega$
because $h_1,\ldots,h_k$ are strictly positive.
\item
The remaining $n'-k$ components of $f$ are
$$ \sqrt{s'_j(s,x)} \rho_j(b) A_j(s,x)
\quad \text{ for } k+1 \leq j \leq n'$$
with $s$ and $b$ as before;
these are smooth as functions of $(z,c,x) \in \Omega$
because $s'_{k+1},\ldots,s'_n$ are strictly positive.
\end{itemize}
\end{proof}

\section{The cutting functor, smoothly}
\labell{sec:cutting}

In this section we complete the construction of the cutting functor
$\frakP_\invt \to \frakM_\invt$,
from the category of principal $T$-bundles
over decorated manifolds-with-corners,
to the category of locally standard $T$-manifolds,
and we will construct a natural isomorphism from its composition 
$\frakP_\invt \to \frakM_\invt \to \frakQ_\invt$
with the quotient functor
$\frakM_\invt \to \frakQ_\invt$
to the bundle quotient functor $\frakP_\invt \to \frakQ_\invt$.

In Section~\ref{sec:cut-1} we described the cutting functor topologically,
as a functor from $\frakP_\invt$
to the category of topological $T$-spaces and their equivariant homeomorphisms.
It remains to assign, for every object $P$ of $\frakP_\invt$,
a (smooth) manifold structure on the corresponding topological cut space $M$,
such that the continuous $T$-action on $M$ is smooth, 
and such that for every morphism $P \to P'$ in $\frakP_\invt$,
the induced equivariant homeomorphism $M \to M'$ is a diffeomorphism.
We will do this using charts.
(Warning: the quotient map $c \colon P \to M$ will not be smooth!
It will be modelled on the map 
that takes $(e^{i\theta},s) \in S^1 \times \Rplus$
to $\sqrt{s}e^{i\theta} \in \C$.)

We introduce a topological analogue of \Cref{def:Tchart}:

\begin{definition} \labell{def:Tchart v2}
Let $M$ be a topological $T$-space.
A \textbf{topological locally standard $T$-chart} on $M$
is a $\rho_{\alpha_1,\ldots,\alpha_d}$-equivariant homeomorphism 
$$ U_M \xrightarrow{\phi} \Omega $$
from a $T$-invariant open subset $U_M$ of $M$
to a $\T^d$-invariant open subset $\Omega$ of $\C^n \times \T^l \times \R^m$,
for some non-negative integers $n$ and $l$ such that $n+l=d$
and for some basis $\alpha_1,\ldots,\alpha_d$ of the weight lattice $\tZ^*$.
Two such charts
$$\phi' \colon U_M' \to \Omega'
\quad \text{ and } \quad
  \phi'' \colon U_M'' \to \Omega'',$$
are \textbf{smoothly compatible} if the composition 
$\phi'' \circ {\phi'}^{-1}$,
is a \emph{diffeomorphism} (and not only a homeomorphism)
from the open subset $\phi'( U_M' \cap U_M'' )$ of $\Omega'$  
to the open subset $\phi''( U_M' \cap U_M'' )$ of $\Omega''$.
\end{definition}

{
\begin{proposition} \labell{cut smooth}
Let $(P,Q,\lambdahat)$ be a principal $T$-bundle
over a decorated manifold-with-corners, 
and let $M=\Pcut$ be the corresponding (topological) cut space.
Fix an open subset $U_Q$ of $Q$, 
its preimages $U_P$ and $U_M$ in $P$ and in $M$, a unimodular chart
$$ (\psi,(\eta_1,\ldots,\eta_{n+l})),
\quad \psi \colon U_Q \to \calO \ \subset \Rplus^n \times \R^m $$
with $n+l=d =: \dim T$, and a trivialization 
\begin{equation} \labell{trivialization}
 U_P \xrightarrow{\cong} U_Q \times T
\end{equation}
of the principal bundle $P$ over $U_Q$.
Let $\alpha_1,\ldots,\alpha_{d}$ be the basis of $\tZ^*$ 
that is dual to the basis $\eta_1,\ldots,\eta_{d}$ of $\tZ$.
Let 
$ \tpsi \colon U_P \to \calO \times \T^d $
be the composition of the trivialization \eqref{trivialization} with the map 
$ \psi \times \rho_{\alpha_1,\ldots,\alpha_d} 
 \colon U_Q \times T \to \calO \times \T^d $.
Then there exists a unique map $\phi \colon U_M \to \Omega$ with which 
the following diagram commutes
$$ \xymatrix{
 P \ \supset & U_P \ar[r]^{\tpsi} \ar[d]_{c} 
   & **[r] {\calO \times \T^{n+l}} 
      \ar[d]^{c^{\std}} 
   & \subset \ \Rplus^n \times \R^m 
     \times \T^{n+l} \\
 M \ \supset & U_M \ar[d]_{\pi} \ar[r]^{\phi} & \Omega \ar[d]^{\theta} & 
 \subset \C^n \times \T^l \times \R^m  \\
 Q \ \supset & U_Q \ar[r]^{\psi} & \calO & \subset \ \Rplus^n \times \R^m\,,
} $$
and this map~$\phi$ is a topological locally standard $T$-chart.
Moreover, every two topological locally standard $T$-charts on $M$
that are obtained in this way are smoothly compatible. 
\end{proposition}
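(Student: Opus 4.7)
The plan is to first construct $\phi$ by descending the continuous map $c^{\std}\circ\tpsi\colon U_P\to\Omega$ through the cutting quotient $c\colon U_P\to U_M$, and then to establish smooth compatibility of two such charts by reducing to \Cref{descend diffeo from P to M}.

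For the existence and uniqueness of $\phi$, the composition $c^{\std}\circ\tpsi$ is continuous and $\rho_{\alpha_1,\ldots,\alpha_d}$-equivariant. By \Cref{stab etaj}, the inverse isomorphism $\rho_{\alpha_1,\ldots,\alpha_d}^{-1}\colon\T^d\to T$ sends the $j$-th standard subcircle of $\T^d$ to the subcircle of $T$ generated by $\eta_j$; combined with the evident description of the fibres of $c^{\std}$, this shows that the fibre of $c^{\std}\circ\tpsi$ through a point $p\in U_P$ lying above $u\in U_Q$ is the $T_{(u)}$-orbit of $p$ (with $T_{(u)}$ as in \Cref{Tx}), which is precisely the fibre of $c$ through $p$. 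Hence $c^{\std}\circ\tpsi$ factors uniquely through $c$, defining $\phi$. The map $\phi$ is continuous by the universal property of the quotient topology, $\rho_{\alpha_1,\ldots,\alpha_d}$-equivariant because $\tpsi$ and $c^{\std}$ are, and a homeomorphism onto $\Omega$ by combining the analogous property of $\tpsi$ with \Cref{product to Omega}.

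For smooth compatibility, consider two charts $\phi$ and $\phi'$ arising from such data. Since smooth compatibility is local, fix a point $y_0\in\phi(U_M\cap U_M')$ and work in a neighbourhood; let $k$ be the depth of $x_0:=\theta(y_0)\in\calO$. By permuting the first $n$ coordinates of $\psi$ together with $\eta_1,\ldots,\eta_n$ (and similarly the first $n'$ coordinates of $\psi'$ together with $\eta_1',\ldots,\eta_{n'}'$), we may arrange that the $k$ facets of $U_Q$ through $\psi^{-1}(x_0)$ are indexed $1,\ldots,k$ in both charts and that the $j$-th facet of one agrees with the $j$-th facet of the other for $j\le k$. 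The common unimodular label on the $j$-th facet then forces $\eta_j=\pm\eta_j'$. Each sign flip of $\eta_j'$ changes $\phi'$ by post-composition with complex conjugation on the $j$-th $\C$-factor, a smooth self-diffeomorphism of the ambient model of $\Omega'$; so by flipping signs we may further reduce to $\eta_j=\eta_j'$ for all $j\le k$. Under these normalizations, \Cref{stab etaj} implies that $\rho:=\rho_{\alpha_1',\ldots,\alpha_d'}\circ\rho_{\alpha_1,\ldots,\alpha_d}^{-1}$ fixes $\T^k\times\{1\}^{d-k}$ pointwise, and the transition $G:=\tpsi'\circ\tpsi^{-1}$ is a smooth $\rho$-equivariant diffeomorphism preserving the first $k$ facets. \Cref{descend diffeo from P to M} then produces a diffeomorphism $f\colon\Omega\to\Omega'$ descending $G$ through $c^{\std}$; applying the commutative square of the present proposition on both sides forces $\phi'\circ\phi^{-1}=f$, yielding smooth compatibility.

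The main obstacle is the bookkeeping needed to match facet indices and signs of the $\eta_j$'s between the two unimodular charts so as to satisfy the precise hypotheses of \Cref{descend diffeo from P to M}---particularly the pointwise fixing of $\T^k\times\{1\}^{d-k}$ by $\rho$. The enabling observation is that permutations of the first $n$ coordinates and complex conjugations on individual $\C$-factors are smooth self-diffeomorphisms of $\C^n\times\T^l\times\R^m$, so smooth compatibility is invariant under such adjustments and need only be verified in the normalized setting.
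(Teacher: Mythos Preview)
Your proof is correct and follows essentially the same approach as the paper's: construct $\phi$ by identifying the fibres of $c$ with those of $c^{\std}\circ\tpsi$ (the paper packages this via \Cref{UM is UP cut} and \Cref{product to Omega}), and for compatibility normalize the two charts by coordinate permutations and sign flips on the first $k$ factors so that \Cref{descend diffeo from P to M} applies. The only cosmetic difference is that you localize at a point of depth $k$ while the paper restricts to the common domain and then normalizes globally; the content is the same.
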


\begin{proof}
\Cref{UM is UP cut} and \Cref{product to Omega}
imply that $\phi$ exists, is unique,
is $\rho_{\alpha_1,\ldots,\alpha_d}$-equivariant, and is a homeomorphism.

Let $\phi$ and $\phi'$ be two locally standard $T$-charts
that are obtained in this way.
Restricting to the intersection of their domains, we may assume
that they have the same domain, $U_M$.
So we have diagrams 
$$ \xymatrix{
 U_P \ar[r]^{\tpsi} \ar[d]^{c} 
   & **[r] {\calO \times \T^d } 
      \ar[d]^{c^{\std}} \\ 
 U_M \ar[d]^{\pi} \ar[r]^{\phi} & \Omega \ar[d]^{\theta}  \\
 U_Q \ar[r]^{\psi} & \calO 
} \qquad \text{ and } \qquad
   \xymatrix{
 U_P \ar[r]^{\tpsi'} \ar[d]^{c} 
   & **[r] {\calO' \times \T^d } 
      \ar[d]^{c^{\std}} \\ 
 U_M \ar[d]^{\pi} \ar[r]^{\phi'} & \Omega' \ar[d]^{\theta}  \\
 U_Q \ar[r]^{\psi'} & \calO'
} $$
where $\tpsi$ is a $\rho$-equivariant diffeomorphisms
and $\tpsi'$ is a $\rho'$-equivariant diffeomorphisms
for isomorphisms 
$$\rho = \rho_{\alpha_1,\ldots,\alpha_d} \colon T \to \T^d 
\quad \text{ and } \quad
  \rho' = \rho_{\alpha'_1,\ldots,\alpha'_d} \colon T \to \T^d,$$
and where
$\phi$ is a $\rho$-equivariant homeomorphisms
and $\phi'$ is a $\rho'$-equivariant homeomorphisms.
Combining these, we obtain the following diagram,
in which $G := \tpsi' \circ {\tpsi}^{-1}$
is a $T$-equivariant diffeomorphism,
$g:= \psi' \circ {\psi}^{-1}$ is a diffeomorphism,
and $f:= \phi' \circ {\phi}^{-1}$ is a $T$-equivariant homeomorphism.
$$ \xymatrix{
   **[l] \calO \times \T^d 
       \ar[d]^{c^{\std}} \ar[r]^{G} 
   & **[r] \calO' \times \T^d 
              \ar[d]^{{c'}^{\std}} 
\\
\Omega \ar[d]^{\theta} \ar[r]^{f} & \Omega' \ar[d]^{\theta'} 
\\
 \calO \ar[r]^{g} & \calO' 
} $$
Here,
$$ \calO \subset \Rplus^n \times \R^m
\quad \text{ and } \quad
   \calO' \subset \Rplus^{n'} \times \R^{m'}, $$
and 
$$ \Omega \subset 
\C^n \times \T^{l} \times \R^m 
\quad \text{ and } \quad
   \Omega' \subset 
\C^{n'} \times \T^{l'} \times \R^{m'} .$$

After replacing the three models on the left column of our diagram
by models that differ from them by a permutation of the coordinates,
we may assume that 
the set $\calO$ meets the first $k$ facets
of the model $\Rplus^{n} \times \R^m$
and does not meet any of its other facets. 

After replacing the three models on the right column of our diagram
by models that differ from them by a permutation of the coordinates,
we may assume that
the set $\calO'$ also meets the first $k$ facets
of its model $\Rplus^{n'} \times \R^{m'}$
and does not meet any of its other facets,
and moreover that for each $1 \leq j \leq k$
the map $g$ takes the intersection of $\calO$ with the $j$th facet
of its model
to the intersection of $\calO'$ with the $j$th facet of its model.

After further replacing the three models
on the right column by models that differ from them
in the signs of some of the first $k$ weights, 
we may assume that
for all $1 \leq j \leq k$ we have $\eta_j = \eta_j'$,
where $\eta_1,\ldots,\eta_d$ is the dual basis to $\alpha_1,\ldots,\alpha_d$
and $\eta_1',\ldots,\eta_d'$ is the dual basis to $\alpha_1',\ldots,\alpha_d'$.
Then the isomorphism
$$ \rho_{\T^d} := 
\rho_{\alpha'_1,\ldots,\alpha'_d} \circ \rho_{\alpha_1,\ldots,\alpha_d}^{-1} 
\colon \T^d \to \T^d .$$
fixes $\T^k \times \{ 1 \}^{d-k}$, and the maps 
$$ G \colon \calO \times \T^d \to \calO' \times \T^d $$
and 
$$ f \colon \Omega \to \Omega' $$
are $\rho_{\T^d}$-equivariant.

By Proposition~\ref{descend diffeo from P to M},
the map $f \colon \Omega \to \Omega'$ is a diffeomorphism.
\end{proof}
}

\begin{corollary} \labell{cut smooth revised}
Let $P$ be a principal $T$-bundle over a decorated manifold-with-corners, 
and let $M:=\Pcut$ be the corresponding topological cut space
(\Cref{topological cut space}).
Then there exists a unique (smooth) manifold structure on $M$
with which the maps $\phi$ of \Cref{cut smooth} are diffeomorphisms.
With this structure, $M$ is a locally standard $T$-manifold.

Let $\tilde{\psi} \colon P \to P'$ be an equivariant diffeomorphism 
of principal $T$-bundles over decorated manifold-with-corners,
and let $\phi \colon M \to M'$ 
be the induced equivariant homeomorphism
on the cut spaces (\Cref{cutting on maps}).
Then $\phi$ is smooth. 
\end{corollary}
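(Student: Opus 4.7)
My plan has two parts, mirroring the two assertions of the corollary. First I would assemble the manifold structure on the topological cut space $M$ from the charts furnished by \Cref{cut smooth}, and then I would deduce smoothness of induced maps by appealing to \Cref{descend diffeo from P to M}.

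For the first assertion, I would cover the base $Q$ by domains $U_Q$ of unimodular charts $(\psi,(\eta_1,\ldots,\eta_d))$, chosen small enough that the principal $T$-bundle $P$ trivialises over each. Together with a choice of such trivialisation, \Cref{cut smooth} produces a topological locally standard $T$-chart $\phi \colon U_M \to \Omega$ onto a $\T^d$-invariant open subset $\Omega$ of $\C^n \times \T^l \times \R^m$. These charts cover $M$ and are pairwise smoothly compatible by \Cref{cut smooth}. Applying \Cref{mfld lemma1} to the continuous map $\pi \colon M \to Q$ with this open cover of $Q$ and the homeomorphisms $\phi$, whose targets are open subsets of $\C^n \times \T^l \times \R^m$ and hence manifolds-with-corners, yields a unique smooth manifold structure on $M$ with which every such $\phi$ is a diffeomorphism. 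The Hausdorff and second countable hypotheses on $Q$ are built into its being a manifold-with-corners. With this structure the $T$-action is smooth, because each $\phi$ conjugates it $\rho_{\alpha_1,\ldots,\alpha_d}$-equivariantly to the standard $\T^d$-action on $\C^n \times \T^l \times \R^m$, and $M$ is locally standard by the very definition of these charts.

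For the second assertion, fix a morphism $\tilde{\psi} \colon P \to P'$ of $\frakP_\iso$ and let $\psi \colon Q \to Q'$ be the induced decoration-preserving diffeomorphism. Smoothness of the equivariant homeomorphism $\phi \colon M \to M'$ is local, so I would fix $x \in M$, set $q := \pi(x)$ and $q' := \psi(q)$, and start with a unimodular chart $(\varphi,(\eta_1,\ldots,\eta_d))$ on $Q'$ around $q'$. Pulling it back by $\psi$ gives a unimodular chart on $Q$ around $q$ with the same basis $(\eta_j)$, since $\psi$ intertwines the unimodular labellings. After shrinking so that $P$ and $P'$ are trivial over the corresponding domains and selecting trivialisations, $\tilde{\psi}$ is presented, in the resulting charts of \Cref{cut smooth}, as a $T$-equivariant diffeomorphism $G \colon \calO \times \T^d \to \calO' \times \T^d$ lying over a diffeomorphism $g$ of base charts that maps relevant facets bijectively onto the corresponding ones. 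A permutation of coordinates and a choice of signs on the $\alpha_j$ --- the same bookkeeping carried out in the proof of \Cref{cut smooth} --- places us in the hypotheses of \Cref{descend diffeo from P to M}, which then supplies a diffeomorphism $f \colon \Omega \to \Omega'$ presenting $\phi$ in these charts, whence $\phi$ is smooth near $x$. The main obstacle has already been isolated in \Cref{descend diffeo from P to M}; with that in hand, the present corollary reduces to the careful alignment of unimodular charts, local trivialisations, and splittings of $T$ so that the hypotheses of \Cref{descend diffeo from P to M} are literally satisfied.
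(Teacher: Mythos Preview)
Your argument for the first assertion is essentially identical to the paper's: cover $Q$ by domains of unimodular charts over which $P$ trivialises, invoke \Cref{cut smooth} for pairwise smooth compatibility, and apply \Cref{mfld lemma1}.

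For the second assertion your approach is correct but more laborious than the paper's. You pull back a unimodular chart from $Q'$ via $\psi$, then choose independent trivialisations of $P$ and $P'$, obtain a nontrivial $G \colon \calO \times \T^d \to \calO' \times \T^d$, and re-invoke \Cref{descend diffeo from P to M} (essentially re-running the compatibility argument already absorbed into \Cref{cut smooth}). The paper instead pushes forward \emph{both} the unimodular chart and the local trivialisation of $P$ through $\psi$ and $\tilde{\psi}$ to obtain the data on the $P'$ side; with these matched choices, the map $\phi|_{U_M}$ becomes literally the identity $\Omega \to \Omega$ in the resulting charts, so smoothness is immediate without a second appeal to \Cref{descend diffeo from P to M}. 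Your route has the virtue of making explicit that the conclusion is robust under independent choices of trivialisation, while the paper's route is shorter and avoids the coordinate bookkeeping of aligning facets and signs.
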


\begin{proof}
Fix $P$.
By \Cref{def:unimodular} and because $P \to Q$ is locally trivializable,
we can cover $Q$ by domains $U_Q$ of unimodular charts 
over which there exist local trivializations of $P$. 
By \Cref{cut smooth},
on the preimage in $M$ of each $U_Q$ in $M$
we obtain a locally standard $T$-chart, 
and these locally standard $T$-charts are pairwise smoothly compatible.  
By \Cref{mfld lemma1}, we obtain
a manifold structure on $M$ with the required properties.

Fix $\tilde{\psi} \colon P \to P'$,
and let $\psi \colon Q \to Q'$ be the induced map on their base spaces.
Fix a local trivialization $U_{P} \to U_{Q} \times T$ of $P$
and a unimodular chart $U_{Q} \to \calO \subset \Rplus^n \times \R^m$.
Through $\tilde{\psi}$ and $\psi$, these induce
a local trivialization $U_{P'} \to U_{Q'} \times T$ of $P'$
and a unimodular chart $U_{Q'} \to \calO \subset \Rplus^n \times \R^m$.
In the corresponding locally standard $T$-charts 
$U_M \to \Omega$ and $U_{M'} \to \Omega$,
the map $\phi|_{U_M} \colon U_M \to U_{M'}$ becomes the identity map
$\Omega \to \Omega$.
Because every point of $M$ is contained in such an open set $U_M$,
the map $M \to M'$ is smooth.
\end{proof}


We have now defined the cutting functor $\frakP_\invt \to \frakM_\invt$.
Its composition $\frakP_\invt \to \frakM_\invt \to \frakQ_\invt$ 
with the quotient functor 
$\frakM_\invt \to \frakQ_\invt$ takes each $P$ to $\Pcut/T$.

\begin{lemma}
\labell{natural iso lemma}
Let $\Pi \colon P \to Q$ be a principal $T$-bundle over a decorated
manifold-with-corners, let $M := \Pcut$ 
be the corresponding locally standard $T$-manifold
and $c \colon P \to M$ the cut map, 
let $\pi \colon M \to M/T$ be the corresponding quotient map,
and equip $M/T$ with its decorated manifold-with-corners structure
that is induced from $M$ (\Cref{l:quotient,l:labelling}).
There exists a unique map $i_P \colon M/T \to Q$ 
such that the following diagram commutes.
$$
\xymatrix{
P \ar[drr]_{\Pi} \ar[r]^{c} & M \ar[r]^{\pi} & M/T \ar[d]^{i_P} \\
 & & Q 
}
$$
Moreover, the map $i_P$ 
is a diffeomorphism of manifolds-with-corners,
it pulls back the given unimodular labelling on $Q$
to the unimodular labelling on $M/T$ that is induced from $M$,
and it pulls back the Chern class of the principal $T$-bundle $P$
to the Chern class of the locally standard $T$-manifold $M$.
Moreover, for every isomorphism $P \to \hat{P}$ in $\frakP_\invt$,
the induced maps 
$\Pcut/T \to \hat{P}_{\cut}/T$ and $Q \to \hat{Q}$ fit into a commuting square
$$
\xymatrix{
\Pcut/T \ar[r] \ar[d]^{i_P} & \hat{P}_{\cut}/T \ar[d]^{i_{\hat{P}}} \\
Q \ar[r] & \hat{Q} \,.
}
$$
\end{lemma}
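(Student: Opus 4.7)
The plan is to first establish $i_P$ as a well-defined set-theoretic bijection using the structure of the cut equivalence relation, then upgrade it to a diffeomorphism of manifolds-with-corners by working in the local models from \Cref{cut smooth}, and finally verify that the decorations and naturality square are intertwined. I will freely use \Cref{topological cut space}, \Cref{cut smooth}, and the lemmas of \Cref{sec:differential spaces,sec:functions descend}.

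For existence and uniqueness of $i_P$, note that by the definition of $\sim$ in \Cref{topological cut space}, if $p \sim p'$ then $\Pi(p) = \Pi(p')$, so $\Pi$ descends continuously through $c$ to a $T$-invariant map $M \to Q$ (using that the $T$-action on $M$ descends from that on $P$ and $\Pi$ is $T$-invariant), and this descends further through $\pi$ to a unique continuous map $i_P \colon M/T \to Q$ with $i_P \circ \pi \circ c = \Pi$. Uniqueness follows because $\pi \circ c$ is surjective. To see $i_P$ is bijective: surjectivity is immediate from surjectivity of $\Pi$; for injectivity, if $i_P([m_1]) = i_P([m_2]) =: x$ and $p_j \in c^{-1}(m_j) \subset \Pi^{-1}(x)$, then $p_1 = t \cdot p_2$ for some $t \in T$, hence $m_1 = c(t\cdot p_2) = t \cdot m_2$ so $[m_1] = [m_2]$.

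To upgrade $i_P$ to a diffeomorphism of manifolds-with-corners, I would work in the charts of \Cref{cut smooth}. Given a unimodular chart $\psi \colon U_Q \to \calO$ and a local trivialization of $P$ over $U_Q$, the resulting diagram identifies $U_M \to U_Q$ with $\theta \colon \Omega \to \calO$ precomposed with trivializing data; passing to $T$-quotients and invoking \Cref{thetabar is diffeo} (together with \Cref{subquotient}), the map $U_M/T \to U_Q$ induced by $i_P$ becomes, in these charts, the identity on $\calO$. Since the domains $U_Q$ cover $Q$, the map $i_P$ is a diffeomorphism of manifolds-with-corners. The unimodular labelling on $M/T$ from \Cref{l:labelling} assigns to a depth-one point the stabilizer of its preimage in $M$; but in the local models, the stabilizer of a point of $M$ lying over $x \in \mathring{\del^1}Q$ is exactly the circle subgroup $T_{(x)}$ encoded by $\lambdahat(x)$ via \Cref{Tx}. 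This yields $\lambdahat_Q \circ i_P = \lambdahat_M$. For the Chern class, the key observation is that away from the boundary the cut map is a diffeomorphism, so $c$ restricts to an isomorphism of principal $T$-bundles from $\Pi^{-1}(\intQ)$ to $M_{\free}$ covering $i_P|_{\intQ_M}$; since the restriction $H^2(Q;\tZ) \to H^2(\intQ;\tZ)$ is an isomorphism (\Cref{l:cM}), this forces $i_P^* c_P = c_M$.

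The naturality square follows from the uniqueness clause of the statement: for any isomorphism $P \to \hat{P}$ with induced maps $M \to \hat{M}$ and $Q \to \hat{Q}$, both compositions $\Pcut/T \to \hat{P}_{\cut}/T \to \hat{Q}$ and $\Pcut/T \to Q \to \hat{Q}$ satisfy the defining property of $i_{\hat{P}}$ applied to the common composite $P \to \hat{P} \to \hat{Q}$, and must therefore agree. I expect the main technical obstacle to be the verification in the charts of \Cref{cut smooth} that the quotient differential structure on $M/T$ matches the manifold-with-corners structure of $\calO$ pulled back through $\psi$; however, this is essentially encapsulated by \Cref{thetabar on subset} combined with the commutativity of the three-row diagram in \Cref{cut smooth}, so no fundamentally new arguments are needed.
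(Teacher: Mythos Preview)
Your proof is correct and is precisely the kind of unpacking the paper leaves implicit: the paper's own proof is the single sentence ``This follows from the construction,'' and your argument---reading off $i_P$ from the cut equivalence relation, verifying it is a diffeomorphism via the local charts of \Cref{cut smooth} and \Cref{thetabar on subset}, matching the stabilizers to $T_{(x)}$ for the labelling, and comparing the principal bundles over the interior for the Chern class---is exactly how one would justify that sentence.
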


\begin{proof}
This follows from the construction.
\end{proof}

\begin{proof}[Proof of Part~\eqref{natural iso P M Q} of \Cref{natural isos}]
By \Cref{natural iso lemma},
by assigning to every principal $T$-bundle $\Pi \colon P \to Q$ 
over a decorated manifold-with-corners $Q$ the isomorphism 
of decorated manifolds-with-corners 
$i_P \colon \Pcut/T \to Q$ that is induced from $\Pi$,
we obtain a natural isomorphism
from the bundle quotient functor $\frakP_\invt \to \frakQ_\invt$ 
to the composition $\frakP_\invt \to \frakM_\invt \to \frakQ_\invt$.
\end{proof}

Now that we have proved Part~\eqref{natural iso P M Q} of \Cref{natural isos},
we know by \Cref{what we need 1} that the quotient functor 
$\frakM_\invt \to \frakQ_\invt$ is essentially surjective.
For our classification of locally standard $T$-manifolds, 
by \Cref{rk:functor},
it remains to show that the quotient functor $\frakM_\invt \to \frakQ_\invt$ 
is also full (and full on isomorphisms).
By \Cref{what we need 2},
this will follow from Part~\eqref{natural iso M P M} of \Cref{natural isos}.
Thus, we are looking for a functor $\frakM_\invt \to \frakP_\invt$ 
such that the composition $\frakM_\invt \to \frakP_\invt \to \frakM_\invt$
is naturally isomorphic to the identity functor on $\frakM_\invt$.
We will do this in Sections~\ref{sec:blowups}--\ref{sec:blowup smoothly},
with the \emph{simultaneous toric radial blowup} functor
$\frakM_\invt \to \frakP_\invt$.
In fact, we will show more:
the simultaneous toric radial blowup functor will be an \emph{inverse}
(up to natural isomorphism) of the cutting functor,
so that Parts~\eqref{natural iso M P M} and~\eqref{natural iso P M P}
of \Cref{natural isos} will both hold. Thus, the cutting functor 
is an equivalence of categories (as stated in \Cref{equiv of cat}).

{

\section{Simultaneous toric radial blowups, set-theoretically}
\labell{sec:blowups}

In Sections~\ref{sec:cut-1}--\ref{sec:cutting},
we defined the cutting functor, $\frakP_\invt \to \frakM_\invt$,
from the category $\frakP_\invt$ of principal $T$-bundles
over decorated manifolds-with-corners
with their decoration-preserving equivariant diffeomorphisms,  
to the category $\frakM_\invt$ of locally standard $T$-manifolds
with their equivariant diffeomorphisms.
We would like to now show that this functor is essentially surjective.  
We will do this by constructing a functor 
$$\frakM_\invt \to \frakP_\invt,$$
which we call the \emph{simultaneous toric radial blowup} functor,
such that the compositions 
$$ \frakM_\invt \to \frakP_\invt \to \frakM_\invt
\qquad \text{ and } \qquad
 \frakP_\invt \to \frakM_\invt \to \frakP_\invt $$
are naturally isomorphic to the identity functors 
on $\frakM_\invt$ and on $\frakP_\invt$.

We begin, in this section, with constructing the 
simultaneous toric radial blowup
functor on the level of sets.  
To each locally standard $T$-manifold $M$,
we will associate a set $P_M$ with a $T$-action
and a $T$-equivariant map $c_M \colon P_M \to M$
(\Cref{c:simultaneous}). 
The $T$-action on $P_M$ will be free,
and the composition of $c_M$ with the quotient map $\pi \colon M \to Q := M/T$,
$$
\xymatrix{
  P_M \ar[r]^{c_M} 
  & M\ar[r]^{\pi} & Q \,,
}
$$
will be a quotient map
(namely, it will induce a bijection from $P_M/T$ to $Q$)
(\Cref{free}).
Also,
to each $T$-equivariant diffeomorphism $\psi \colon M_1 \to M_2$
between locally standard $T$-manifolds with corresponding $T$-sets 
$P_{M_1}$ and $P_{M_2}$,
we will associate a $T$-equivariant bijection (\Cref{c:psi tilde})
$\tilde{\psi} \colon P_{M_1} \to P_{M_2}$,
such that the following diagram commutes.
$$
\xymatrix{
 P_{M_1} \ar[r]^{\tilde{\psi}} \ar[d]_{c_{M_1}} & P_{M_2} \ar[d]^{c_{M_2}} \\
 M_1 \ar[r]^{\psi} & M_2 \, .
}
$$

These will satisfy the following conditions (\Cref{r:psi tilde}).

\begin{itemize}
\item[(i)]
If $M_1 = M_2 =: M$ and $\psi \colon M \to M$ is the identity map, 
then $\tilde{\psi} \colon P_M \to P_M$ is also the identity map.

\item[(ii)]
For each composition $\psi_{31} := \psi_{32} \circ \psi_{21}$ 
of diffeomorphisms 
$M_1 \xrightarrow{\psi_{21}} M_2 \xrightarrow{\psi_{32}} M_3$
of locally standard $T$-manifolds, we will have 
$\tilde{\psi}_{31} = \tilde{\psi}_{32} \circ \tilde{\psi}_{21}
 \colon P_{M_1} \to P_{M_3}$.
\end{itemize}


We now begin this construction.

Let $M$ be a locally standard $T$-manifold,
with quotient $\pi \colon M \to Q := M/T$.

\begin{definition}
For each $\etahat = \pm \eta \in \tZhat = (\tZ)_{\primitive}/\pm 1 $,
the fixed point set $M^{S^1_\etahat}$ 
of the corresponding circle subgroup $S^1_\etahat$ of $T$
is a locally finite disjoint union of closed submanifolds of $M$.
We denote the union of its codimension-two components
by $M^\etahat$.
We call the sets $M^{\etahat}$ \textbf{toric divisors}
or \textbf{characteristic submanifolds}.
\end{definition}

\begin{remark} \labell{terms}
The term \emph{characteristic submanifolds}
aligns well with the analogous term in the toric topology community.
The term \emph{toric divisor}
aligns well with the analogous term in the algebraic geometry community.
\eor
\end{remark}

\begin{remark} \labell{indexing}
We do not restrict to individual connected components of the sets $M^\etahat$.
With this convention, the indexing set $\tZhat$ for the set of toric divisors
is the same for all locally standard $T$-manifolds $M$.
If $M$ is compact, the set $M^{\etahat}$ is empty for 
all but finitely many choices of $\etahat$.
\end{remark}

\begin{remark} \labell{Ix Tx}
Let $x \in M$, let $T_x$ be its stabilizer in $T$, and let 
\begin{equation*} 
I_x := \{ \etahat \in \tZhat \ | \ x \in M^{\etahat} \}
\end{equation*}
be the labels of the toric divisors that contain $x$.
Consider the homomorphism
$$\prod_{\etahat \in I_x} S^1_{\etahat} \to T$$
whose restriction to each factor is its inclusion map into~$T$.
Because the $T$ action on $M$ is locally standard,
this homomorphism is one-to-one, and its image is $T_x$.
\eor
\end{remark}

\begin{remark} \labell{Ix Tx'}
Fix $\etahat \in \tZhat$, and let $x \in F := M^\etahat$.
Let $T_x$ and $I_x$ be as in \Cref{Ix Tx}.
Let $T_x'$ be the kernel of the $T_x$ action 
on the fibre at $x$ of the normal bundle $\nu_M(F)$.
Then $T_x'$ is the image in $T$ of 
$\prod\limits_{\etahat' \in I_x \setminus \{\etahat\}} S^1_{\etahat'}$.
\eor
\end{remark}

We begin with \emph{radial blowup} along an individual toric divisor.

\begin{remark}
Radial blowup along a submanifold was defined 
by Klaus J\"anich in \cite{janich}.
It defines a functor from the category of pairs $(M,N)$
where $M$ is a manifold and $N$ is a submanifold,
with arrows the local diffeomorphisms of $M$ that preserve $N$,
to the category of manifolds, with arrows the local diffeomorphisms.
For an individual toric divisor,
our smooth structure will differ from J\"anich's,
but it will agree with J\"anich's on the level of sets.
A similar construction for local circle actions was introduced 
by Karshon in \cite{cutting}.
\eor
\end{remark}

Fix a locally standard $T$ manifold $M$
and a toric divisor $F := M^\etahat$.
Let $\nu_M(F)$ be the normal bundle of $F$ in~$M$.  Then 
\begin{equation} \labell{SMF}
 S_M(F) \, := \, (\nu_M(F) \ssminus 0)/\Rpos 
\end{equation}
is a $T$-equivariant principal $S^1_\etahat$-bundle over $F$,
with the bundle map to $F$ 
induced from the bundle map $\nu_M(F) \to F$.

\begin{construction}
\labell{c:radial blowup}
As a set, the \textbf{radial blowup} of $M$ along the toric divisor
$F := M^\etahat$ is the disjoint union
$$ M \odot F \, := \, 
   (M \ssminus F) \, \sqcup \, S_M(F) \, ,$$
equipped with the map
$$ c_\etahat \colon M \odot F \to M $$
whose restriction to $M \ssminus F$ is the inclusion map into $M$
and whose restriction to $S_M(F)$ is the bundle map $S_M(F) \to F$.
\eoc
\end{construction}

\begin{remark} 
The $T$-action on $M$ induces a $T$-action on $M \odot F$,
the map $c_\etahat \colon M \odot F \to M$ is $T$-equivariant,
and the composition 
$$ \xymatrix{
 M \odot F \ar[r]^{c_{\etahat}} 
   & M \ar[r]^{\pi} & Q 
} $$
is a quotient map
(namely, it induces a bijection from $(M \odot F)/T$ to $Q$).
\end{remark}

\begin{remark}
When the toric divisor $F$ is the empty set, we obtain 
$$ M \odot \emptyset = M ,$$
equipped with the identity map to $M$.
\eor
\end{remark}

\begin{remark} \labell{etahat torsor}
Let $x \in F$.  Then $S^1_{\etahat}$
acts freely and transitively on the preimage of $x$ in $M \odot F$.
Moreover, let $T_x$ be the stabilizer of $x$ in $T$,
and let $T_x'$ be as in \Cref{Ix Tx'}.
Then $T_x$ acts transitively on the preimage of $x$ in $M \odot F$,
through the projection $T_x \to S^1_\etahat$ with kernel $T_x'$.
\eor
\end{remark}

\begin{construction}
\labell{c:radial blowup map}
Let $M'$ be another locally standard $T$-manifold.
Fix $\etahat \in \tZhat$, and let $F = M^\etahat$ and $F' = (M')^\etahat$.
Then any $T$-equivariant diffeomorphism
$\psi \colon M \to M'$ induces a $T$-equivariant bijection
$\tilde{\psi} \colon M \odot F \to M' \odot F'$,
inducing a pullback diagram (of $T$-equivariant bijections)
$$ \xymatrix{
 M \odot F \ar[r]^{\tilde{\psi}} \ar[d]_{c_\etahat} 
     & M' \odot F' \ar[d]^{c_\etahat} \\
 M \ar[r]^{\psi} & M' \, .
}$$
\eoc
\end{construction}

\begin{remark} \labell{r:radial blowup map}, 
In \Cref{c:radial blowup map}, 
if $M=M'$ and $\psi = \Id_M$, then $\tilde{\psi} = \Id_{M \odot F}$, 
and if $\psi = \psi_2 \circ \psi_1$, 
then $\tilde{\psi} = \tilde{\psi_2} \circ \tilde{\psi_1}$. 
\eor
\end{remark}

We now perform the radial blowup
\emph{simultaneously} along all the toric divisors, as follows.

\begin{construction} \labell{c:simultaneous}
As a set, the \textbf{simultaneous toric radial blowup} $P_M$ 
is the fibred product over $M$
of the radial blowups along all the toric divisors.
Namely, let $\{ F_\etahat \}_{\etahat \in \tZhat}$ 
be the set of toric divisors,
and consider the corresponding radial blowups
and maps $c_{\etahat} \colon M \odot F_{\etahat} \to M$.  
Then we take 
$$ P_M := \Big\{ (y_{\etahat})_{\etahat \in \tZhat}
   \in \prod_{\etahat \in \tZhat} (M \odot F_{\etahat}) \ \, | \, \ 
   \text{ there exists $x \in M$ such that } 
   c_{\etahat}(y_{\etahat}) = x \text{ for all $\etahat$ } \Big\} ,$$
equipped with the map 
$$ c_{\odot} \colon P_M \to M \quad , \quad 
   (y_{\etahat})_{\etahat \in \tZhat} \mapsto x \ 
\text{ such that } 
   c_{\etahat}(y_{\etahat}) = x \text{ for all $\etahat$\,.}$$ 
\eoc
\end{construction}

\begin{lemma} \labell{free}
The $T$-action on $M$ induces a $T$-action on $P_M$,
and the map $ c_{\odot} \colon P_M \to M $ is $T$-equivariant.
Moreover,
the $T$-action on $P_M$ is free, and the composition
$$ \xymatrix{
P_M \ar[rr]^{c_{\odot}} \ar@/_1pc/[rrrr]_{\Pi} && M \ar[rr]^{\pi} && Q 
}$$ 
is a quotient map (namely, it induces a bijection 
from $P_M/T$ to $Q$).
\end{lemma}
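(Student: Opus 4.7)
The plan is to handle the three claims in \Cref{free} in order, relying heavily on Remarks~\ref{Ix Tx} and~\ref{Ix Tx'} and \Cref{etahat torsor}.

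First I would define the $T$-action on $P_M$ via the functoriality of radial blowup along a single toric divisor. For each $t \in T$, the action by $t$ on $M$ is a $T$-equivariant diffeomorphism $M \to M$ that preserves every toric divisor $F_\etahat$; by \Cref{c:radial blowup map} it induces a bijection of $M \odot F_\etahat$, and the product of these bijections preserves $P_M$ since each $c_\etahat$ is $T$-equivariant. Property~(i) of Remark~\ref{r:radial blowup map} gives that the identity acts as the identity, and property~(ii) gives the composition law, so this is indeed a $T$-action. Equivariance of $c_\odot$ is then immediate from its definition as the common value of the $c_\etahat$.

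For freeness, fix $y = (y_\etahat)_\etahat \in P_M$ with $x := c_\odot(y)$ and let $I_x \subset \tZhat$ be as in \Cref{Ix Tx}. Any $t \in T$ fixing $y$ must fix $x$, so lies in $T_x$. For $\etahat \notin I_x$, the point $y_\etahat$ equals $x$ in $M \ssminus F_\etahat$, so the condition is automatic. For $\etahat \in I_x$, by \Cref{etahat torsor} the group $T_x$ acts on the fibre $c_\etahat^{-1}(x) \subset S_M(F_\etahat)$ through the projection $T_x \twoheadrightarrow S^1_\etahat$ with kernel $T_x'$, and this action is free on the fibre, so the stabilizer of $y_\etahat$ in $T_x$ is exactly $T_x'$. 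Hence the stabilizer of $y$ is $\bigcap_{\etahat \in I_x} T_x'$, which by \Cref{Ix Tx'} is the intersection of the kernels of the projections $T_x \to S^1_\etahat$ for $\etahat \in I_x$. But the injectivity of $\prod_{\etahat \in I_x} S^1_\etahat \to T_x$ from \Cref{Ix Tx} exhibits $T_x$ as an internal direct product, so this intersection is trivial.

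Finally, for the bijection $P_M/T \to Q$, surjectivity is straightforward: given $q \in Q$, pick $x \in \pi^{-1}(q)$, set $y_\etahat = x$ for $\etahat \notin I_x$, and pick any $y_\etahat \in c_\etahat^{-1}(x)$ (non-empty by \Cref{etahat torsor}) for $\etahat \in I_x$; then $\Pi(y) = q$. For injectivity, suppose $y, y' \in P_M$ satisfy $\Pi(y) = \Pi(y')$. Acting by some $s \in T$ with $s \cdot c_\odot(y) = c_\odot(y')$, we may assume $c_\odot(y) = c_\odot(y') =: x$. For $\etahat \notin I_x$ the entries agree automatically, so the problem reduces to finding a single $t \in T_x$ whose action takes $y_\etahat$ to $y'_\etahat$ for every $\etahat \in I_x$ simultaneously. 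By \Cref{etahat torsor}, for each $\etahat \in I_x$ there is a unique $s_\etahat \in S^1_\etahat$ with $s_\etahat \cdot y_\etahat = y'_\etahat$. Now the key step, and the main obstacle, is to produce a single $t \in T_x$ mapping to each $s_\etahat$ under the projection $T_x \to S^1_\etahat$: this is exactly what the internal direct product decomposition $T_x = \prod_{\etahat \in I_x} S^1_\etahat$ of \Cref{Ix Tx} delivers, and we take $t$ to correspond to $(s_\etahat)_{\etahat \in I_x}$. The hardest part of the argument is this final compatibility step, and it is precisely what makes the local standardness hypothesis (via unimodularity) essential.
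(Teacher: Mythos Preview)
Your proof is correct and follows essentially the same approach as the paper, which simply observes that it suffices to show $T_x$ acts freely and transitively on the fibre $c_\odot^{-1}(x)$ and then cites \Cref{c:simultaneous}, \Cref{etahat torsor}, and \Cref{Ix Tx,Ix Tx'}. You have supplied the details that the paper leaves implicit, in particular the internal direct product decomposition of $T_x$ as the engine behind both freeness and transitivity.
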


\begin{proof}
It is enough to show that, for each $x \in M$, its stabilizer $T_x$
acts freely and transitively on the preimage of $x$ in $P_M$.
This, in turn, follow from \Cref{c:simultaneous},
\Cref{etahat torsor}, and \Cref{Ix Tx,Ix Tx'}.
\end{proof}

\begin{construction} \labell{c:psi tilde}
Let $\psi \colon M \to M'$ be a $T$-equivariant diffeomorphism.
By Constructions \ref{c:radial blowup map} and \ref{c:simultaneous},
$\psi$ induces a $T$-equivariant map
from the simultaneous toric radial blowup $P_{M}$ of $M$
to the simultaneous toric radial blowup $P_{M'}$ of $M'$,
inducing a pullback diagram (of $T$-equivariant bijections)
$$ \xymatrix{
 P_{M} \ar[rr]^{\tilde{\psi}} \ar[d]_{c_{\odot}} 
  && P_{M'} \ar[d]^{c_{\odot}} \\
 M \ar[rr]^{\psi} && M' \, .
} $$
\eoc
\end{construction}

\begin{remark} \labell{r:psi tilde}
In \Cref{c:psi tilde},
if $\psi=\Id_M$, then $\tilde{\psi} = \Id_{P_M}$,
and if $\psi = \psi_2 \circ \psi_1$ 
then $\tilde{\psi} = \tilde{\psi_2} \circ \tilde{\psi_1}$.
This follows from \Cref{r:radial blowup map}.
\eor
\end{remark}

\begin{remark} \labell{open in blowup}
Our construction commutes with inclusions of open subsets,
in the following sense.
Let $M$ be a locally standard $T$-manifold, and let $U_M$ be
a $T$ invariant open subset.
Then the set $P_{U_M}$ 
that is obtained by applying our functor to $U_M$
is a $T$-invariant subset of the set $P_M$
that is obtained by applying our functor to $M$,
the map $c_{\odot} \colon P_{U_M} \to U_M$
is the restriction of the map $c_{\odot} \colon P_M \to M$,
and the inclusion map $U_M \to M$ gets mapped to 
the inclusion map $P_{U_M} \to P_M$.
Thus, we obtain the following commuting diagram,
where the horizontal maps are inclusions of subsets:
$$ \xymatrix{
 P_{U_M} \ar[d]_{c_{\odot}} \ar@/_2pc/[dd]_{\Pi}
  & \hookrightarrow & P_M \ar[d]^{c_{\odot}} \ar@/^2pc/[dd]^{\Pi} \\ 
  U_M \ar[d]_{\pi} & \hookrightarrow & M \ar[d]^{\pi} \\
  U_Q & \hookrightarrow & Q \, .
}$$
\eor
\end{remark}

\section{Diffeomorphisms of models lift: from $M$ to $P$}
\labell{sec:M to P}

In this section we work purely within models.

Recall that, for any non-negative integers $n,l,m$,
we have the model cutting map $c^\std$ 
and the model quotient map $\theta$
$$ \xymatrix{
\Rplus^n \times \R^m \times \T^{n+l}
 \ar[rr]^{c^{\std}}
   && \C^n \times \T^l \times \R^m
 \ar[rr]^{\theta}
   && \Rplus^n \times \R^m , 
}$$
\begin{multline*}
 (s_1,\ldots,s_n;x_1,\ldots,x_m;b_1,\ldots,b_{n+l})
 \mapsto (z_1,\ldots,z_n;b_{n+1},\ldots,b_{n+l};x_1,\ldots,x_m)  \\
 \mapsto (s_1,\ldots,s_n;x_1,\ldots,x_m) , 
\end{multline*}
given, respectively, by $z_j = \sqrt{s_j}b_j$ and $s_j = |z_j|^2$
for $j = 1,\ldots,n$.

We pause to set up some notation.
For any $0 \leq k \leq n$,
we identify $\C^k \times (\Cx)^{n-k}$ and $\R^k \times \Rpos^{n-k}$
with their images in $\C^n$ and $\R^n$,
(namely, we omit their parenthetization).
Similarly, we identify $\T^k \times \T^{n-k}$ with $\T^n$.
For $d=n+l$, we identify $\T^n \times \T^l$ with $\T^d$.
For a $\T^d$-invariant open subset
$ \Omega \subset (\C^k \times (\Cx)^{n-k}) \times \T^l \times \R^m$,
we write its coordinates as $(z,c,x),$ with 
\begin{multline*}
 z = (z_1,\ldots,z_k,z_{k+1},\ldots,z_n) \in \C^k \times (\Cx)^{n-k}, 
\qquad
 c = (c_{n+1},\ldots,c_d) \in \T^{l}, \\
\text{ and } \qquad
 x = (x_1,\ldots,x_m) \in \R^m.
\end{multline*}
For an open subset $\calO \subset (\Rplus^{k} \times \Rpos^{n-k}) \times \R^m$,
we write the coordinates of the product $\calO \times \T^d$ as $((s,x),(a,b))$, 
with 
\begin{multline*}
s = (s_1,\ldots,s_k,s_{k+1},\ldots,s_n) \in \Rplus^k \times \Rpos^{n-k},
\qquad
x = (x_1,\ldots,x_m) \in \R^m, \\
a = (a_1,\ldots,a_k) \in \T^k, 
\quad \text{ and }  \qquad
 b = (b_{k+1},\ldots,b_d) \in \T^{d-k}.
\end{multline*}

\begin{construction} \labell{P for model}
For $d=n+l$, let 
$ \Omega \subset \C^n \times \T^l \times \R^m$
be a $\T^d$ invariant subset,
and let 
$\calO \subset \Rplus^{n} \times \R^m$
be its image under the map $\theta$.
We will now construct a $\T^d$-equivariant bijection
$$ \xi_\Omega \colon P_\Omega \to \calO \times \T^d,$$
where $P_\Omega$ is obtained from $\Omega$
by the set-theoretic simultaneous toric radial blowup
(\Cref{c:simultaneous}).
For each $1 \leq j \leq n$, let $F_j := \Omega \cap \{ z_j = 0 \}$
(which might be empty).
We identify the normal bundle $\nu_\Omega(F_j)$ 
of $F_j$ in $\Omega$ with $F_j \times \C$
by fixing the coordinates other than $z_j$
and identifying $T_0\C_{j\text{th}}$ with $\C$.
Identifying $\C^\times/\Rpos$ with $S^1$ by $z \mapsto \dfrac{z}{|z|}$,
we obtain an identification of the circle bundle $S_\Omega(F_j)$
(see \eqref{SMF})
with $F_j \times S^1$. 
This allows us to identify the radial blowup $\Omega \odot F_j$ 
along $F_j$ (\Cref{c:radial blowup})
with the preimage of~$\calO$ under the map 
$$(\C^{j-1} \times (\Rplus \times S^1) \times \C^{n-j}) 
\times \T^l \times \R^m \to \Rplus^n \times \R^m
$$
that is given by
\begin{multline*}
(z_1,\ldots,z_{j-1},(s_j,b_j),z_{j+1},\ldots,z_n;
    b_{n+1},\ldots,b_{n+l};x_1,\ldots,x_m)
 \mapsto  \\
  (|z_1|^2,\ldots,|z_{j-1}|^2,s_j,|z_{j+1}|^2,\ldots,|z_n|^2; x_1,\ldots,x_m) .
\end{multline*}
Doing this for all $1 \leq j \leq n$,
we obtain our equivariant bijection 
$\xi_\Omega \colon P_\Omega \to \calO \times \T^d$.
\eoc
\end{construction}

\begin{remark} \labell{cute diagram}
Let $\Omega$ be a $\T^d$-invariant open subset of 
$ \C^n \times \T^l \times \R^m $,
let $P_{\Omega}$ be its simultaneous toric radial blowup,
and let $\calO := \theta(\Omega) \subset \Rplus^n \times \R^m $.
The bijection 
$\xi_\Omega \colon P_\Omega \to \calO \times \T^d$
of \Cref{P for model} fits into a commuting diagram
$$
\xymatrix{
 P_{\Omega} \ar[rr]_{\cong}^{\xi_\Omega} \ar[dr]_{c_{\odot}} 
                  \ar[dd]_{\Pi}
 && \calO \times \T^d \ar[dl]^{c^\std}
 \ar[dd]^{\text{projection}}
 \\
  & \Omega \ar[dr]^{\theta^\std} \ar[ld]_{\pi} & \\ 
\Omega/\T^d \ar[rr]^{\cong} && \calO \, ,
}
$$
where $c^\std$ and $\theta^\std$ are the model cutting map
and the model quotient map.
\end{remark}

\begin{proposition} \labell{characterization}
Fix non-negative integers $n$, $n'$, $l$, $l'$, $m$, $m'$, 
such that $n+l=n'+l'=:d$ 
and a non-negative integer $k$ such that $k \leq \min\{n,n'\}$.
Let $\Omega$ be a $\T^d$-invariant open subset of 
$(\C^k \times (\Cx)^{n-k}) \times \T^l \times \R^m$,
and let $\calO$ be its image in $(\Rplus^k \times \Rpos^{n-k}) \times \R^m$.
Let 
$$ \rho \colon \T^d \to \T^d$$ 
be an isomorphism that fixes $\T^k \times \{1\}^{d-k}$.  
Let 
$$ \rho_j \colon \T^{d-k} \to S^1 , \qquad \text{ for } 1 \leq j \leq d ,$$
be the restriction of its components
to $\{1\}^k \times \T^{d-k}$. 

Then a function 
$$ f \colon \Omega \to 
   (\C^k \times (\Cx)^{n'-k}) \times \T^{l'} \times \R^{m'} $$
is smooth and $\rho$-equivariant if and only if
there exist smooth functions
$$A_j = A_j(s,x) \colon \calO \to \C \qquad \text{ for } 1 \leq j \leq k , $$
$$A_j = A_j(s,x) \colon \calO \to \Cx \qquad \text{ for } k < j \leq n' , $$
$$A_j = A_j(s,x) \colon \calO \to S^1 \qquad \text{ for } n' < j \leq d , 
$$
and 
$$x_j' = x_j'(s,x) \colon \calO \to \R \qquad \text{ for } 1 \leq j \leq m', $$
such that, setting
\begin{equation} \labell{s and b}
   s = (|z_1|^2,\ldots,|z_n|^2) \qquad \text{ and } \qquad
   b = \big( \frac{z_{k+1}}{|z_{k+1}|}, \ldots, \frac{z_n}{|z_n|}, 
             c_{n+1},\ldots,c_{d} \big) ,
\end{equation}
the first $k$ components 
of $f(z_1,\ldots,z_n;c_{n+1},\ldots,c_{d};x_1,\ldots,x_m)$ are
$$ z_j' = z_j \rho_j(b) A_j(s,x) \qquad \text{ for } 1 \leq j \leq k,$$
its next $n'-k$ components are
$$ z_j' = \rho_j(b) A_j(s,x) \qquad \text{ for } k < j \leq n',$$
its next $l'$ components are
$$ c_j' = \rho_j(b) A_j(s,x) \qquad \text{ for } n' < j \leq d,$$
and its last $m'$ components are
$$ x_j' = x_j'(s,x) \qquad \text{ for } 1 \leq j \leq m'.$$
\end{proposition}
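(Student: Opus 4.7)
The ``if'' direction is a routine verification. On $\Omega$, the entries of the tuple $b$ in \eqref{s and b} are smooth (the ratios $z_j/|z_j|$ for $k<j\le n$ because $z_j$ never vanishes there, and $c_{n+1},\ldots,c_d$ directly), so each of the proposed formulas defines a smooth map; the factor $z_j$ in the first $k$ components absorbs any behaviour of $A_j$ along $\{s_j=0\}$. Equivariance follows from a direct computation, using that under the action of $\tau=(a,\tilde\tau)\in \T^k\times \T^{d-k}$ the tuple $s$ is fixed while $b$ is multiplied componentwise by $\tilde\tau$, combined with the stated formula for $\rho(\tau)$.

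For the ``only if'' direction, given a smooth $\rho$-equivariant $f$ I extract, from each component, a smooth $\T^d$-invariant function on $\Omega$ and descend it to $\calO$ using \Cref{thetabar on subset}. The last $m'$ components of $f$ are $\R$-valued and $\T^d$-invariant (since $\rho$ acts trivially on $\R^{m'}$), so they are of the form $x_j'(s,x)$ with $x_j'$ smooth. For a component $f_j$ with $k<j\le d$ (valued in $\Cx$ or $S^1$), the function $g_j:=\rho_j(b)^{-1}f_j$ is smooth on $\Omega$ and, by a direct calculation using the transformation rules $b\mapsto \tilde\tau\cdot b$ and $f_j\mapsto \rho_j(\tilde\tau)f_j$, is $\T^d$-invariant; hence $g_j=A_j(s,x)$ for a smooth $A_j$, which automatically takes values in $\Cx$ or $S^1$ as required.

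The main subtlety is the $\C$-valued components with $1\le j\le k$, for which $\rho_j(b)^{-1}f_j$ is only $a_j$-equivariant rather than $\T^d$-invariant. I remedy this by setting $g_j:=\bar z_j\,\rho_j(b)^{-1}f_j$; the extra $\bar z_j$ cancels the residual $a_j$-action, so $g_j$ is smooth and $\T^d$-invariant, and hence $g_j=A_j'(s,x)$ for a smooth $A_j'\colon\calO\to\C$. Since $g_j$ vanishes on $\{z_j=0\}$, $A_j'$ vanishes on the facet $\{s_j=0\}$, and Hadamard's lemma produces a smooth $A_j$ with $A_j'(s,x)=s_j A_j(s,x)$. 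Substituting back yields $f_j=z_j\rho_j(b)A_j(s,x)$ on $\{z_j\ne 0\}$, and the identity extends to $\{z_j=0\}$ because $f_j$ itself vanishes there (by the $S^1$-equivariance in the $j$-th factor applied at a fixed point of that subcircle). This $\bar z_j$-and-Hadamard trick is the step where the real content lies; everything else is bookkeeping in equivariant coordinates.
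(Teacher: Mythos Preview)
Your proof is correct; the argument for the last $m'$ components and for the components with $k<j\le d$ matches the paper's exactly. The interesting difference is in how you handle the first $k$ components.

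The paper's route, which it calls ``Bredon's trick'', works \emph{upstairs}: it restricts the $j$th component $f_j$ to the real axis in the $z_j$-variable, observes that the restriction is odd by $S^1$-equivariance with $a_j=-1$, applies Hadamard's lemma in the real variable $r$ to write $f_j(r,v)=rA_j'(r,v)$, then invokes Whitney's even-function theorem to write $A_j'(r,v)=B_j(r^2,v)$, and finally uses $S^1$-equivariance again to extend to all $z_j$ and then factors out $\rho_j(b)$ in a second descent step. Your route works \emph{downstairs}: by multiplying $f_j$ by $\bar z_j\rho_j(b)^{-1}$ you produce a genuinely $\T^d$-invariant function in one stroke, descend it to $\calO$ via \Cref{thetabar on subset}, and apply Hadamard's lemma on the quotient along the facet $\{s_j=0\}$. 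This folds the two separate descent/factoring steps of the paper's argument into one and replaces the explicit appeal to Whitney's theorem by the descent result (which, to be fair, itself relies on Whitney or Schwarz). Your version is a bit more economical; the paper's version is more hands-on and makes the role of Whitney's theorem explicit.

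One minor point worth noting: your Hadamard step is on a manifold-with-corners rather than on $\R^N$, so one should check that the factorisation $A_j'=s_jA_j$ is available there. It is---locally either $s_j$ is bounded away from zero (divide), or one is near the facet and the usual integral formula applied to a smooth extension of $A_j'$ gives the factorisation, the boundary term vanishing because $A_j'$ vanishes on $\{s_j=0\}\cap\calO$; on overlaps the two recipes agree since they agree where $s_j>0$.
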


\begin{proof}
First, note that 
$$ \rho(a,b) = 
   (a_1\rho_1(b),\ldots,a_k\rho_k(b);\rho_{k+1}(b),\ldots,\rho_d(b))$$ 
for all $a \in \T^k$ and $b \in \T^{n-k}$,
and note that if there exist smooth functions 
$A_1,\ldots,A_d$, $x_1',\ldots,x_m'$ as above
then $f$ is smooth and $\rho$-equivariant.

From now on, assume that $f$ is smooth and $\rho$-equivariant.

The last $m$ coordinates of $f$ are real-valued smooth functions
of $(z,c,x)$ and are $\T^d$ invariant.
By \Cref{thetabar is diffeo},
they can be written in the required form $x_j'(s,x)$.

For $n' < j \leq d$, 
the product of the $S^1$ valued smooth function $\rho_j(b)^{-1}$
with the corresponding coordinate $c_j'$ of $f$
is an $S^1$-valued smooth function of $(z,c,x)$ that is $\T^d$ invariant.
By \Cref{thetabar is diffeo},
it can be written in the required form $A_j(s,x)$.

For $k < j \leq n'$, 
the product of the $S^1$ valued smooth function $\rho_j(b)^{-1}$
with the corresponding coordinate $z_j'$ of $f$
is a non-vanishing complex-valued smooth function 
of $(z,c,x)$ that is $\T^d$ invariant.
By \Cref{thetabar is diffeo},
it can be written in the required form $A_j(s,x)$.

To show the first $k$ components of $f$ have the required form,
we apply ``Bredon's trick'' (see \cite[Chap.~VI, \S 5]{Br}). 
Fix $1 \leq j \leq k$.  
Write $(z,c,x) = (z_j,v)$ with $v$ consisting of the coordinates
other than $z_j$. 
The $j$th coordinate of $f$ then becomes a complex-valued function 
$f_j(z_j,v)$ on $\Omega$
that satisfies $af_j(z_j,v) = f_j(az_j,v)$ for all $a \in S^1$.
Restricting to real values of $z_j$ and to $a=-1$, 
we obtain a complex-valued smooth function $f_j(r,v)$ 
defined for all $(r,v)$ in $\Omega$ with $r$ real
that satisfies
$f_j(-r,v) = -f_j(r,v)$ for all such $(r,v)$.
In particular, $f_j(0,v) = 0$.
By Hadamard's lemma (in one variable, with parameters; 
see \Cref{fn:hadamard} on p.~\pageref{fn:hadamard}),
there exists a smooth function $A_j'(r,v)$ such that
$f_j(r,v) = r A_j'(r,v)$ for all such $(r,v)$.
Because $f_j(-r,v) = -f_j(r,v)$,
we have $A_j'(-r,v) = A_j'(r,v)$ for all such $(r,v)$.
By Whitney \cite{whitney}
(see \Cref{thetabar is diffeo}),
there exists a smooth function $B_j(s,v)$
such that $A_j'(r,v) = B_j(r^2,v)$ for all such $(r,v)$.
For all $z_j \neq 0$,
\begin{multline*}
f_j(z_j,v) = \frac{z_j}{|z_j|} f_j(|z_j|,v) 
\qquad \text{ by $S^1$-equivariance } \\ 
 \ = \ \frac{z_j}{|z_j|} |z_j| A_j'(|z_j|,v)
\qquad \text{ by the choice of $A_j'$ }   \\
 \ = \ z_j B_j(|z_j|^2,v)
\qquad \text{ by the choice of $B_j$ }.
\end{multline*}
Because this function 
intertwines the $\T^d$-action on $\Omega$
with its action on $\C$ that is given by the homomorphism 
$(a,b) \mapsto a_j \rho_j(b)$,
the complex-valued smooth function 
$(z_j,v) \mapsto \rho_j(b)^{-1} B_j(|z_j|^2,v)$
is $\T^d$ invariant when $z_j \neq 0$. 
By continuity, it is $\T^d$ invariant everywhere on $\Omega$.
By Whitney \cite{whitney} (see \Cref{thetabar is diffeo}),
we can write it as $A_j(s,x)$ with $A_j$ smooth.
So 
$$ f_j(z_j,v) = z_j \rho_j(b) A_j(s,x) $$
whenever $|z_j| \neq 0$, hence everywhere on $\Omega$,
\end{proof}

\begin{corollary} \labell{crucial cor}
Fix non-negative integers $n,n',l,l',m,m'$ such that $n+l=n'+l'=d$
and a non-negative integer $k$ such that $k \leq n$ and $k \leq n'$.
Fix an isomorphism 
$$ \rho \colon \T^d \to \T^d$$
that fixes $\T^k \times \{ 1 \}^{d-k}$.
Let 
$$ f \colon \Omega \to \Omega '$$
be a $\rho$-equivariant diffeomorphism 
from an invariant open subset 
$\Omega$ of $(\C^k \times (\Cx)^{n-k} ) \times \T^{l} \times \R^{m}$
to an invariant open subset 
$\Omega'$ of $(\C^k \times (\Cx)^{n'-k} ) \times \T^{l'} \times \R^{m'}$.
Denote by $\calO = \theta(\Omega)$ and $\calO'=\theta(\Omega')$ 
the corresponding open subsets 
of $(\Rplus^k \times \Rpos^{n-k}) \times \R^{m}$
and of $(\Rplus^k \times \Rpos^{n'-k}) \times \R^{m'}$.
Let
$$ G \colon \calO \times \T^d \to \calO' \times \T^d$$
be the map that is obtained 
from the map 
$\tilde{f} \colon P_\Omega \to P_{\Omega'}$
that $f$ induces 
on the simultaneous toric radial blowups of $\Omega$ and of $\Omega'$
(\Cref{c:psi tilde})
by identifying these simultaneous toric radial blowups 
with $\calO \times \T^d$ and $\calO' \times \T^d$
as in Construction~\ref{P for model}.
Then $G$ is a diffeomorphism.
\end{corollary}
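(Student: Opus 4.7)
The plan is to apply Proposition \ref{characterization} to write $f$ in explicit coordinates, trace through Construction \ref{P for model} to read off $G$ coordinate-by-coordinate, and check smoothness directly. Functoriality (\Cref{r:psi tilde}) then delivers smoothness of $G^{-1}$ by the same argument applied to $f^{-1}$.

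First, since $f$ is smooth and $\rho$-equivariant, Proposition \ref{characterization} produces smooth functions $A_1,\ldots,A_d$ and $x'_1,\ldots,x'_{m'}$ on $\calO$ such that the components of $f$ take the form listed there. I claim that for $1 \leq j \leq k$ the $\C$-valued function $A_j$ is in fact non-vanishing on $\calO$. Indeed, because $\rho$ pointwise fixes $\T^k \times \{1\}^{d-k}$, the diffeomorphism $f$ intertwines the $j$th circle action on source and target and hence carries the facet $\{z_j=0\}\cap\Omega$ onto $\{z'_j=0\}\cap\Omega'$; since $f$ is a diffeomorphism, the $j$th component $z_j\mapsto z_j\rho_j(b)A_j(s,x)$ must have non-vanishing normal derivative along $\{z_j=0\}$, and away from that facet $A_j$ is automatically non-vanishing because $z'_j\ne 0\iff z_j\ne 0$. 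The same argument, or Proposition \ref{characterization} itself, gives that $A_j$ for $k<j\leq n'$ is $\Cx$-valued and $A_j$ for $n'<j\leq d$ is $S^1$-valued.

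Next, I unwind Construction \ref{P for model}. The bijection $\xi_\Omega \colon P_\Omega \to \calO\times\T^d$ sends a point sitting over $(z,c,x)\in\Omega$ to $((s,x),(a,b))$ where $s_j=|z_j|^2$ for all $j$, $b=(z_{k+1}/|z_{k+1}|,\ldots,z_n/|z_n|,c_{n+1},\ldots,c_d)$, and, for $1\leq j\leq k$, $a_j=z_j/|z_j|$ whenever $z_j\ne 0$ and $a_j$ is the blowup direction in $S_\Omega(F_j)$ when $z_j=0$; analogous formulas on the target give $\xi_{\Omega'}$. The induced lift $\tilde f$ preserves fibres of $c_\odot$ and acts on the exceptional circle $S_\Omega(F_j)$ over a point $x_0\in F_j$ through the derivative of $f$ in the $z_j$-direction, i.e.\ through multiplication by $\rho_j(b)A_j(s,x)/|\rho_j(b)A_j(s,x)|$ on the unit tangent circle. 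Combining this with the fact that away from the facets the blowup is just the identity section, I read off $G$ coordinatewise:
\begin{align*}
s'_j &= s_j\,|A_j(s,x)|^2 \qquad (1\leq j\leq k), \\
s'_j &= |A_j(s,x)|^2 \qquad (k<j\leq n'), \\
x'_j &= x'_j(s,x) \qquad (1\leq j\leq m'), \\
a'_j &= a_j\,\rho_j(b)\,\tfrac{A_j(s,x)}{|A_j(s,x)|} \qquad (1\leq j\leq k), \\
b'_j &= \rho_j(b)\,\tfrac{A_j(s,x)}{|A_j(s,x)|} \qquad (k<j\leq n'), \\
b'_j &= \rho_j(b)\,A_j(s,x) \qquad (n'<j\leq d).
\end{align*}
Because each $A_j$ that appears in a denominator is non-vanishing on $\calO$, every component on the right is a smooth function of $(s,x,a,b)$, so $G$ is smooth.

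Finally, the inverse $f^{-1}\colon\Omega'\to\Omega$ is a $\rho^{-1}$-equivariant diffeomorphism satisfying the same hypotheses (with the roles of $n,l,m$ and $n',l',m'$ exchanged and the same $k$), so the argument above applied to $f^{-1}$ produces a smooth map $\calO'\times\T^d\to\calO\times\T^d$. By the functoriality noted in \Cref{r:psi tilde} together with Construction \ref{P for model}, this map is exactly $G^{-1}$, so $G$ is a diffeomorphism. The main obstacle is the bookkeeping in computing the $a'_j$ entries of $G$: one has to verify, using the explicit identification of $S_\Omega(F_j)$ with $F_j\times S^1$ via $z_j\mapsto z_j/|z_j|$, that on the exceptional divisor $\tilde f$ is really given by the formula above and not merely up to a non-smooth phase factor; this reduces to the observation that $A_j/|A_j|$ is a smooth $S^1$-valued function on $\calO$ because $A_j$ is non-vanishing.
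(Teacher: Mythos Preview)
Your proof is correct and follows essentially the same approach as the paper: invoke Proposition~\ref{characterization} to express $f$ via the smooth functions $A_j$ and $x'_j$, argue that $A_j$ is non-vanishing for $1\leq j\leq k$ using the diffeomorphism property (normal derivative along the facet, and the facet-preserving condition away from it), compute $G$ explicitly in coordinates to obtain the identical formulas, and then apply the same reasoning to $f^{-1}$ for the inverse.
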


\begin{proof}
Let $A_j(s,x)$ and $x_j'(s,x)$ be smooth functions as in 
Proposition~\ref{characterization}.
We show below that the map $G$ is given by 
$$ G \colon ((s,x),(a,b)) \mapsto ((s',x'),(a',b'))$$
where
$$ s'_j = s_j |A_j(s,x)|^2 
\quad \text{ and } \quad
   a'_j = a_j \rho_j(b) \frac{A_j(s,x)}{|A_j(s,x)|} 
\qquad   \text{ for } \quad 1 \leq j \leq k , $$
where
$$ s'_j = |A_j(s,x)|^2 
\quad \text{ and } \quad
   b'_j = \rho_j(b) \frac{A_j(s,x)}{|A_j(s,x)|} 
\qquad \text{ for } \quad k < j \leq n' , $$
where
$$ b'_j = \rho_j(b) A_j(s,x)
\qquad \text{ for } \quad n' < j \leq d , $$
and where
$$ x'_j = x_j(s,x) \qquad \text{ for } \quad 1 \leq j \leq m' .$$
In particular, the map $G$ is smooth. 
Applying the same argument to $f^{-1}$, we conclude
that the map $G$ is a diffeomorphism.

For each $1 \leq j \leq k$, the corresponding toric divisors are
$F_j = \Omega \cap \{ z_j = 0 \}$, 
and $F'_j = \Omega' \cap \{ z_j' = 0 \}$.
Since $f$ is a $\rho$-equivariant diffeomorphism 
and $\rho$ fixes $\T^k \times \{1\}^{d-k}$, we have $F_j = f^{\pre}(F'_j)$.

We identify the normal bundle of $F_j$ in $\Omega$ with $F_j \times \C$
by fixing the coordinates other than $z_j$
and identifying $T_0\C_{j\text{th}}$ with $\C$,
and similarly for $F'_j$.
With these identifications,
at each point in $F_j$,
the \emph{vertical differential}
 --- defined as the map that $df$ induces on the fibre of the normal bundle
 --- 
is multiplication by $\rho_j(b) A_j(s,x)$ for the $s,b$
as in \eqref{s and b}. 
(Note that $F_j = \{s_j=0\}$.)
Since $f$ is a diffeomorphism, its component $f_j \colon \Omega \to \C$
is a submersion, and since $f_j$ vanishes along $F_j$,
its vertical differential must be invertible on the fibre of the normal bundle
of $F_j$ at each point of $F_j$.

We conclude that $A_j(s,x) \neq 0$ not only for $k < j \leq n'$,
but also for $1 \leq j \leq k$.
Indeed, outside $F_j$ this follows from 
$f^{\pre}(\Omega' \ssminus F_j') = \Omega \ssminus F_j$,
and along $F_j$ this follows from the invertibility of the 
vertical differential.

Identifying $S_M(F_j) := \nu_M(F_j) / \Rpos$ with $F_j \times S^1$,
the vertical differential of $f$ induces the map $S_M(F_j) \to S_M(F_j')$ 
(which lifts the map $f|_{F_j} \colon F_j \to F_j'$, and )
whose fibre component at each point of $F_j$
is multiplication by $\rho_j(b) \frac{A_j(s,x)}{|A_j(s,x)|}$.
(Here, $z_j = 0$ and $s_j=0$.)

Construction~\ref{P for model} 
identifies the radial blowup $\Omega \odot F_j$ 
with the preimage of $\calO$ in 
$(\C^{j-1} \times (\Rplus \times S^1) \times \C^{k-j}
 \times (\Cx)^{n-k}) \times \T^l \times \R^m$.
Doing this for all $1 \leq j \leq k$,
and identifying each $\Cx$ with $\Rpos \times S^1$
by $z \mapsto (|z|^2,\frac{z}{|z|})$,
we obtain an identification of the simultaneous toric radial blowup 
$P_\Omega$ with $\calO \times \T^d$. 
We similarly identify the simultaneous toric radial blowup $P_{\Omega'}$ 
with $\calO' \times \T^d$.
With these identifications, the map that $f$ induces
becomes the map $G$ of the form described above. 
\end{proof}

Note that the following diagram commutes:
$$ \xymatrix{
   \calO \times \T^d
       \ar[d]^{c^{\std}} \ar[rr]^(.6){G \qquad} && \calO' \times \T^d
              \ar[d]^{{c}^{\std}}  \\
\Omega \ar[d]^{\theta} \ar[rr]^(.6){f \qquad } && \Omega' \ar[d]^{\theta} \\
 \calO \ar[rr]^{g} && \calO' .
} $$
Here, $f$ is a given $\rho$-equivariant diffeomorphism, 
$G$ is the diffeomorphism from Corollary \ref{crucial cor},
and $g$ is the diffeomorphism from \Cref{f to g}.

\begin{proposition} \labell{crucial cor followup}
Fix non-negative integers $n, n', l, l', m, m'$ such that $n+l=n'+l'=:d$.
Let $\rho \colon \T^d \to \T^d$ be an isomorphism.
Let 
$$ f \colon \Omega \to \Omega' $$
be a $\rho$-equivariant diffeomorphism
from an invariant open subset $\Omega$ of $\C^n \times \T^l \times \R^m$
to an invariant open subset $\Omega'$ 
of $\C^{n'} \times \T^{l'} \times \R^{m'}$.
Let $\calO$ and $\calO'$ be the images of $\Omega$ and $\Omega'$
in $\Rplus^n \times \R^m$ and in $\Rplus^{n'} \times \R^{m'}$.
Let
$$ G \colon \calO \times \T^d \to \calO' \times \T^d $$
be the map that is obtained from the map 
$\tilde{f} \colon P_\Omega \to P_{\Omega'}$
that $f$ induces
on the simultaneous toric radial blowups of $\Omega$ and of $\Omega'$
(\Cref{c:psi tilde})
by identifying these simultaneous toric radial blowups 
with $\calO \times \T^d$ and $\calO' \times \T^d$ as in \Cref{P for model}.
Then $G$ is a diffeomorphism.
\end{proposition}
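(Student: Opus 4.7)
The plan is to reduce to \Cref{crucial cor} by working locally. Since being a diffeomorphism is a local property, it suffices to show that for each $p \in \calO \times \T^d$, the map $G$ restricts to a diffeomorphism on a neighborhood of $p$. I will take $y \in \Omega$ to be the image under $c_{\odot}$ of the point in $P_\Omega$ corresponding to $p$, and let $k$ be the number of indices $j \in \{1,\ldots,n\}$ for which $z_j(y) = 0$. By $\rho$-equivariance, the stabilizer of $f(y)$ equals $\rho(\mathrm{Stab}(y))$, so exactly $k$ of the first $n'$ coordinates of $f(y)$ vanish. After permuting $\C$-factors on source and target, and setting $U$ to be a sufficiently small $\T^d$-invariant neighborhood of $y$ and $U' := f(U)$, I arrange that the vanishing coordinates are precisely $z_1,\ldots,z_k$ on the source and $z'_1,\ldots,z'_k$ on the target, so that $U \subset (\C^k \times (\Cx)^{n-k}) \times \T^l \times \R^m$ and $U' \subset (\C^k \times (\Cx)^{n'-k}) \times \T^{l'} \times \R^{m'}$.

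The main technical step will be to show that after these permutations, $\sigma := \rho|_{\T^k \times \{1\}^{d-k}}$ is a signed permutation of the standard basis of $\Z^k$. For each $1 \leq j \leq k$, the coordinate circle $S^1_j \subset \T^k \times \{1\}^{d-k}$ has codimension-$2$ fixed set $U \cap \{z_j = 0\}$ in $U$, which $f$ carries diffeomorphically to the fixed set in $U'$ of $\rho(S^1_j)$. A $1$-parameter subgroup of $\T^k \times \{1\}^{d-k}$ generated by a primitive vector $v = (v_1,\ldots,v_k,0,\ldots,0)$ has fixed set in $U'$ of codimension $2 \cdot \#\{\,i : v_i \neq 0\,\}$, so the codimension-$2$ requirement forces $v = \pm e_{\pi(j)}$ for some $\pi(j) \in \{1,\ldots,k\}$, and hence $\sigma(e_j) = \pm e_{\pi(j)}$.

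I will then further modify the target model by permuting the first $k$ $\C$-factors by $\pi^{-1}$ and applying complex conjugation to those $\C$-factors on which $\sigma$ acts by $-1$. Each such operation is a smooth equivariant diffeomorphism of the target model with respect to the corresponding signed-permutation automorphism of $\T^d$. The composed map $\hat f$ is then equivariant with respect to an isomorphism $\hat\rho$ that fixes $\T^k \times \{1\}^{d-k}$ pointwise, so \Cref{crucial cor} applies and produces a diffeomorphism $\hat G$ on the relevant subsets of $\calO \times \T^d$ and $\hat\calO' \times \T^d$. Since coordinate permutations and complex conjugations on target $\C$-factors descend, via the identifications in \Cref{P for model}, to coordinate permutations and inversions of the corresponding $\T^d$-factors on $\calO' \times \T^d$ (indeed, conjugation on $z'_j$ leaves $|z'_j|^2$ unchanged and sends $z'_j/|z'_j|$ to its inverse), $G$ agrees on a neighborhood of $p$ with $\hat G$ composed with such smooth diffeomorphisms, and is therefore itself a diffeomorphism there.

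The hardest part will be the signed-permutation observation above, together with the bookkeeping to verify that complex conjugation on a $\C$-factor descends via \Cref{P for model} to inversion of the corresponding $\T^d$-factor; granted these, the local reduction to \Cref{crucial cor} is routine.
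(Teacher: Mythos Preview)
Your approach is correct and essentially the same as the paper's: both reduce to \Cref{crucial cor} by permuting the $\C$-coordinates and taking complex conjugates so that $\rho$ fixes $\T^k \times \{1\}^{d-k}$. The paper carries this out globally (taking $k$ to be the number of facets of $\Rplus^n \times \R^m$ that $\calO$ meets) and hides behind the phrase ``after adjusting $\rho$ accordingly'' exactly the signed-permutation argument via codimension-two fixed sets that you spell out; you instead work pointwise with $k$ the depth at the chosen point, which is a harmless variation since $G$ is already a bijection and smoothness is local.
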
 

\begin{proof}
After possibly permuting the coordinates of $\C^n$ in the first model
and of $\C^{n'}$ in the second model,
and after possibly replacing some of the coordinates of $\C^{n'}$
in the second model by their complex conjugates,
and after adjusting $\rho$ accordingly,
we may assume that there is an integer $1 \leq k \leq n$
such that the set $\calO$ 
meets the first $k$ facets of the model $\Rplus^n \times \R^m$
and does not meet any of its other facets,
the set $\calO'$
meets the first $k$ facets of the model $\Rplus^{n'} \times \R^{m'}$
and does not meet any of its other facets,
and $\rho \colon \T^d \to \T^d$ fixes $\T^k \times \{1\}^{d-k}$.
The result of Proposition~\ref{crucial cor followup}
then follows from Corollary~\ref{crucial cor}. 
\end{proof}

\section{The simultaneous toric radial blowup functor, smoothly}
\labell{sec:blowup smoothly}

The quotient functor $\frakM_\iso \to \frakQ_\iso$ (\Cref{def:functor})
requires us to work with a fixed torus, $T$.
We would like to apply this functor to locally standard $T$-charts.
For this, we need to work not only with our fixed torus $T$,
but also with the standard torus $\T^d$, 
and we need to allow different isomorphisms $T \to \T^d$.
We formalize this in the following remark.

\begin{remark} \labell{T and TT}
Let $M$ be a locally standard $T$-manifold,
and let 
$$ \phi \colon U_M \to \Omega $$
be a locally standard $T$-chart, 
equivariant with respect to an isomorphism
$$ \rho \colon T \to \T^d $$ (\Cref{def:locally standard}),
with $\Omega$ an open subset of $\C^n \times \T^l \times \R^m$
for non-negative integers $n,l,m$ with $n+l=d$.
We can now consider $\Omega$ with the $T$-action
that is obtained as the composition of the isomorphism 
$\rho \colon T \to \T^d$
with the given $\T^d$-action.
Then, both $U_M$ and $\Omega$ have $T$-actions,
and the map $\phi$ is $T$-equivariant,
so we can apply to it the simultaneous toric radial blowup functor
(Constructions \ref{c:simultaneous} and \ref{c:psi tilde}).
This yields $T$-sets $P_{U_M}$ and $P_{\Omega}$
and a $T$-equivariant bijection 
$\tilde{\phi} \colon P_{U_M} \to P_{\Omega}$,
such that the following diagram commutes.
$$ \xymatrix{
 P_{U_M} \ar[r]^{\tilde{\phi}} \ar[d]_{c_{\odot}} 
   & P_{\Omega} \ar[d]^{c_{\odot}} \\ 
 U_M \ar[r]^{\phi} & \Omega \, .
}$$
If we apply the radial toric blowup functor to $\Omega$
with its given $\T^d$-action, we obtain the same set $P_{\Omega}$,
with a $\T^d$-action. 
The $T$-action on $P_{\Omega}$
coincides with the composition of~$\rho$
with the $\T^d$-action on $P_{\Omega}$.
As a map from a $T$-manifold to a $\T^d$-manifold,
the map $\tilde{\phi}$ is $\rho$-equivariant.
\eor
\end{remark}

\Cref{T and TT} follows from the following observation.

\begin{remark} \labell{category of pairs}
Fix a positive integer $d$.
Consider the category of pairs $(T,M)$
where $T$ is a $d$ dimensional torus
and $M$ is a locally standard $T$-manifold,
and where a morphism $(T,M) \to (T',M)$ is a pair
$(\rho,\psi)$ where $\rho \colon T \to T'$ is an isomorphism
and $\psi \colon M \to M'$ is a $\rho$-equivariant diffeomorphism.
The simultaneous toric radial blowup construction 
(Constructions \ref{c:simultaneous} and \ref{c:psi tilde})
yields a functor from this category to the category of pairs $(T,X)$,
where $T$ is a $d$ dimensional torus and $X$ is a set with a free $T$-action,
where a morphism $(T,X) \to (T',X')$ is a pair $(\rho,\tpsi)$
with $\rho \colon T \to T'$ an isomorphism and $\tpsi \colon X \to X'$
a $\rho$-equivariant map of sets.
\eor
\end{remark}

\begin{construction} \labell{xi}
Let $M$ be a locally standard $T$ manifold, 
and let 
$$\phi \colon U_M \to \Omega$$ 
be a locally standard $T$-chart,
equivariant with respect to an isomorphism $\rho \colon T \to \T^d$,
with $\Omega$ an open subset of $\C^n \times \T^l \times \R^m$
for non-negative integers $n,l,m$ with $n+l=d$.
By \Cref{T and TT}, 
the simultaneous toric radial blowup functor 
yields a $\rho$-equivariant bijection 
$ \tilde{\phi} \colon P_{U_M} \to P_{\Omega} $
between these set-theoretic simultaneous toric radial blowups,
such that the following diagram commutes,
where $\calO := \theta^\std (\Omega) \subset \Rplus^n \times \R^m$.
$$ \xymatrix{
 P_{U_M} \ar[r]^{\tilde{\phi}} \ar[d]_{c_\odot} 
 \ar@/_2pc/[dd]_{\Pi}
 & P_\Omega \ar[d]^{c_{\odot}} \\
 U_M \ar[r]^{\phi} \ar[d]_{\pi} & \Omega \ar[d]^{\theta^\std} \\ 
 U_Q \ar[r]^{\psi} & \calO \, .
}$$
Composing with the map $\xi_\Omega \colon P_\Omega \to \calO \times \T^d$
of \Cref{cute diagram}, we obtain a $\rho$-equivariant bijection 
$ \xi \colon P_{U_M} \to \calO \times \T^d $
such that the following diagram commutes,
where $c^\std$ and $\theta^\std$ are the model cutting map 
and the model quotient map.
{
$$
\xymatrix{
 P_{U_M} \ar[d]_{c_{\odot}} \ar@/_2pc/[dd]_{\Pi} \ar[r]^{\xi}_{\cong}
 & **[r]{\calO \times \T^d} 
   \ar[d]^{c^\std} \ar@/^2.5pc/[dd]^{\text{projection}} \\
 U_M \ar[r]^{\phi} \ar[d]_{\pi} & \Omega \ar[d]^{\theta^\std} \\
 U_Q \ar[r]^{\psi} & \calO
}
$$
}
\eoc
\end{construction}

\begin{lemma} \labell{xi xi'}
Fix a locally standard $T$-manifold $M$.
For any two bijections $\xi$ and $\xi'$ that 
arise as in Construction~\ref{xi},
the composition $\xi' \circ \xi^{-1}$ is a diffeomorphism
between invariant open subsets of $\Rplus^n \times \R^m \times \T^d$
and $\Rplus^{n'} \times \R^{m'} \times \T^d$
for the corresponding non-negative integers $n,m,n',m',d$.
\end{lemma}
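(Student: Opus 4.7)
The plan is to reduce \Cref{xi xi'} to \Cref{crucial cor followup}. Let $\phi \colon U_M \to \Omega$ and $\phi' \colon U'_M \to \Omega'$ be the locally standard $T$-charts giving rise to $\xi$ and $\xi'$, equivariant with respect to isomorphisms $\rho \colon T \to \T^d$ and $\rho' \colon T \to \T^d$, with $\Omega \subset \C^n \times \T^l \times \R^m$ and $\Omega' \subset \C^{n'} \times \T^{l'} \times \R^{m'}$.

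First, I would restrict everything to the overlap $W := U_M \cap U'_M$. Let $V := \phi(W) \subset \Omega$ and $V' := \phi'(W) \subset \Omega'$; both are $T$-invariant open subsets, and the transition map $f := \phi' \circ \phi^{-1} \colon V \to V'$ is a diffeomorphism which, viewed as a map between $\T^d$-spaces, is equivariant with respect to the isomorphism $\rho' \circ \rho^{-1} \colon \T^d \to \T^d$. By \Cref{open in blowup}, taking the set-theoretic simultaneous toric radial blowup commutes with passing to $T$-invariant open subsets; consequently, the domain and codomain of $\xi' \circ \xi^{-1}$ are $\xi(P_W)$ and $\xi'(P_W)$, which are invariant open subsets of $\theta(V) \times \T^d$ and $\theta(V') \times \T^d$ respectively.

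Second, I would apply functoriality. The equality $\phi'|_W = f \circ \phi|_W$, together with \Cref{r:psi tilde}, yields a commuting diagram $\tilde{\phi}'|_{P_W} = \tilde{f} \circ \tilde{\phi}|_{P_W}$ of equivariant bijections between set-theoretic simultaneous toric radial blowups. Composing with the identifications $\xi_V \colon P_V \to \theta(V) \times \T^d$ and $\xi_{V'} \colon P_{V'} \to \theta(V') \times \T^d$ from \Cref{P for model}, and chasing the commuting diagram of \Cref{cute diagram}, the restriction of $\xi' \circ \xi^{-1}$ to $\xi(P_W)$ becomes precisely the map $G \colon \theta(V) \times \T^d \to \theta(V') \times \T^d$ that \Cref{crucial cor followup} associates to the $(\rho' \circ \rho^{-1})$-equivariant diffeomorphism $f$.

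Finally, \Cref{crucial cor followup} asserts that $G$ is a diffeomorphism, which completes the proof. The main obstacle is not any deep argument but the bookkeeping required to verify that the map $G$ produced by \Cref{crucial cor followup} genuinely coincides with the restriction of $\xi' \circ \xi^{-1}$ after all the identifications; this amounts to carefully tracing through the construction of $\xi$ in \Cref{xi}, together with the compatibility of the set-theoretic blowup with open inclusions provided by \Cref{open in blowup} and the naturality expressed in \Cref{r:psi tilde}.
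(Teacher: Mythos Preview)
Your proposal is correct and follows essentially the same approach as the paper: restrict to the overlap using \Cref{open in blowup}, identify $\xi'\circ\xi^{-1}$ with the map $G$ induced on blowups by the transition diffeomorphism $f=\phi'\circ\phi^{-1}$ via functoriality (\Cref{r:psi tilde}) and the model identification (\Cref{P for model}/\Cref{cute diagram}), and then invoke \Cref{crucial cor followup}. The only cosmetic difference is that the paper first reduces to the case $U_M=U'_M$ before naming $f$, whereas you carry the restricted images $V,V'$ explicitly.
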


\begin{proof}
Let 
$$\xi \colon P_{U_M} \to \calO \times \T^d
\quad \text{ and } \quad
  \xi' \colon P_{U'_M} \to \calO' \times \T^d$$
be two bijections that arise as in Construction~\ref{xi}.
By \Cref{open in blowup},
${P}_{U_M} \cap {P}_{U'_M} = {P}_{U_M \cap U'_M}$.
After replacing $U_M$ and $U'_M$ by $U_M \cap U'_M$,
we may assume that $U_M = U'_M$, 
and we obtain a $\rho$-equivariant bijection 
$$\xi' \circ \xi^{-1} \colon
\calO \times \T^d \to \calO' \times \T^d$$
for an isomorphism $ \rho \colon \T^d \to \T^d $.
Chasing the definitions, we obtain a diagram
$$
\xymatrix{
 \calO \times \T^d \ar[rr]^{ \xi' \circ \xi^{-1} } && \calO' \times \T^d \\
 P_{\Omega} \ar[u] \ar[rr]^{\tphi'\ \circ \tphi^{-1} } && P_{\Omega'} \ar[u] 
}
$$
whose upward arrows are the bijections
that are obtained from \Cref{P for model}
and where $\tphi$ and $\tphi'$ are bijections 
$$\tilde{\phi} \colon P_{U_M} \to P_{\Omega} \quad \text{ and } \quad 
  \tilde{\phi}' \colon P_{U_M} \to P_{\Omega'}$$
that are obtained from locally standard $T$-charts with the same domain 
$$\phi \colon U_M \to \Omega \quad \text{ and } \quad 
  \phi' \colon U_M \to \Omega'$$
by applying the simultaneous toric radial blowup.
By functoriality of the set-theoretic 
simultaneous toric radial blowup (\Cref{r:psi tilde}),
the composition $\tilde{\phi}' \circ \tilde{\phi}^{-1}$
coincides with the map $\tilde{f}$ that is obtained
by applying the simultaneous toric radial blowup to the 
$\rho$-equivariant diffeomorphism
$$ f := \phi' \circ {\phi}^{-1} \colon \Omega \to \Omega'.$$
By Proposition~\ref{crucial cor followup}
with these maps $f$ and $\tilde{f}$, 
the bijection $G := \xi' \circ \xi^{-1}$ is a diffeomorphism.
\end{proof}

\begin{corollary} \labell{cor:structure}
Fix a locally standard $T$-manifold $M$,
let $P_M$ be its set theoretic simultaneous toric radial blowup,
and let $\Pi \colon P_{M} \to Q := M/T$ be the corresponding map
(\Cref{c:simultaneous}, \Cref{free}).
There exists a unique manifold-with-corners structure 
(in particular, a topology) on $P_{M}$ such that, 
for each locally standard $T$-chart $\phi \colon U_M \to \Omega_M$, 
the bijection $\xi \colon P_{U_M} \to \calO \times \T^d$ 
of Construction~\ref{xi} is a diffeomorphisms.
Moreover, with this structure, the map 
$$ \Pi \colon P_M \to Q $$
is a principal $T$-bundle.
\end{corollary}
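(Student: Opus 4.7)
The plan is to invoke \Cref{mfld lemma2} (in its evident manifold-with-corners version, whose proof is identical to the one given, since \Cref{mfld lemma1} already accommodates manifold-with-corners local models). By \Cref{locally standard criterion}, the domains $U_M$ of locally standard $T$-charts on $M$ form an open cover of $M$, so the open subsets $U_Q := \pi(U_M)$ cover the Hausdorff second countable manifold-with-corners $Q := M/T$ of \Cref{l:quotient}, and by \Cref{open in blowup} their preimages $P_{U_M} = \Pi^{-1}(U_Q)$ cover the set $P_M$. For each locally standard $T$-chart $\phi \colon U_M \to \Omega$, \Cref{xi} supplies a bijection $\xi \colon P_{U_M} \to \calO \times \T^d$, whose target is a manifold-with-corners since $\calO$ is open in $\Rplus^n \times \R^m$.

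Next, I would verify the two hypotheses of \Cref{mfld lemma2}. Continuity of $\Pi \circ \xi^{-1} \colon \calO \times \T^d \to U_Q$ follows from the commuting diagram of \Cref{xi}: this composition factors as the projection $\calO \times \T^d \to \calO$ followed by the homeomorphism $\psi^{-1} \colon \calO \to U_Q$. The compatibility requirement --- that for any two such bijections $\xi$ and $\xi'$, the composition $\xi' \circ \xi^{-1}$ restricts to a diffeomorphism between the relevant open subsets of $\Rplus^n \times \R^m \times \T^d$ and $\Rplus^{n'} \times \R^{m'} \times \T^d$ --- is exactly the content of \Cref{xi xi'}. Applying \Cref{mfld lemma2} then endows $P_M$ with a unique manifold-with-corners structure for which every $\xi$ is a diffeomorphism; uniqueness is automatic, since the smooth structure is determined on each $P_{U_M}$ by the fixed target.

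For the principal bundle claim, I would argue as follows. The $T$-action on $P_M$ is free by \Cref{free}, and by \Cref{T and TT} each $\xi$ is $\rho$-equivariant, where $\rho \colon T \to \T^d$ is the splitting associated to the corresponding chart $\phi$. Composing $\xi$ with the diffeomorphism $\psi^{-1} \times \rho^{-1} \colon \calO \times \T^d \to U_Q \times T$ yields a $T$-equivariant diffeomorphism $P_{U_M} \xrightarrow{\cong} U_Q \times T$ over $U_Q$, giving the required local trivializations of $\Pi$ over an open cover of $Q$, so that $\Pi$ is a principal $T$-bundle. The main obstacle is really only the with-corners generalization of \Cref{mfld lemma2}; but this is routine, since the proof reduces directly to \Cref{mfld lemma1}, which is already stated for manifolds-with-corners.
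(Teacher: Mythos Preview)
Your proposal is correct and follows essentially the same approach as the paper's proof: cover $Q$ by images of locally standard $T$-charts, use the bijections $\xi$ from \Cref{xi}, invoke \Cref{xi xi'} for compatibility, apply \Cref{mfld lemma2} to obtain the manifold-with-corners structure, and read off local trivializations from the commuting diagram. Your version is slightly more explicit in verifying the continuity hypothesis of \Cref{mfld lemma2} and in noting that one needs the with-corners variant of that lemma, which the paper uses tacitly.
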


\begin{proof}
By \Cref{locally standard criterion},
we can cover $Q$ by open sets $U_Q$
whose preimages $U_M$ in $M$ are domains of locally standard $T$-charts.
By \Cref{xi}, for each such $U_Q$, we obtain an equivariant bijection 
$\xi \colon P_M\big|_{U_Q} \to \calO \times \T^d$
where $\calO$ is an open subset of some $\Rplus^n \times \R^m$.
By \Cref{xi xi'}, the compositions $\xi' \circ \xi^{-1}$ are smooth.
By \Cref{mfld lemma2}, we obtain a manifold-with-corners structure on $P_M$.
The commuting diagrams
$$ \xymatrix{
 P_M\big|_{U_Q} \ar[d]^{\Pi} \ar[rr]^{\xi}
 && \calO \times \T^d \ar[d]^{\text{projection}} \\
 U_Q \ar[rr]^{\phi} && \calO,
} $$
where $\phi$ is the diffeomorphism
that is induced from the locally standard $T$-chart on $U_M$,
give local trivializations of $\Pi \colon P_M \to Q$.
\end{proof}

\begin{lemma} \labell{lift of diffeo} \ 
Let 
$$ f \colon M \to M' $$ 
be a $T$-equivariant diffeomorphism
of locally standard $T$-manifolds $M$ and $M'$.
Let 
$$\tilde{f} \colon P_{M} \to P_{M'}$$
be the induced map on their simultaneous toric radial blowups
(\Cref{c:psi tilde}).
Equip $P_M$ and $P_{M'}$ with their manifold-with-corners
structures of \Cref{cor:structure}.
Then $\tilde{f}$ is a diffeomorphism.
\end{lemma}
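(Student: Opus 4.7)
The plan is to check smoothness of $\tilde{f}$ locally, using the manifold-with-corners structure on $P_M$ and $P_{M'}$ as pinned down in \Cref{cor:structure}, and then to apply the same argument to $f^{-1}$ to upgrade ``smooth'' to ``diffeomorphism''.

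Fix a point $p \in P_M$, let $x := c_\odot(p) \in M$, and let $x' := f(x) \in M'$. By \Cref{locally standard criterion}, we may choose locally standard $T$-charts
$$\phi \colon U_M \to \Omega \quad \text{ and } \quad \phi' \colon U'_{M'} \to \Omega',$$
with $x \in U_M$ and $x' \in U'_{M'}$, equivariant with respect to isomorphisms $\rho_1 \colon T \to \T^d$ and $\rho_2 \colon T \to \T^d$ respectively. Shrinking $U_M$, we may assume that $f(U_M) \subset U'_{M'}$. Because $f$ is $T$-equivariant while $\phi, \phi'$ are equivariant with respect to $\rho_1, \rho_2$, the composition
$$ h := \phi' \circ f|_{U_M} \circ \phi^{-1} \colon \Omega \to \Omega' $$
is a $\rho$-equivariant diffeomorphism for $\rho := \rho_2 \circ \rho_1^{-1} \colon \T^d \to \T^d$.

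Now apply the set-theoretic simultaneous toric radial blowup functor. By functoriality (\Cref{r:radial blowup map} and \Cref{r:psi tilde}), the relation $f|_{U_M} = \phi'^{-1} \circ h \circ \phi$ lifts to
$$ \tilde{f}|_{P_{U_M}} \;=\; \tilde{\phi'}^{-1} \circ \tilde{h} \circ \tilde{\phi} \colon P_{U_M} \to P_{U'_{M'}} . $$
The charts on $P_M$ and $P_{M'}$ provided by \Cref{cor:structure} are $\xi := \xi_\Omega \circ \tilde{\phi}$ and $\xi' := \xi_{\Omega'} \circ \tilde{\phi'}$, where $\xi_\Omega \colon P_\Omega \to \calO \times \T^d$ and $\xi_{\Omega'} \colon P_{\Omega'} \to \calO' \times \T^d$ are the bijections of \Cref{P for model}. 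Consequently,
$$ \xi' \circ \tilde{f}|_{P_{U_M}} \circ \xi^{-1} \;=\; \xi_{\Omega'} \circ \tilde{h} \circ \xi_\Omega^{-1} , $$
which is exactly the map $G$ produced by \Cref{crucial cor followup} applied to the $\rho$-equivariant diffeomorphism $h$. That proposition asserts that $G$ is a diffeomorphism of invariant open subsets of $\Rplus^n \times \R^m \times \T^d$ and $\Rplus^{n'} \times \R^{m'} \times \T^d$. Thus $\tilde{f}$ is smooth near $p$; since $p$ was arbitrary, $\tilde{f}$ is smooth on all of $P_M$.

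Applying the same argument to $f^{-1} \colon M' \to M$ shows that $\tilde{f^{-1}}$ is smooth, and by functoriality $\tilde{f^{-1}} = \tilde{f}^{-1}$. Hence $\tilde{f}$ is a diffeomorphism. The main technical input --- and the only nontrivial obstacle --- is already encapsulated in \Cref{crucial cor followup}; the present lemma is essentially a ``globalization via charts'' of that local statement, with functoriality of the set-theoretic construction doing the bookkeeping needed to identify $\xi' \circ \tilde{f} \circ \xi^{-1}$ with the model map $G$.
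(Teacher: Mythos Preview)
Your argument is correct, but it takes a longer route than the paper. The paper observes that, given a locally standard $T$-chart $\phi \colon U_M \to \Omega$, one can simply \emph{transport} it by $f$ to obtain the chart $\phi' := \phi \circ f^{-1} \colon f(U_M) \to \Omega$ on $M'$; in the resulting pair of charts $\xi,\xi'$ on $P_M,P_{M'}$, the local expression $\xi' \circ \tilde f \circ \xi^{-1}$ is literally the identity map on $\calO \times \T^d$, so smoothness is immediate and no appeal to \Cref{crucial cor followup} is needed. Your version instead picks an \emph{independent} chart $\phi'$ on $M'$, which forces you to identify the local expression with the map $G$ of \Cref{crucial cor followup} and invoke that proposition. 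Both are valid; the paper's trick of pushing the chart through $f$ is shorter and isolates all the analytic work in the earlier \Cref{xi xi'} (which already relied on \Cref{crucial cor followup} to build the smooth structure). A small wrinkle in your write-up: after shrinking $U_M$ you have $f(U_M) \subsetneq U'_{M'}$ in general, so $h$ is a diffeomorphism onto an open subset of $\Omega'$ rather than onto $\Omega'$; you should replace $\Omega'$ by $h(\Omega)$ before invoking \Cref{crucial cor followup}.
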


\begin{proof}
Let 
$$ \phi \colon U_M \to \Omega $$
be a locally standard $T$-chart.
After possibly shrinking $U_M$, we may assume
that $f|_{U_M}$ is a diffeomorphism of $U_M$
with an open subset $U_{M'}$ of $M'$.
The composition
$$ \phi' := \phi \circ f^{-1} \colon U_{M'} \to \Omega $$
is then a locally standard $T$-chart on $M'$.
Chasing the definitions, we obtain a diagram
$$ \xymatrix{
\calO \times \T^d \ar[r]^{\text{Id}} & \calO' \times \T^d \\
 P_{U_M} \ar[u]^{\xi} \ar[r]^{\tilde{f}} & P_{U_{M'}} \ar[u]_{\xi'}.
}
$$
where $\xi$ and $\xi'$ are diffeomorphisms.
Because such sets $P_{U_M}$ form an open cover of $P_M$, 
we conclude that $\tilde{f} \colon P \to P_{M'}$ 
is a diffeomorphism, as required.
\end{proof}

We are now ready to construct the simultaneous toric radial blowup functor
$\frakM \to \frakP$. 

\begin{construction} \labell{M to P}
We now construct a functor from the category $\frakM$ to the category $\frakP$.
To each locally standard $T$-manifold $M$, we associate
the principal $T$-bundle $\Pi \colon P_M \to Q$
as in \Cref{cor:structure}, 
where $Q := M/T$ is the 
decorated manifold-with-corners as in \Cref{def:functor}. 
To each equivariant diffeomorphism 
$f \colon M \to M'$
of locally standard $T$-manifolds, 
we associate the corresponding map $\tilde{f} \colon P_M \to P_{M'}$.
By \Cref{lift of diffeo}, $\tilde{f}$ is an equivariant local diffeomorphism.
\eoc
\end{construction}

\begin{lemma} \labell{natural} \ 
For each locally standard $T$-manifold~$M$,
there exists a (necessarily unique, necessarily equivariant) diffeomorphism
$\alpha_M \colon (P_M)_{\cut} \to M$
such that the following diagram commutes,
where $c$ is the cutting map.
\begin{equation} \labell{alpha M}
\xymatrix{
 P_M \ar[d]_{c_\odot} \ar[rr]^{c} && (P_M)_{\cut} \ar@{-->}[d]^{\alpha_M} \\ 
 M \ar[rr]^{\text{identity}} && M
}
\end{equation}
\end{lemma}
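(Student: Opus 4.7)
The plan is to exploit the fact that the equivalence relation used to define $(P_M)_{\cut}$ coincides fiber-by-fiber with the level sets of $c_\odot$, and then to identify everything in a locally standard $T$-chart.

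First, I would check that given $p,p'\in P_M$ with $\Pi(p)=\Pi(p')=:x$, the condition $c_\odot(p)=c_\odot(p')$ is equivalent to $p\sim p'$ in the sense of \Cref{topological cut space}. By \Cref{free}, the $T$-stabilizer of any point of $M$ acts freely and transitively on its $c_\odot$-fiber; by \Cref{Ix Tx} together with \Cref{Tx}, this stabilizer is exactly the subtorus $T_{(x)}$. Hence the $c_\odot$-fibers coincide with the $\sim$-classes. Since $c_\odot$ is surjective (any $m\in M$ lifts by picking any point of $S_M(F_\etahat)|_m$ for each $\etahat\in I_x$, and the basepoint $m$ itself for every other $\etahat$), this produces a unique bijection $\alpha_M\colon(P_M)_{\cut}\to M$ making \eqref{alpha M} commute, and equivariance is inherited from $c_\odot$ and $c$.

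Second, I would verify that $\alpha_M$ is a diffeomorphism by working in locally standard charts. Cover $M$ by domains $U_M$ of locally standard $T$-charts $\phi\colon U_M\to\Omega\subset\C^n\times\T^l\times\R^m$. \Cref{xi} and \Cref{cor:structure} give a diffeomorphism $\xi\colon P_{U_M}\to\calO\times\T^d$ satisfying $\phi\circ c_\odot=c^{\std}\circ\xi$, where $\calO:=\theta(\Omega)$; moreover $\xi$ realizes $P_{U_M}\to U_Q$ as a trivialized principal $T$-bundle whose base carries the unimodular chart induced by $\phi$. Feeding this trivialization together with that unimodular chart into the construction of \Cref{cut smooth} produces a distinguished chart $\phi'\colon (P_{U_M})_{\cut}\to\Omega$ satisfying $\phi'\circ c=c^{\std}\circ\xi$, and by \Cref{cut smooth revised} the manifold structure on $(P_M)_{\cut}$ is defined precisely so that each such $\phi'$ is a diffeomorphism. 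Combining the identities $\phi\circ\alpha_M\circ c=\phi\circ c_\odot=c^{\std}\circ\xi=\phi'\circ c$ with surjectivity of $c$ gives $\phi\circ\alpha_M|_{(P_{U_M})_{\cut}}=\phi'$, so $\alpha_M|_{(P_{U_M})_{\cut}}=\phi^{-1}\circ\phi'$ is a diffeomorphism onto $U_M$. Using \Cref{UM is UP cut} to identify $(P_{U_M})_{\cut}$ with the open subset $\alpha_M^{-1}(U_M)\subset(P_M)_{\cut}$, and letting $U_M$ vary over an open cover of $M$, we conclude that $\alpha_M$ is a global diffeomorphism.

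The only substantive obstacle is the compatibility check in the third step: one must verify that the pair $(\xi,\phi)$ arising from \Cref{xi} really is of the form feeding into \Cref{cut smooth}, i.e.\ that $\xi$ is a local trivialization of $P_M\to Q$ matched with a unimodular chart on~$Q$. This is immediate from the construction in \Cref{cor:structure}, where the principal bundle structure on $P_M\to Q$ is defined precisely so that the maps $\xi$ of \Cref{xi} become local trivializations. Once this observation is made, everything else is a diagram chase relying on \Cref{product to Omega} (which secured the topological identification of $(\calO\times\T^d)/{\sim}$ with $\Omega$) and on the uniqueness clauses in \Cref{cut smooth} and \Cref{cor:structure}.
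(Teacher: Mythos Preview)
Your proposal is correct and follows essentially the same route as the paper: reduce to locally standard $T$-charts, identify $P_{U_M}$ with $\calO\times\T^d$ via the map $\xi$ of \Cref{xi}, feed this trivialization into \Cref{cut smooth} to obtain a chart $\phi'$ on $(P_{U_M})_{\cut}$, and set the local $\alpha$ to be $\phi^{-1}\circ\phi'$. The one difference is in how you establish the global map before checking smoothness. The paper argues uniqueness by density of the free part (where $c$ is a homeomorphism) and then glues the local diffeomorphisms by continuity; you instead directly identify the $\sim$-classes of \Cref{topological cut space} with the fibers of $c_\odot$, obtaining a global bijection $\alpha_M$ at once. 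Your route is slightly cleaner, since the ``fitting together'' step becomes automatic. The one point you pass over quickly is the equality $T_{(\pi(m))}=T_m$ (the subtorus of \Cref{Tx} for the labelling $\lambdahat_M$ equals the actual stabilizer of $m$); this is immediate from \Cref{l:labelling} and \Cref{structure on chart}\eqref{preimage stab}, but it is worth saying explicitly, since that identification is exactly what makes the two equivalence relations agree.
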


\begin{proof}
Because $c$ restricts to an equivariant homeomorphism 
between the preimages of $M_{\free}$ in $P_M$ and in $(P_M)_{\cut}$,
a continuous map $\alpha_M$ such that the diagram commutes
is necessarily unique and equivariant.

The blowup construction and the cutting construction
commute with open inclusions 
(\Cref{UM is UP cut}, \Cref{open in blowup}):
For any invariant open subset $U_M$ of $M$, 
\ $P_{U_M}$ is an open subset of $P_M$,
\ ${(P_{U_M})}_{\cut}$ is an open subset of ${(P_M)}_{\cut}$,
and the composition of 
the cutting map $c \colon P_{U_M} \to {(P_{U_M})}_{\cut}$
with the inclusion ${(P_{U_M})}_{\cut} \subset {(P_M)}_{\cut}$
is the restriction of the cutting map $c \colon P_M \to {(P_M)}_{\cut}$.
(Since the projection map is given by $\Pi = \pi \circ c_{\odot}$
(\Cref{free}),
for $U_Q = \pi(U_M)$,
the composition of the projection map $\Pi \colon P_{U_M} \to U_Q$
with the inclusion $U_Q \subset Q$
is the restriction of the projection map $\Pi \colon P_M \to Q$.)

Therefore, it is enough to show that, 
if $U_M \subset M$ is the domain of a locally standard $T$-chart,
then there exists a diffeomorphism $\alpha$
such that the following diagram commutes.
$$ \xymatrix{
 P_{U_M} \ar[d]_{c_\odot} \ar[rr]^{c} && (P_{U_M})_{\cut} 
\ar@{-->}[d]^{\alpha} \\ 
 U_M \ar[rr]^{\text{identity}} && U_M
}$$
Indeed, because such diffeomorphisms $\alpha$ coincide
on the intersections of their domains with the preimage of $M_{\free}$, 
by continuity they will fit together into a smooth map 
$\alpha_M \colon (P_M)_{\cut} \to M$
such that the diagram \eqref{alpha M} commutes.

Let $ \phi \colon U_M \to \Omega $
be a locally standard $T$-chart, equivariant with respect to an isomorphism 
$ \rho \colon T \to \T^d $,
with $\Omega \subset \C^n \times \T^l \times \R^m$.
Let $U_Q := \pi(U_M)$, let $\calO := \theta(\Omega)$ 
be the image of $\Omega$ in $\Rplus^n \times \R^m$,
let $\psi \colon U_Q \to \calO$ the diffeomorphism that is induced from $\phi$
(\Cref{structure on chart}),
and let $\Pi = \pi \circ c_{\odot} \colon P_{U_M} \to U_Q$
be the projection map.
By \Cref{xi}, we obtain a commuting diagram
{
$$
\xymatrix{
 P_{U_M} \ar[d]_{c_{\odot}} \ar@/_2pc/[dd]_{\Pi} \ar[r]^{\xi}_{\cong}
 & **[r]{\calO \times \T^{n+l}} 
   \ar[d]^{c^\std} \ar@/^2.5pc/[dd]^{\text{projection}} \\
 U_M \ar[r]^{\phi} \ar[d]_{\pi} & \Omega \ar[d]^{\theta^\std} \\
 U_Q \ar[r]^{\psi} & \calO
}
$$
}
By the definition of the smooth structure on $P_M$,
the map $\xi$ is a $\rho$-equivariant diffeomorphism.
In particular, the map $\xi$ is a trivialization 
of the principal bundle $\Pi \colon P_{U_M} \to U_Q$.
Applying \Cref{cut smooth} to this trivialization,
and by the definition of the smooth structure on the cut space,
we obtain a diffeomorphism 
$\phi_{\text{new}} \colon {(P_{U_M})}_{\cut} \to \Omega$
such that the following diagram commutes,
where $c^\std$ is the model cutting map 
and $\theta^\std$ is the model quotient map.
(Here, the $U_P$ of \Cref{cut smooth} is our $P_{U_M}$,
the $\tilde{\psi}$ of \Cref{cut smooth} is our $\xi$, 
and the $U_M$ of \Cref{cut smooth} is our $(P_{U_M})_{\cut}$.
In particular the $U_M$ of \Cref{cut smooth} is not our $U_M$.)
{
$$ \xymatrix{
 P_{U_M} \ar[r]^{\xi} \ar[d]_{c} 
   & **[r] {\calO \times \T^{n+l}} \ar[d]^{c^\std} \\
 {(P_{U_M})}_{\cut} \ar[r]^{\phi_{\text{new}}} \ar[d] 
   & \Omega \ar[d]^{\theta^\std} \\
 U_Q \ar[r]^{\psi} & \calO
}
$$
}
Combining these last two diagrams, 
we obtain the following commuting diagram
where all the horizontal arrows are diffeomorphisms.
$$
\xymatrix{
 P_{U_M} \ar[r]^{\xi} \ar[d]_{c} 
   & {\quad \calO \times \T^{n+l}} \ar[d] 
 & P_{U_M} \ar[d]^{c_{\odot}} \ar[l]_{\quad \xi} \\
 {(P_{U_M})}_{\cut} \ar[r]^{\phi_{\text{new}}} \ar[d] 
   & \Omega \ar[d] 
 & U_M \ar[d]^{\pi} \ar[l]_{\phi} \\
 U_Q \ar[r]^{\psi} & \calO 
 & U_Q \ar[l]_{\psi} 
}
$$
The composition 
$$\alpha:= \phi^{-1} \circ \phi_{\text{new}} \colon (P_{U_M})_{\cut} \to U_M$$
is then an equivariant diffeomorphism, and we have a commuting diagram
$$ \xymatrix{
 & P_{U_M} \ar[dl]_{c} \ar[dr]^{c_{\odot}} & \\
 (P_{U_M})_{\cut} \ar[rr]^{\alpha} && U_M \, ,
}$$
as required.
\end{proof}

\begin{lemma} \labell{MPM to I}
The map $M \mapsto \alpha_M$ of 
Lemma~\ref{natural} is a natural isomorphism 
from the composition $\frakM_\invt \to \frakP_\invt \to \frakM_\invt$
to the identity functor on $\frakM_\invt$.
\end{lemma}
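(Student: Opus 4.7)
The plan is the following. \Cref{natural} already provides the components of the prospective natural transformation: for each locally standard $T$-manifold $M$ an equivariant diffeomorphism $\alpha_M \colon (P_M)_{\cut} \to M$ satisfying $\alpha_M \circ c = c_{\odot}$, where $c$ is the cutting map and $c_{\odot}$ is the blowup projection. Since each $\alpha_M$ is already known to be an isomorphism in $\frakM_\invt$, all that remains is to verify naturality in $M$. That is, for every equivariant diffeomorphism $f \colon M \to M'$, I need to show that the square
$$
\xymatrix{
 (P_M)_{\cut} \ar[r]^{\tilde{f}_{\cut}} \ar[d]_{\alpha_M}
  & (P_{M'})_{\cut} \ar[d]^{\alpha_{M'}} \\
 M \ar[r]^{f} & M'
}
$$
commutes, where $\tilde{f} \colon P_M \to P_{M'}$ is the image of $f$ under the simultaneous toric radial blowup functor (\Cref{M to P}) and $\tilde{f}_{\cut}$ is then obtained from $\tilde{f}$ by the cutting functor.

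My approach is a pure diagram chase. I would assemble four identities that are already part of the preceding constructions: $\alpha_M \circ c = c_{\odot}$ and $\alpha_{M'} \circ c = c_{\odot}$ (from \Cref{natural}); $\tilde{f}_{\cut} \circ c = c \circ \tilde{f}$ (the descent property built into the cutting functor on morphisms, \Cref{cutting on maps}); and $c_{\odot} \circ \tilde{f} = f \circ c_{\odot}$ (the defining property of $\tilde{f}$, from \Cref{c:psi tilde}). Composing these gives
$$ \alpha_{M'} \circ \tilde{f}_{\cut} \circ c \;=\; \alpha_{M'} \circ c \circ \tilde{f} \;=\; c_{\odot} \circ \tilde{f} \;=\; f \circ c_{\odot} \;=\; f \circ \alpha_M \circ c. $$
Finally, because $c \colon P_M \to (P_M)_{\cut}$ is surjective (it is a topological quotient map by \Cref{topological cut space}), I can cancel it on the right to conclude $\alpha_{M'} \circ \tilde{f}_{\cut} = f \circ \alpha_M$, which is exactly the commutativity of the naturality square.

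I do not anticipate any substantive obstacle. All the nontrivial geometric content --- the smooth structures on the cut spaces and on the simultaneous blowups, the identification $(P_M)_{\cut} \cong M$ in \Cref{natural}, and the functoriality of both the cutting and the blowup constructions on morphisms --- has been carried out in the preceding lemmas, so the present lemma is a formal assembly. The only point where a moment of care is needed is to recognize that the four intermediate identities above hold on the nose at the level of maps of sets, rather than merely up to some implicit identification; this is automatic because both functors act on the common underlying set $P_M$ via the same set-theoretic recipe, with smoothness and topology imposed separately and compatibly.
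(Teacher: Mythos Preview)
Your proof is correct and is essentially the same argument as the paper's: both assemble the identities $\alpha_M \circ c = c_{\odot}$, \ $\tilde{f}_{\cut} \circ c = c \circ \tilde{f}$, and $c_{\odot} \circ \tilde{f} = f \circ c_{\odot}$, and then cancel the surjective cutting map $c$. The paper presents this as a two-square diagram chase (top square commutes by the cutting functor, outer rectangle commutes by \Cref{natural} and \Cref{c:psi tilde}, hence the bottom square commutes because $c$ is onto), while you write out the same chain of equalities explicitly.
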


\begin{proof}
Let $f \colon M_1 \to M_2$ be an equivariant diffeomorphism,
let $\tilde{f} \colon P_{M_1} \to P_{M_2}$ be the diffeomorphism 
that is obtained from $f$ 
by the simultaneous toric radial blowup functor, and let 
$\tilde{f}_{\cut} \colon (P_{M_1})_{\cut} \to (P_{M_2})_{\cut}$
be the diffeomorphism that is induced from $\tilde{f}$ by the cutting functor.  
We need to show that 
$$\alpha_{M_2} \circ \tilde{f}_{\cut} = f \circ \alpha_{M_1}.$$
Consider the diagram 
$$
\xymatrix{
  P_{M_1} \ar[r]^{\tilde{f}} 
\ar@/_4pc/[dd]_{c_{\odot}}
\ar[d]_{c_1} 
& **[r] P_{M_2} 
\ar[d]^{c_2}  \ar@/^4pc/[dd]^{c_{\odot}}
\\
 (P_{M_1})_{\cut} \ar[r]^{{\tilde{f}}_{\cut}} 
\ar[d]_{\alpha_{M_1}}
& **[r] (P_{M_2})_{\cut} 
\ar[d]^{\alpha_{M_2}} 
\\
M_1 \ar[r]^{f} & **[r] M_2,
}
$$
where $c_1$ and $c_2$ are the cutting maps,
and where $\alpha_{M_1}$ and $\alpha_{M_2}$ are as in \Cref{natural}.
By \Cref{natural}, the left and right portions of the diagram commute,
so the big rectangle in the diagram commutes.
By properties of the cutting functor,
the top square of the diagram commutes.
Because the cutting maps $c_1$ and $c_2$ are onto, 
the bottom square of the diagram also commutes,
which is what we needed to show.
\end{proof}

\begin{corollary}
The cutting functor $\frakP_\invt \to \frakM_\invt$ is essentially surjective.
\end{corollary}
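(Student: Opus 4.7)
The plan is to derive this as an immediate consequence of the natural isomorphism just established in \Cref{MPM to I}. Recall that, by definition, a functor $F \colon \frakA \to \frakB$ is essentially surjective if for every object $B \in \frakB$ there is an object $A \in \frakA$ with $F(A) \cong B$ in $\frakB$. Here $F$ is the cutting functor $P \mapsto \Pcut$, and we need, for each locally standard $T$-manifold $M$, some principal $T$-bundle $P$ over a decorated manifold-with-corners whose cut is equivariantly diffeomorphic to $M$.

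The obvious candidate for $P$ is the output $P_M$ of the simultaneous toric radial blowup functor $\frakM_\invt \to \frakP_\invt$ of \Cref{M to P}. Applied to $M$, this yields a principal $T$-bundle $\Pi \colon P_M \to Q := M/T$ over the decorated manifold-with-corners $Q$, so $P_M$ is a legitimate object of $\frakP_\invt$. Applying the cutting functor then produces $(P_M)_{\cut} \in \frakM_\invt$, and by \Cref{natural} there is a $T$-equivariant diffeomorphism
\[
\alpha_M \colon (P_M)_{\cut} \to M
\]
that fits into the commuting diagram \eqref{alpha M}. In particular, $\alpha_M$ is an isomorphism in $\frakM_\invt$, so $(P_M)_{\cut} \cong M$ in $\frakM_\invt$, as required.

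Since the preceding lemma does all the work, there is no real obstacle at this stage; the only thing to verify is that the existence of one object $P_M$ with $(P_M)_{\cut} \cong M$ is literally the content of essential surjectivity. For a uniform statement one may note (though it is not needed here) that \Cref{MPM to I} gives more than essential surjectivity: it exhibits the family $\{\alpha_M\}_{M \in \frakM_\invt}$ as a natural isomorphism between the composition $\frakM_\invt \to \frakP_\invt \to \frakM_\invt$ and the identity functor, which in particular implies that on isomorphism classes the cutting functor is surjective. Combined with Part~\eqref{natural iso P M P} of \Cref{natural isos}, this is exactly \Cref{equiv of cat}; the corollary in question is just the weaker, one-sided consequence.
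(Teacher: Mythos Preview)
Your argument is correct and matches the paper's approach: the corollary is stated immediately after \Cref{MPM to I} without proof, precisely because it follows at once from the existence of the diffeomorphism $\alpha_M \colon (P_M)_{\cut} \to M$ of \Cref{natural}.
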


This completes the arguments that we needed
for our classification of locally standard $T$-manifolds.
We claim that, moreover, 
the cutting functor $\frakP \to \frakM$ is an isomorphism of categories.

\begin{lemma} \labell{natural2}
For each principal bundle $\Pi \colon P \to Q$
over a decorated manifold-with-corners, let $M := \Pcut$.  
Then there exists a (necessarily unique, necessarily equivariant) 
diffeomorphism
$\beta_P \colon P_M \to P$ 
such that the following diagram commutes, where $c$ is the cutting map.
$$ \xymatrix{
 P_M \ar[d]_{c_\odot} \ar[rr]^{\beta_P} && P\ar[d]^{c} \\
 M \ar[rr]^{\text{identity}} && M \,,
}$$
\end{lemma}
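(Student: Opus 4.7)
The plan is to build $\beta_P$ by the same local-to-global pattern used in \Cref{natural}, with the roles of $P$ and $P_M$ exchanged. Write $Q^\circ := \intQ$ and $M_{\free} := \pi^{-1}(Q^\circ)$; by \Cref{l:cM}, $M_{\free}$ is exactly the locus of points of $M$ with trivial $T$-stabilizer. Over $Q^\circ$, all the subtori $T_{(x)}$ of \Cref{Tx} are trivial, so the cutting map $c$ restricts to a $T$-equivariant diffeomorphism $\Pi^{-1}(Q^\circ) \to M_{\free}$; analogously, using \Cref{free}, $c_\odot$ restricts to a $T$-equivariant diffeomorphism from its preimage of $M_{\free}$ in $P_M$ onto $M_{\free}$. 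Hence on the preimage of $M_{\free}$ any continuous $\beta_P$ satisfying $c \circ \beta_P = c_\odot$ is forced to equal $c^{-1} \circ c_\odot$, and is automatically $T$-equivariant there. Because $Q^\circ$ is open and dense in $Q$, this establishes the asserted uniqueness and automatic equivariance, and reduces existence to a local construction together with a compatibility check.

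For existence, cover $Q$ by open sets $U_Q$ that are simultaneously the domain of a unimodular chart $(\psi,(\eta_1,\ldots,\eta_d))$, $\psi \colon U_Q \to \calO \subset \Rplus^n \times \R^m$, and the base of a local trivialization of $P$; such a cover exists by \Cref{def:unimodular} and local triviality of $\Pi$. Composing the trivialization with $\psi \times \rho_{\alpha_1,\ldots,\alpha_d}$, where $\alpha_1,\ldots,\alpha_d$ is dual to $\eta_1,\ldots,\eta_d$, produces a $\rho_{\alpha_1,\ldots,\alpha_d}$-equivariant diffeomorphism $\tpsi \colon U_P \to \calO \times \T^d$, where $U_P := \Pi^{-1}(U_Q)$. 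By \Cref{UM is UP cut} and \Cref{cut smooth}, there is a unique locally standard $T$-chart $\phi \colon U_M \to \Omega$ with $\phi \circ c = c^{\std} \circ \tpsi$, where $U_M \subset M$ is the preimage of $U_Q$. Applying \Cref{xi} to $\phi$ yields a $\rho_{\alpha_1,\ldots,\alpha_d}$-equivariant diffeomorphism $\xi \colon P_{U_M} \to \calO \times \T^d$ with $\phi \circ c_\odot = c^{\std} \circ \xi$, and by \Cref{open in blowup}, $P_{U_M}$ is an open subset of $P_M$. Set
\[
\beta_P\big|_{P_{U_M}} \;:=\; \tpsi^{-1} \circ \xi \;\colon\; P_{U_M} \longrightarrow U_P.
\]
This is a $T$-equivariant diffeomorphism, and combining the two commuting squares gives $c \circ \beta_P = c_\odot$ on $P_{U_M}$.

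On the overlap of any two such local patches, the two candidate maps agree on the preimage of $M_{\free}$ (both equal $c^{-1}\circ c_\odot$ there by the uniqueness discussion above), hence agree everywhere by density of the free locus and continuity of both candidates. The local equivariant diffeomorphisms therefore glue to a single $T$-equivariant diffeomorphism $\beta_P \colon P_M \to P$ satisfying $c \circ \beta_P = c_\odot$, as required.

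The main obstacle is not an obstacle but a bookkeeping point: one must verify that the smooth structure on $P_M$ is compatible in different charts, so that each local $\tpsi^{-1} \circ \xi$ is genuinely a diffeomorphism. This is already built in, since by \Cref{cor:structure} the maps $\xi$ of \Cref{xi} were taken as the defining charts of the manifold-with-corners structure on $P_M$; thus the proof is essentially an orchestration of the compatibilities encoded in Constructions~\ref{xi} and \ref{cor:structure} together with the density argument on the free locus.
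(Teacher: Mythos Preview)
Your proof is correct and follows the same approach as the paper: you define $\beta_P$ locally as $\tpsi^{-1}\circ\xi$, invoke \Cref{cor:structure} to see each $\xi$ is a diffeomorphism, and glue via density of the free locus. The paper's own proof is a one-line pointer to \Cref{cute diagram} and \Cref{cor:structure}, leaving exactly these details implicit; your write-up simply unpacks them in the style of the proof of \Cref{natural}.
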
 

\begin{proof}
This is a consequence of \Cref{cute diagram} and \Cref{cor:structure}. 
\end{proof}

\begin{lemma} \labell{PMP to I}
The map $P \mapsto \beta_P$ of \Cref{natural2}
is a natural isomorphism 
from the composed functor $\frakP \to \frakM \to \frakP$
to the identity functor on $\frakP$.
\end{lemma}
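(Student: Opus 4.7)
The plan is to mirror the proof of \Cref{MPM to I}. Fix an isomorphism $g \colon P_1 \to P_2$ in $\frakP_\invt$, covering a diffeomorphism $\psi \colon Q_1 \to Q_2$. Write $M_i := (P_i)_{\cut}$, let $c_i \colon P_i \to M_i$ denote the cutting map, let $g_{\cut} \colon M_1 \to M_2$ be the equivariant diffeomorphism that $g$ induces via the cutting functor (so that $c_2 \circ g = g_{\cut} \circ c_1$, by \Cref{cutting on maps}), and let $\widetilde{g_{\cut}} \colon P_{M_1} \to P_{M_2}$ be the equivariant diffeomorphism that $g_{\cut}$ induces via the simultaneous toric radial blowup functor (so that $c_{\odot} \circ \widetilde{g_{\cut}} = g_{\cut} \circ c_{\odot}$, by \Cref{c:psi tilde} and \Cref{lift of diffeo}). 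The identity to verify is
\[ \beta_{P_2} \circ \widetilde{g_{\cut}} \;=\; g \circ \beta_{P_1} \qquad \text{as maps } P_{M_1} \to P_2. \]

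First I would carry out a diagram chase. Both sides are $T$-equivariant diffeomorphisms $P_{M_1} \to P_2$. Using the defining property $c_i \circ \beta_{P_i} = c_{\odot}$ from \Cref{natural2} together with the two naturality squares recalled above, one computes
\[ c_2 \circ \beta_{P_2} \circ \widetilde{g_{\cut}} \;=\; c_{\odot} \circ \widetilde{g_{\cut}} \;=\; g_{\cut} \circ c_{\odot} \;=\; g_{\cut} \circ c_1 \circ \beta_{P_1} \;=\; c_2 \circ g \circ \beta_{P_1}, \]
so the two maps agree after post-composition with $c_2$. To promote this to a genuine equality, I would use that $c_2$ restricts to a $T$-equivariant diffeomorphism from $\Pi_2^{-1}(\intQ_2)$ onto $(M_2)_{\free}$, and that $\psi$ preserves depth; thus on the preimage in $P_{M_1}$ of $\intQ_1$ both sides land in $\Pi_2^{-1}(\intQ_2)$ and coincide, and by continuity they agree everywhere on $P_{M_1}$.

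The main technical ingredients — \Cref{natural2}, the functoriality of cutting, and the functoriality of simultaneous toric radial blowup — are already in place, so I do not expect a substantive obstacle. The only real work beyond the diagram chase is the density-plus-continuity step, which amounts to the standard fact that a continuous $T$-equivariant map from a principal $T$-bundle to another is determined by its restriction to the preimage of a dense open subset of the base. Combined with \Cref{MPM to I}, this lemma establishes the equivalence of categories asserted in \Cref{equiv of cat}.
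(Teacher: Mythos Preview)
Your proposal is correct and follows essentially the same approach as the paper's proof. Both arguments set up the three-row diagram with $P_{M_i}$, $P_i$, $M_i$, use \Cref{natural2} for the side triangles, the cutting functor for the bottom square, and \Cref{c:psi tilde} for the outer rectangle, then deduce the top square commutes by observing that both candidate maps agree after composing with $c_2$ and invoking that $c_2$ is injective on the open dense preimage of the interior; your write-up simply spells out the chain of equalities and the density step more explicitly than the paper does.
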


\begin{proof}
Let $\hat{\psi} \colon P_1 \to P_2$ be an equivariant diffeomorphism.
Let $\psi \colon M_1 \to M_2$ be the diffeomorphism
that is obtained from $\hat{\psi}$ by the cutting functor.
Let $\tpsi \colon P_{M_1} \to P_{M_2}$ be the diffeomorphism 
that is obtained from $\psi$
by the simultaneous toric radial blowup functor.
We need to show that
$$\hat{\psi} \circ \beta_{P_1} = \beta_{P_2} \circ \tilde{\psi}.$$
Consider the diagram
$$ \xymatrix{
 P_{M_1} \ar[rr]^{\tilde{\psi}} 
\ar[d]_{\beta_{P_1}} 
 \ar@/_4pc/[dd]_{c_{\odot}} 
  && P_{M_2} 
\ar[d]^{\beta_{P_2}}
 \ar@/^4pc/[dd]^{c_{\odot}} 
 \\
 P_1 \ar[d]_{c_1} \ar[rr]^{\hat{\psi}} && P_2 \ar[d]^{c_2} \\
 M_1 
\ar[rr]^{\psi}
   && M_2 \,,
}$$
where $c_1,c_2$ are the cutting maps,
and where $\beta_{P_1}$ and $\beta_{P_2}$ are as in \Cref{natural2}.
By \Cref{natural2}, the left and right portions of the diagram commute,
so the big rectangle in the diagram commutes.
By properties of the cutting functor,
the bottom square of the diagram commutes.
The cutting maps $c_1$ and $c_2$ are not smooth,
but they are diffeomorphisms on open dense subsets.
The maps $c_{\odot}$ are also diffeomorphisms on open dense subsets.
This implies that the top square of the diagram also commutes,
which is what we needed to show.
\end{proof}

\section{Relations with the literature}
\labell{sec:literature}

The relevant literature is vast.
We will sample here only a small number of earlier results.

First, we comment on the relation between our classification and Wiemeler's.

\begin{remark} \labell{rk:wiemeler-details}
If $M$ is a locally standard $T$-manifold that \emph{admits a section}, 
in the sense that there exists a continuous map $M/T \to M$
that associates to each orbit an element of that orbit,
then its Chern class is zero.
It follows that our functor restricts to a functor
from the subcategory of those locally standard $T$-manifolds 
that admit sections
to the subcategory of those triples $(Q,\lambda,c)$ where $c=0$,
with the same morphisms as before,
and that this restricted functor is full.
In particular,
for any two locally standard $T$-manifolds $M$ and $M'$ that admit sections, 
if there is a diffeomorphism $M/T \to M'/T$
that intertwines the unimodular labellings,
then there is an equivariant diffeomorphism $M \to M'$.
This recovers a result of Wiemeler \cite[Theorem~1.1]{Wi:classification}.
Conversely, if $c=0$, then $M$ admits a section (see \Cref{rk:section}).
Thus, the restricted functor is essentially surjective.
Namely, each triple $(Q,\lambda,c)$ with $c=0$ 
arises from some locally standard
$T$-manifold $M$ that admits a continuous section.
\eor
\end{remark}

\begin{remark} \labell{rk:section}
(Wiemeler, in \cite[p.~550, l.~7]{Wi:classification}, writes 
``Note that there is always a section to the orbit map
$M \to M/T$ if $H^2(M/T,\Z)=0$''.
Our work yields a proof:
If $c=0$, then the simultaneous toric radial blowup $P_M$ of $M$ 
admits a section, which descends to a section of its cut space
$(P_M)_{\cut}$, which by \Cref{natural} gives a section of $M$.
We note that it is \emph{not} true 
that every continuous section of $\Mfree \to \Mfree/T$ 
extends to a continuous section of $M \to M/T$.)
\eor
\end{remark}

\begin{remark} \labell{rk:wiemeler}
Wiemeler's work relies on the notion of a \emph{regular section} 
$s \colon M/T \to M$ for a locally standard $T$-manifold $M$.
This notion is defined in \cite[Def.~3.1]{Wi:classification}.
A-priori, the definition depends on choices of locally standard $T$-charts.
A-posteriori, it doesn't. 
This is stated in Lemma~3.2 of \cite{Wi:classification},
but the proof of this lemma 
only shows that the definitions coincide for two locally standard $T$-charts
with the same locally standard $T$ model
and with the same isomorphism 
$\rho_{\alpha_1,\ldots,\alpha_d} \colon T \to \T^d$.
In particular, it only considers the case $n=n'$ and $m=m'$.
Our \Cref{descend diffeo from P to M} completes Wiemeler's argument
and confirms that Wiemeler's notion of a ``regular section''
is indeed independent of the choice of a locally standard $T$-chart.
\eor
\end{remark}

We also recover earlier results of Davis:

\begin{remark} \labell{rk:Davis}
Let $Y$ be a $\T^m$ manifold, modeled on the standard representation
in the sense that at each point the stabilizer 
and its action on the normal to the orbit-type stratum
occur among those of the standard $\T^m$ action on $\C^m$.
Let $Y_{\free}$ be the set of points where the $\T^m$ action is free;
assume that the principal bundle $Y_{\free}$ is trivial.
The quotient $P = Y/\T^m$
is an $n$-dimensional manifold-with-faces (\Cref{rk:mfld-w-faces}).
For $i=1,\ldots,m$, let $F_i$ be the union of those facets of $P$
whose preimage in $Y$ is fixed by the $i$th standard sub-circle of $\T^m$.
Let $f \colon P \to \Rplus^m$ be a smooth map such that $\{f_i=0\} = F_i$ 
and $df_i \neq 0$ along $F_i$ for all $i$
The {\it moment-angle manifold} $\mathcal{Z}_{P}$ 
is defined by the pull-back diagram:
\begin{align*}
\xymatrix{
\mathcal{Z}_{P}\ar[r]\ar[d] & \mathbb{C}^{m}\ar[d]^{(z_1,\ldots,z_m)
 \mapsto (|z_1|^2,\ldots,|z_m|^2)} \\
P\ar[r]^{f} & \mathbb{R}_{+}^{m}.
}
\end{align*}
Also see \cite{BP}.
According to Davis \cite[Proposition 6.2]{Da13}
(which relies on his former works \cite{Da78, Da78_note, Da81}),
$Y$ is equivariantly diffeomorphic to the moment-angle manifold 
$\mathcal{Z}_P$. 
This result also follows from our theorem.
Here, the Chern class is zero, and the unimodular labelling 
takes each point in the $i$th stratum
to $\pm e_i$
where $e_1,\ldots,e_m$ is the standard basis of 
the weight lattice $\mathfrak{t}_{\mathbb{Z}}\simeq \mathbb{Z}^{m}$.

As an application, Davis \cite[Proposition 6.4 (iii)]{Da13}
obtains unique-up-to-diffeomorphism smooth structures on quasitoric manifolds.
To construct a quasitoric manifold, following \cite{DJ},
we start with an $n$ dimensional torus $T$
and an $n$ dimensional simple convex polytope $P$
with $m$ facets $F_1,\ldots,F_m$.
To each facet $F_i$, we associate a (parametrized) sub-circle of $T$,
such that the resulting map 
$\lambdahat \colon \mathring{\del}^1 P \to \tZhat$
is a unimodular labelling in our sense.
Consider the homomorphism $\lambda \colon \T^m \to T$
whose restriction to the $i$th component is the corresponding sub-circle
of $T$. Then $H := \ker \lambda$ acts freely on ${\mathcal Z}_P$.
The corresponding quasi-toric manifold is ${\mathcal Z}_P/H$,
with its induced $T \cong \T^m/H$ action.
According to Davis \cite[Proposition 6.4 (iii)]{Da13},
if $M$ is a $2n$ dimensional locally standard $T$ manifold
and there exists a diffeomorphism 
$M/T \cong P$ that respects the unimodular labelling,
then $M$ is equivariantly diffeomorphic to the quasitoric manifold 
${\mathcal Z}_P/H$.
This result also follows from our theorem.
\eor
\end{remark} 

Inspired by Bredon, we obtain a relation between different smooth structures
on topological locally standard $T$-manifolds:

\begin{remark} \labell{rk:bredon}
Bredon works with compact groups $G$ that are not necessarily tori,
and restricts to so-called \emph{special} $G$-manifolds,
which have only two orbit types
and whose quotients $M/G$ are manifold-with-boundary.
According to his theorem \cite[Chapter VI, Theorem 6.3]{Br},
a smooth structure on $M/G$ determines a smooth structure on $M$,
unique up to equivariant diffeomorphism.

A \emph{topological locally standard $T$-manifold}
is a topological manifold $M$, equipped with an action of the torus $T$,
that can be covered by the domains of topological locally standard
$T$-charts (\Cref{def:Tchart v2}).
For a locally standard topological $T$-manifold $M$,
the quotient $Q := M/T$ is a topological manifold-with-boundary
(topologically we don't see the corners),
equipped with a Chern class $c \in H^2(Q;\tZ)$
and a unimodular labelling $\lambdahat \colon \mathring{\del}^1 Q \to \tZhat$
that are defined as in~\Cref{sec:defs},
where $\mathring{\del}^1 Q$ is the set of points in $Q$
whose preimages in $M$ have a one dimensional stabilizer.

Our classification implies the following uniqueness result.
Let $M_1$ and $M_2$ be two (smooth) locally standard $T$-manifolds,
with the same underlying  topological locally standard $T$-manifold $M$,
and that induce the same manifold-with-corners structure on $M/T$.
Then the identity map on $M/T$ lifts to an equivariant diffeomorphism
from $M_1$ to $M_2$.
\eor
\end{remark}

We conclude with a few more pointers to the literature.

\begin{remark} \ 
\begin{enumerate}
\item
Other authors --- for example, J\"anich \cite{janich},
Albin-Melrose \cite{AM}, and Davis \cite{Da78}
 --- have used \emph{iterated} radial blowups to describe compact group actions.
In contrast to iterated radial blowups,
our \emph{simultaneous} blowup does not affect the quotient space.
We are not aware of usages of a simultaneous radial blowup
of the form that we have in this paper.

\item
For a radial blowup along one toric divisor,
our construction agrees with J\"anich's \cite{janich}
as topological spaces but not as (smooth) manifolds.
See \cite{cutting}, where we used the terms
``radial blowup" and ``radial-squared blowups''
to distinguish the two smooth structures on the resulting spaces.

\item
In \Cref{sec:example} we obtained lens spaces 
as three dimensional locally standard $T$-manifolds.
Constructions of higher dimensional lens spaces
in the context of toric topology appeared in \cite{lens}. 

\item
Let $M$ be a compact connected orientable manifold $M$ 
with a faithful torus action.
Suppose that $\dim M = 2 \dim T$ and that the odd cohomology of $M$ 
(over $\Z$) vanishes.
Then $M$ is a locally standard $T$-manifold.
Moreover, for every subgroup $H$ of $T$,
every component $N$ of the set $N^H$ of $H$-fixed points
contains a $T$-fixed point.  See \cite[Theorem 4.1]{MP}.

\item
Related classification results in the symplectic context
appeared in these papers:
For torus actions:
\cite{De}, 
\cite{KL}. 
For completely integrable systems:
\cite{duist}. 
\cite{Mol}, 
\cite{zung}. 
Completely integrable systems can be viewed as singular Lagrangian 
torus fibrations $M \to B$.
Completely integrable systems with elliptic singularities
are examples of (smooth) locally standard local torus actions
(cf.~\cite{Yo}),
where the torus that acts on $M$ varies along $B$.  

\item
A classification result in a complex analytic context 
appeared in \cite{IK}.

\item
The following toric topology papers connect with our work in various ways
and provided inspiration:
\cite{HM}, 
\cite{Ku}, 
\cite{MP}, 
\cite{sarkar-song}, 
\cite{Yo}, 
as well as the earlier paper 
\cite{OR}. 

\item
Characteristic classes, such as Chern classes,
appeared as measurements of ``twistedness'' of $T$-spaces in 
\cite{duist}, \cite{HS}, \cite{zung}, 
\cite{KL}, \cite{Yo}, \cite{Mol}, \cite{Wi:classification}.

\end{enumerate}
\end{remark}

}

\end{document}